\subjclass{Primary: 57R91; Secondary: 57R85, 55N45}
\newtheorem{thm}{Theorem}[section]  
\newtheorem*{un-no-thm}{Theorem}
\newtheorem{cor}[thm]{Corollary}     
\newtheorem{lem}[thm]{Lemma}         
\newtheorem{prop}[thm]{Proposition}  
\newtheorem{add}[thm]{Addendum}
\newtheorem{conjecture}[thm]{Conjecture}
\newtheorem{bigthm}{Theorem}
\newtheorem{bigcor}[bigthm]{Corollary}
\newtheorem{bigadd}[bigthm]{Addendum}
\theoremstyle{definition}
\newtheorem{defn}[thm]{Definition}   
\theoremstyle{definition}
\theoremstyle{definition}
\theoremstyle{remark}
\newtheorem{rem}[thm]{Remark}
\newtheorem{rems}[thm]{Remarks}
\newtheorem*{ques}{Question}
\newtheorem*{intro-rem}{Remark}
\newtheorem*{intro-rems}{Remarks}
\newtheorem{ex}[thm]{Example}
\DeclareMathOperator{\cd}{cd}
\DeclareMathOperator*{\colim}{colim}
\DeclareMathOperator{\id}{id}
\DeclareMathOperator{\maps}{map}
\DeclareMathOperator{\RO}{RO}
\DeclareMathOperator{\interior}{int}
\DeclareMathOperator{\self}{end}
\DeclareMathOperator{\secs}{sec}
\DeclareMathOperator{\Sp}{Sp}
\begin{document}
\title{Homotopical Intersection Theory, II: equivariance}
\date{\today} 
\author{John R. Klein} 
\address{Wayne State University,
Detroit, MI 48202} 
\email{klein@math.wayne.edu} 
\author{Bruce Williams} 
\address{University of Notre Dame, Notre Dame, IN 46556}
\email{williams.4@nd.edu}
\begin{abstract} This paper is a sequel to \cite{klein-williams}.
We develop here an  intersection theory 
for manifolds equipped with an action of a finite group.
As in \cite{klein-williams}, our approach will be homotopy
theoretic, enabling us to circumvent the 
specter of equivariant transversality.

We give applications of our theory to embedding problems,
equivariant fixed point problems and the study of
periodic points of self maps.
\end{abstract}
\thanks{The first author is partially supported by the 
National Science Foundation}
\maketitle
\setlength{\parindent}{15pt}
\setlength{\parskip}{1pt plus 0pt minus 1pt}
\def\Top{\bold T\bold o \bold p}
\def\wTop{\text{\rm w}\bold T}
\def\wT{\text{\rm w}\bold T}
\def\vo{\varOmega}
\def\vs{\varSigma}
\def\smsh{\wedge}
\def\flush{\flushpar}
\def\dbslash{/\!\! /}
\def\:{\colon}
\def\Bbb{\mathbb}
\def\bold{\mathbf}
\def\cal{\mathcal}
\def\orb{\cal O}
\def\hoP{\text{\rm ho}P}

\setcounter{tocdepth}{1}
\tableofcontents
\addcontentsline{file}{sec_unit}{entry}

\section{Introduction \label{intro}}
\subsection*{Intersection problems}
Suppose $N$ is a compact smooth manifold equipped with a closed
submanifold $Q\subset N$. An {\it intersection problem} for $(N,Q)$
consists of a map $f\: P \to N$, where $P$ is a closed manifold. A
{\it solution} to the problem consists of a homotopy of $f$ to a map
$g$ satisfying $g(P) \cap Q = \emptyset$. We depict the situation by
$$
\SelectTips{cm}{}
\xymatrix{
& N - Q \ar[d] \\
P \ar[r]_f\ar@{..>}[ur]
& N\, ,
}
$$ in which we seek to find the dotted arrow making the diagram
homotopy commute. One also has a version of the above when
$P$ has a boundary whose image under $f$ is disjoint from $Q$. 
We then require the deformation of $f$ to hold the
boundary fixed. Let $i_Q\: Q\subset N$ be the inclusion.
We will often denote the  data by $(f,i_Q)$.

In \cite{klein-williams}, we produced an obstruction
$\chi(f)$ living in a certain bordism group whose vanishing is
necessary for the existence of a solution. Furthermore, the
obstruction was shown to be sufficient in the range $p \le 2n - 2q -
3$, where $\dim N = n, \dim Q = q$ and $\dim P = p$. We also gave a
version of the obstruction for families.
\medskip

Here, we will consider {\it equivariant} intersection problems. Suppose
$G$ is a finite group and the above manifolds are equipped with
smooth $G$-actions. 
\medskip

In the equivariant setting, $i_Q\:Q \subset N$ is a $G$-submanifold
and $f\: P \to N$ is an equivariant map. We now seek a deformation
of $f$ through $G$-maps to an 
equivariant map whose image is disjoint from $Q$.

The partial answers we will give to such questions
are phrased in terms of isotropy data.
If $X$ is a $G$-space, we let 
$$
{\cal I}(G;X)
$$ 
denote the conjugacy classes of subgroups of $G$ which appear
as stabilizer groups of points of $X$.

\subsection*{Indexing functions}
An {\it indexing function} $\phi_\bullet$ on a $G$-space $X$ assigns to a
subgroup $H\subset G$ a locally constant function $\phi_H$ with domain
$X^H$, the fixed point set of $H$ acting on $X$, and codomain given by
the extended integers $\Bbb Z\cup \pm\infty$.  It is also required to
be conjugation invariant: if $K = gHg^{-1}$ and $h\:X^H \to X^K$ is
the homeomorphism $x\mapsto gx$, then $\phi_H = \phi_K \circ h$.

If $\psi_\bullet$ is another indexing function on $X$,
and $H \subset G$ is a subgroup,
we write 
$$
\phi_H \le \psi_H
$$ 
if 
$\phi_H(x) \le \psi_H(x)$ for
all $x\in X^H$. If $\phi_H \le \psi_H$ for all $H$, then we write
$\phi_\bullet \le \psi_\bullet$.

Here are some examples:

\subsubsection*{Dimension}  If $M$ is a locally smooth $G$-manifold, then for
any subgroup $H \subset G$ the components of the fixed point set $M^H$
are manifolds \cite[Ch.\ 4]{Bredon}.  The dimensions of the components
can vary. If $x\in M^H$, then the dimension of the component
containing $x$ defines a locally constant function $m^H$. The
collection $m^\bullet := \{m^H\}_{H\subset G}$ is called the {\it
dimension function} of $M$.  If $M^H$ is empty, our convention is to
set $m^H = -\infty$.

\subsubsection*{Codimension}
Let $i_Q\:Q \subset N$ be as above.
Let $$\cd_\bullet(i_Q)$$ be the indexing function
on $Q$
in which $\cd_H(i_Q)(x)$ is the dimension of
the normal space
to the embedding $Q^H \subset N^H$ at $x\in Q^H$ (if $Q^H$ is empty but $N^H$ isn't, 
our convention is to set $\cd_H(i_Q) = +\infty$).


\subsubsection*{Pullback}
Suppose $f\:X\to Y$ is a $G$-map. 
Given an indexing function $\alpha_\bullet$ on $Y$, we
obtain an indexing function $f^*\alpha_\bullet$ on
$X$ which is given by $f^*\alpha_H(x) = \alpha_H(f(x))$.

\subsubsection*{Pushforward} Given $f\: X\to Y$ as above, 
 let $\beta_\bullet$ be an indexing function on $X$. 
 For $y \in Y$ we let $[y]$ denote the associated path component.
 Let $I_{f,y}$ be the set of those $[x]$ for which $[f(x)] = [y]$. That is,
 $I_{f,y}$ is the inverse image of $f_*\: \pi_0(X) \to \pi_0(Y)$ at $[y]$.
 
 Define
an indexing function $f_!\beta_\bullet$ on $Y$ by the rule
\[
f_!\beta_H(y) = 
\begin{cases} \inf_{I_{f,y}} \beta_H(x) \quad &\text{ if $I_{f,y}$ is nonempty, } \\
 \infty  \quad &\text{ otherwise.}
 \end{cases}
\]
Note $f_!f^*\alpha_\bullet \ge \alpha_\bullet$, with equality holding when
$f_*$ is a surjection, whereas $f^*f_!\beta_\bullet \le \beta_\bullet$
with equality holding when $f_*$ is an injection.

\subsection*{Stable intersections}
Just as in the unequivariant case,
equivariant intersections can be removed when the codimension is sufficiently
large.  The equivariant intersection problem $(f,i_Q)$
is said to be {\it stable} if 
$$
p^H \,\,  \le \,\,  f^*(i_Q)_!\cd_H(i_Q) - 1\,  
$$
for every $(H) \in {\cal I}(G;P)$ (Roughly, this means the dimension of the transverse intersection
of $f(P^H)$ and $Q^H$ is negative).

If the intersection problem is stable,
one can use
elementary equivariant obstruction theory to show $f$
equivariantly deforms off of $Q$, yielding a solution.

\subsection*{A ``cohomological'' result} 
Our first main result gives a complete obstruction
to solving equivariant intersection problems in the equivariant
{\it metastable} range. The obstruction lies
in the cohomology of $P$ with coefficients in a certain
{\it parametrized equivariant spectrum} over $N$.  A reader who is not
familiar with this technology should consult \S\ref{proof-cohom}.

\begin{bigthm} \label{cohom} To an equivariant intersection problem $(f,i_Q)$,
there is a naive parametrized $G$-spectrum ${\cal E}(i_Q)$ over 
$N$, which is constructed from the inclusion $i_Q \: Q\subset N$, 
and an obstruction
$$
e_G(f) \in H^0_G(P;{\cal E}(i_Q))
$$
which vanishes when the intersection problem has a solution.

Conversely, if $e_G(f)=0$ and
$$
p^H \le 2f^*(i_Q)_{!}\cd_H(i_Q)  - 3
$$
for all $(H) \in {\cal I}(G;P)$,
then the intersection problem has a solution.
\end{bigthm}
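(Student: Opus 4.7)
The plan is to build ${\cal E}(i_Q)$ as a parametrized equivariant analogue of the spectrum used in \cite{klein-williams}, and then to establish the converse stratum by stratum, using the non-equivariant theorem of \emph{loc.~cit.} as a black box on each orbit type.

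First, replace the inclusion $i_Q : Q \hookrightarrow N$ by an equivariant fibration and let ${\cal E}(i_Q)$ be the naive parametrized $G$-spectrum over $N$ whose fiber at $x$ is the stable homotopy cofiber (desuspended once) of $N \setminus Q \hookrightarrow N$ restricted to a small neighborhood of $x$. At a smooth point of a stratum this reproduces a Thom-spectrum model, and taking $H$-fixed points recovers the analogous data for the embedding $Q^H \subset N^H$. The obstruction $e_G(f) \in H^0_G(P; f^*{\cal E}(i_Q))$ is then the primary class associated to the pair $(f, i_Q)$: it measures the failure of $f$ to admit an equivariant stable lift to $N - Q$. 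If the intersection problem has an honest solution, $f$ lifts as a $G$-map through $N - Q$, so $e_G(f) = 0$; the forward implication is thus essentially formal once the universal properties of parametrized $G$-spectra developed in \S\ref{proof-cohom} are in place.

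For the converse, I would argue by downward induction on the isotropy stratification of $P$, starting with those orbit types in ${\cal I}(G;P)$ whose stabilizer class $(H)$ is maximal. For fixed $(H)$, the map $f^H : P^H \to N^H$ is an ordinary intersection problem for the pair $(N^H, Q^H)$, equivariant under the Weyl group $W_G(H) = N_G(H)/H$. The hypothesis $p^H \le 2 f^*(i_Q)_! \cd_H(i_Q) - 3$ is precisely the dimensional condition of \cite[Thm.\ A]{klein-williams} applied to this problem, and the $(H)$-component of $e_G(f)$ identifies with the primary obstruction there; its vanishing yields a deformation of $f^H$ off $Q^H$. Carrying out this deformation $W_G(H)$-equivariantly and relative to the strata already fixed, and then extending through a tubular neighborhood using equivariant tube theory, produces a $G$-equivariant deformation on this stratum.

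The main obstacle will be the extension step at each stage of the induction: once $f$ has been deformed off $Q$ on the union of strata with stabilizer strictly containing a conjugate of $H$, one must solve a \emph{relative} intersection problem on $P^H$ holding the prior deformation fixed near the higher strata. This requires a parametrized, relative form of \cite{klein-williams}, together with an identification of how the global class $e_G(f)$ restricts to the primary obstruction of the relative problem. I expect this to amount to building a tower for $H^0_G(P; f^*{\cal E}(i_Q))$ indexed by the orbit category, whose associated graded consists of Bredon-type cohomology groups on the individual strata, and showing via the metastable connectivity estimates that the higher layers vanish. With this in hand the induction closes and the stratum-wise deformations assemble into a global equivariant solution.
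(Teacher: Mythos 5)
Your construction of ${\cal E}(i_Q)$ and the forward implication are essentially what the paper does: replace $N-Q\to N$ by an equivariant fibration $E\to N$ and take the naive fiberwise suspension spectrum of $S_NE$; a genuine equivariant lift of $f$ through $N-Q$ then kills the section class. The converse is where your route diverges and where there is a genuine gap. Your downward induction over ${\cal I}(G;P)$ requires, at the stage indexed by $(H)$, the vanishing of a \emph{relative} obstruction class (the obstruction to deforming $f$ off $Q$ on the orbit-type-$(H)$ stratum rel the strata already treated). The hypothesis $e_G(f)=0$ only forces the image of this relative class in the corresponding absolute group to vanish; to conclude that the relative class itself vanishes you need injectivity of the comparison map between the relative and absolute groups --- exactly the role played by the hypothesis on $(t_H)_*$ in Theorems \ref{global-to-local} and \ref{descent}, which is \emph{not} assumed in Theorem \ref{cohom} --- or else control of the indeterminacy coming from the choice of null-homotopy on the previously handled strata. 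Your closing claim that ``the higher layers vanish'' by metastable connectivity is the crux and is not justified: the metastable inequalities govern the stable-versus-unstable comparison (Freudenthal and Blakers--Massey on each fixed-point set), not the layers of the orbit-category filtration. A secondary issue: the deformation supplied by the non-equivariant theorem of \cite{klein-williams} on $P^H$ is not $W(H)$-equivariant, and making it so is only straightforward on the free part $P_H$, which pushes you back to the relative problem in any case.

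The paper sidesteps all of this. It proves an equivariant Larmore-type statement (Proposition \ref{Larmore}) by applying Blakers--Massey to each fixed-point pushout square for $E^H\to B^H$, concluding that $E$ maps by a $(2r_\bullet+1)$-connected map to the homotopy pullback of $B\to S_BE\leftarrow B$; the vanishing of the single class $e_G(f)$ then produces a homotopy section of that pullback, which lifts to a section of $f^*E\to P$ by one application of the equivariant cell-by-cell lifting lemma (Lemma \ref{factorization}, Corollary \ref{factorize-homotopy}). Because the cellular dimension function of $P$ is bounded by the connectivity function in the metastable range, that lifting encounters no obstructions, so no stratification, no relative intersection theory, and no descent hypothesis are needed. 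To salvage your approach you would essentially have to prove Theorems \ref{local} and \ref{global-to-local} first, under hypotheses that Theorem \ref{cohom} does not grant you.
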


\begin{intro-rem} Theorem \ref{cohom} is an equivariant version
of \cite[{Cor.\ 3.5}]{klein-williams}.
The word {\it naive} is used here indicate that the parametrized
spectrum is indexed over a trivial universe; the
equivariant cohomology theory of the theorem 
is therefore of ``Bredon type.''

The inequalities of Theorem 
 \ref{cohom} define the {\it equivariant metastable range.} 
When $G$ is the trivial group, one has 
the sole inequality $2p \le 2n -2q-3$, which is just
the unequivariant metastable range (cf.\ \cite{klein-williams}). 
\end{intro-rem}

\subsection*{Homotopical equivariant bordism}
Since naive equivariant cohomology theories are not indexed 
over representations, they are not fully ``stable.'' 
From our viewpoint, a crucial deficiency 
of naive theories is their lack of Poincar\'e duality.

To get around this,
we impose additional conditions to get a more
tractible invariant residing in a theory
which does possess Poincar\'e duality. We will map the  
equivariant cohomology theory of Theorem \ref{cohom}
into a similarly defined $\RO(G)$-graded one. 
The additional constraints will insure the map is injective.
Applying duality to our pushed-forward invariant, we obtain
another invariant living in $\RO(G)$-graded homology.
We then identify the homology theory with the
homotopical $G$-bordism groups of a certain $G$-space.

To a $G$-space $X$ equipped with real $G$-vector bundle $\xi$,
one has an associated {\it equivariant Thom spectrum}
$$
X^\xi \, ,
$$ 
whose spaces $X^\xi_V$ are indexed by representations $V$ ranging over a
complete $G$-universe ${\cal U}$ (compare \cite[Chap.\
XV]{May}). Here, $X^\xi_V$ denotes the Thom space of $\xi \oplus
V$. Equivalently, $X^\xi$ is the equivariant suspension spectrum
of the Thom space of $\xi$. More generally, $X^\xi$ 
is defined whenever $\xi$ is an {\it virtual} $G$-bundle over $X$
(see \cite[Ch.\ 9]{LMS} for details).

For a virtual $G$-representation $\alpha = V-W$, the {\it homotopical $G$-bordism group}
of $(X,\xi)$ in degree $\alpha$ is given by
$$
\Omega^G_\alpha(X;\xi) \,\,  :=  \,\, 
\colim_{U} 
  [S^{V+U},X^\xi_{W+U}]^G \, ,
$$ 
where $[S^{V+U},X^\xi_{W+U}]^G$ denotes the homotopy classes of based
$G$-maps $S^{V+U} \to X^\xi_{W+U}$ (in which $S^{V+U}$ is 
the one point compactification of the direct sum of $V$ and $U$),
and the colimit is indexed over the finite dimensional
subrepresentations $U$ of ${\cal U}$ using the partial ordering 
defined by inclusion. Actually, we will only need 
consider the case when $\alpha = 0$ is the trivial representation of rank zero.

\begin{intro-rems}(1).
There is a related object, ${\cal N}^G_\alpha(X;\xi)$, called the {\it
geometric bordism group} of $(X,\xi)$. It is generated by
$G$-manifolds $M$ equipped with $G$-map $u\:M \to X$ and a stable
$G$-bundle isomorphism $$u^*\xi\oplus \tau_M \oplus \epsilon_W \,\, \cong \,\,\epsilon_V\, ,$$ 
where $\epsilon_V$ denotes the $G$-bundle whose total space is 
$X \times V$.

The Pontryagin-Thom construction defines a homomorphism 
$$
{\cal N}^G_\alpha(X;\xi) \to \Omega^G_\alpha(X;\xi)\, . 
$$ 
In constrast with the
unequivariant case, this map can fail to be an isomorphism because of
the lack of equivariant transversality (see \cite{Petrie}, 
\cite{CW}, \cite[{p.\ 156}]{May}).
\smallskip

{\flushleft (2).} When $\xi$ is a $G$-vector bundle (not virtual),
then $X^\xi$ is the equivariant suspension spectrum of the Thom space
of $\xi$. In particular, when $\xi$ is trivial of rank zero, we get
$\Sigma^\infty_G (X_+)$, the equivariant suspension spectrum of 
$X\amalg *$.  In this case, the map from the
equivariant geometric bordism group to the homotopical one is an
isomorphism \cite{Hauschild}, \cite{Kos}.  The $k$-th homotopy group of 
$\Sigma^\infty_G (X_+)$
coincides with $\Omega^{G,\text{fr}}_k(X)$, the $k$-dimensional
equivariant framed bordism group of $X$.
\smallskip

{\flushleft (3).} When $G = e$ is the trivial group, and $\xi$ has virtual
rank $n$, $\Omega^e_0(X;\xi) = \Omega_0(X;\xi)$ is the bordism group generated by maps
$\alpha\:M \to X$, with $M$ a compact $n$-manifold, together with a
(stable) isomorphism $\alpha^*\xi$ with the stable normal bundle of
$M$. Note the indexing convention used here is different from the one
of \cite{klein-williams} (the latter implicitly ignored the rank of $\xi$
but indicated the dimension of the manifolds in the degree of the bordism group; thus 
the group $\Omega_n(X;\xi)$ of \cite{klein-williams} coincides with
the current $\Omega_0(X;\xi)$).
\end{intro-rems}
\medskip

We now specialize to the equivariant bordism groups
arising from intersection problems.  Given an
equivariant intersection problem $(f,i_Q)$, define
$$
E(f,i_Q)
$$ 
to be the {\it homotopy fiber product} (a.k.a.\ homotopy pullback)
of $f$ and $i_Q$. A point in $E(f,i_Q)$ is a triple $(x,\lambda,y)$ in
which $x\in P$, $y \in Q$ and $\lambda\: [0,1]\to N$ is a path such
that $\lambda(0) = f(x)$ and $\lambda(1) = y$. There is an evident
action of $G$ on $E(f,i_Q)$.

There are forgetful maps $j_P\:E(f,i_Q) \to P$ and $j_Q\: E(f,i_Q) \to Q$,
both equivariant. There is also an equivariant
map $j_N \: E(f,i_Q) \to N$ given by $(x,\lambda,y) \mapsto \lambda(1/2)$.
Using these, we obtain an equivariant virtual bundle over $E(f,i_Q)$ by
$$
\xi  \,\,  :=  \,\, j_N^*\tau_N - j_Q^*\tau_Q - j_P^*\tau_P\, .
$$
If $Q\subset N$ is held fixed, then $\xi$ is completely determined
by $f\:P \to N$.


\subsection*{A ``homological'' result} 

\begin{bigthm} \label{equi-int} 
Given an equivariant intersection problem $(f,i_Q)$, there is
an invariant
$$ 
\chi_G(f) \in \Omega^G_0(E(f,i_Q);\xi)
$$
which vanishes when $f$ is equivariantly
homotopic to a map whose image is disjoint from $Q$.

Conversely, assume $\chi_G(f) = 0$ and 
\begin{itemize}
\item for each
$(H) \in {\cal I}(G;P)$, we have
$$
p^H \le 2f^*(i_Q)_{!}\cd_H(i_Q)  - 2 \, ;
$$ 
\item for each $(H) \in {\cal I}(G;P)$ and each proper subgroup
$K \subsetneq H$, we have
$$
p^H \le f^*(i_Q)_{!}\cd_K(i_Q) - 2\, .
$$
\end{itemize}
Then $f$ is equivariantly homotopic to a map whose image is disjoint
from $Q$.
\end{bigthm}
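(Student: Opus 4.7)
The plan is to follow the roadmap sketched after Theorem~\ref{cohom}. Starting from the cohomological obstruction $e_G(f) \in H^0_G(P;\mathcal{E}(i_Q))$ produced by that theorem, I will build $\chi_G(f)$ by pushing $e_G(f)$ forward into an $RO(G)$-graded version of the theory, applying equivariant Poincar\'e duality on $P$, and identifying the target homology group with $\Omega^G_0(E(f,i_Q);\xi)$. Necessity will then be formal, while sufficiency reduces to showing that the two numerical hypotheses force the push-forward to be injective on the stratum-wise obstructions.

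\textbf{Construction of $\chi_G(f)$.} The naive parametrized $G$-spectrum $\mathcal{E}(i_Q)$ over $N$ admits a canonical stabilization $\widetilde{\mathcal{E}}(i_Q)$ indexed over the complete universe ${\cal U}$, giving a map of parametrized equivariant cohomology theories. I push $e_G(f)$ forward to obtain an $RO(G)$-graded class on $P$. Equivariant Poincar\'e--Atiyah duality on the closed $G$-manifold $P$ then trades this class for one in $RO(G)$-graded homology twisted by $-\tau_P$. A fibrewise inspection identifies $\widetilde{\mathcal{E}}(i_Q)$ with the equivariant sphere spectrum of the normal bundle of $i_Q$, and combining the resulting Thom isomorphism with the duality twist on $P$ produces exactly the virtual bundle $\xi = j_N^*\tau_N - j_Q^*\tau_Q - j_P^*\tau_P$ and identifies the resulting homology group with $\Omega^G_0(E(f,i_Q);\xi)$. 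I define $\chi_G(f)$ to be the image of $e_G(f)$ under this chain.

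\textbf{Necessity.} If $f$ is $G$-homotopic to a map missing $Q$, Theorem~\ref{cohom} gives $e_G(f) = 0$, and naturality of every step above yields $\chi_G(f) = 0$.

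\textbf{Sufficiency and the main obstacle.} Suppose $\chi_G(f) = 0$ together with the two inequalities. One cannot simply reverse arrows and cite Theorem~\ref{cohom}, because the first inequality here, $p^H \le 2f^*(i_Q)_{!}\cd_H(i_Q)-2$, is strictly weaker than the metastable range $p^H \le 2f^*(i_Q)_{!}\cd_H(i_Q)-3$ required there. Instead, I run equivariant obstruction theory directly on a $G$-CW structure for $P$, inducting along the isotropy lattice. On cells of orbit type $G/H \times D^k$ the obstruction to extending a partial equivariant deformation lies in a Bredon-type cohomology group whose coefficients are controlled by the homotopy of $(N^H, N^H - Q^H)$; the first inequality places the top-stratum obstruction in the (equivariant) Freudenthal/metastable range, so that the naive-to-genuine comparison is an isomorphism there, while the second inequality $p^H \le f^*(i_Q)_{!}\cd_K(i_Q) - 2$ for each proper $K \subsetneq H$ is exactly what is needed to keep the Wirthm\"uller-type contributions from lower strata in that range as well. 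Together they force the $RO(G)$-graded push-forward to detect every stratum-wise obstruction, so vanishing of $\chi_G(f)$ successively kills them. The principal technical obstacle will be to establish these equivariant Blakers--Massey/Freudenthal estimates in precisely the stated ranges and to assemble the stratum-wise null-homotopies into a single coherent $G$-equivariant deformation, carefully tracking the Wirthm\"uller transfers that relate consecutive strata; this is the step where the tightness of the inequalities really matters, since anything weaker would prevent the inductive step from propagating to the top stratum.
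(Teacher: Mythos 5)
Your construction of $\chi_G(f)$ and the necessity argument coincide with the paper's: push the naive class $e_G(f)$ of Theorem \ref{cohom} into the $\RO(G)$-graded theory indexed on a complete universe, identify the coefficient spectrum via the complement formula (Corollary \ref{comp-form}) with the fiberwise Thom spectrum of the normal bundle of $i_Q$, and apply fiberwise Poincar\'e duality (Theorem \ref{duality}) to land in $\Omega^G_0(E(f,i_Q);\xi)$. That much is fine.

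The gap is in your sufficiency argument. The paper's route is a clean two-step reduction: (i) the \emph{second} inequality (the gap condition) alone forces the naive-to-genuine comparison $Q_P S_P f^*E \to Q_P^G S_P f^*E$ to be $s_\bullet$-connected with $s_H = \inf_{K\subsetneq H} f^*(i_Q)_!\cd_K(i_Q) - 1$ --- this is Proposition \ref{fib-equi-versus-naive}, whose proof is the tom Dieck splitting, in which the error terms are indexed by \emph{proper} subgroups $K \subsetneq H$ only --- so the map on $H^0_G(P;-)$ is injective and $\chi_G(f)=0$ forces $e(\id_P,f^*E)=0$; (ii) one then quotes Theorem \ref{cohom} (i.e.\ Proposition \ref{Larmore}, fiberwise Freudenthal plus Blakers--Massey) to destabilize the vanishing naive class to an actual section. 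You have partially conflated these two steps: the first inequality plays no role in making the naive-to-genuine comparison an isomorphism on any stratum; its only job is the destabilization in step (ii). More seriously, having (correctly) observed that the stated first inequality $p^H \le 2f^*(i_Q)_!\cd_H(i_Q)-2$ is one weaker than the range $-3$ demanded by Theorem \ref{cohom}, you abandon the reduction entirely and propose a fresh cell-by-cell equivariant obstruction theory with ``Wirthm\"uller-type'' corrections, and then explicitly defer the required Blakers--Massey estimates and the assembly of the stratum-wise null-homotopies as ``the principal technical obstacle.'' That deferred step \emph{is} the proof; as written, nothing establishes sufficiency. For what it is worth, the paper itself resolves the discrepancy simply by citing Theorem \ref{cohom} (so its argument actually delivers the conclusion only in the range $p^H \le 2f^*(i_Q)_!\cd_H(i_Q)-3$, and the $-2$ in the statement appears to be a slip --- compare the $-3$ appearing in Theorem \ref{local} and Corollary \ref{descent}); the honest fix is to prove the theorem with $-3$ via the two-step reduction above, not to build an uncarried-out alternative machine to rescue the extra dimension.
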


\begin{intro-rems} (1). The assignment
$
f\mapsto \chi_G(f)
$ 
is a {\it global section} of a locally constant sheaf over the
equivariant mapping space $\maps(P,N)^G$.  The stalk of this sheaf at
$f$ is $\Omega^G_0(E(f,i_Q);\xi)$.  This explains the sense in which
$\chi_G(f)$ is an invariant: an equivariant homotopy from $f$ to another map
$f'\: P \to N$ gives rise to an isomorphism of stalks over $f$ and $f'$, and the
isomorphism transfers $\chi_G(f)$ to $\chi_G(f')$.
\smallskip

{\flushleft (2).} The second set of inequalities of Theorem \ref{equi-int}
can be regarded as a {\it gap condition}.
\smallskip

{\flushleft (3).} An advantage that Theorem \ref{equi-int} enjoys
over Theorem \ref{cohom} is that the obstruction group appearing in the former is
is defined directly in terms of the maps $f\: P \to N$ and $Q \to N$. 
It turns out that obstruction group of Theorem \ref{cohom} is defined in terms of $f$ the map $N-Q\to N$, which is not as easy to identify in terms of the input data. Furthermore, 
the equivariant bordism group appearing in 
Theorem \ref{equi-int} arises from a Thom spectrum indexed over a complete universe, 
so more machinery is at hand for the purpose of making calculations (see \cite{May}).
\end{intro-rems}

\subsection*{Boundary conditions}  
There is also a version of
Theorem \ref{equi-int} when $N$ is compact, possibly with boundary, and
$P$ is compact with boundary $\partial P\ne \emptyset$ satisfying
$f(\partial P) \cap Q = \emptyset$.
In this instance one seeks an equivariant deformation of $f$, fixed on 
$\partial P$, to a new map whose image is disjoint from $Q$. 

\begin{bigadd} \label{boundary} Theorem $\ref{equi-int}$ also holds
when $P$ and $N$ are compact manifolds with boundary, where it is assumed
$f(\partial P) \cap Q = \emptyset$ and $Q$ is embedded in the interior of $N$.
\end{bigadd}

\subsection*{Sparse isotropy}
When the action of $G$ on $P$ has few isotropy types, the inequalities
in Theorem \ref{equi-int} unravel somewhat.

\subsubsection*{Free actions}
Suppose the action of $G$ on $P$ is free.
Then the trivial group is the only isotropy group and the inequalities
of  Theorem \ref{equi-int} reduce to a single inequality
$$
p \le 2(n-q) - 3 = 2n-2q-3\, .
$$
Furthermore, the
equivariant bordism group of Theorem \ref{equi-int} is isomorphic
to the unequivariant bordism group
$$
\Omega_0(EG \times_G E(f,i_Q);\id_{EG}\times_G\xi)\, ,
$$
where $EG \times_G E(f,i_Q)$ is the Borel construction. 
This bordism group is generated by maps $u\:M \to EG \times_G E(f,i_Q)$
together with a stable isomorphism $\nu_M \cong u^*(\id_{EG}\times_G\xi)$,
where $M$ has dimension $p+q-n$ and $\nu_M$ denotes the stable normal bundle.
The identification of these groups is obtained using a transfer construction 
(we omit the details).

\subsubsection*{Trivial actions}
If $P$ has a trivial $G$-action, then the only isotropy group is
$G$. In this instance, $P$ has image in $N^G$ and the intersection
problem becomes an unequivariant one, involving the map $f\:P\to N^G$
and the submanifold $Q^G \subset N^G$.  Assume for simplicity that
$N^G$ and $Q^G$ are connected.  Then by \cite{klein-williams}, the
intersection problem admits a solution when $\chi(f) \in
\Omega_0(E(f,i_{Q^G});\xi)$ is trivial and $p \le 2n^G -2q^G -3$.

\subsubsection*{Prime order groups} Let $G$ be a cyclic
group of prime order. By the above, we can assume both the trivial group
and $G$ appear as stabilizer groups. Then $\emptyset \neq P^G \subsetneq P$.

For simplicity, assume $Q^G$ and $N^G$ are connected.
Then the first set of inequalities of Theorem \ref{equi-int} becomes
$$
p \le 2n-2q-3\, , \qquad p^G \le 2n^G-2q^G -3\, ,
$$
and the second set amounts to the single inequality
$$
p^G \le n-q-2\, .
$$

\subsection*{Local intersection theory}
Suppose an equivariant intersection problem $(f,i_Q)$ has been
partially solved in the following sense: there is $G$-subspace $U
\subset P$ such that $f(U)$ is disjoint from $Q$.  One can then ask
whether the solution extends to a larger subspace of $P$.  
A {\it local equivariant intersection problem} amounts to these data.
A systematic approach to such questions provided by 
the isotropy stratification of $P$.

\subsubsection*{The isotropy stratification}
The relation of {\it subconjugacy} describes a partial ordering ${\cal
I}(G;P)$: we will write
$$
(H) < (K)
$$ 
if $K$ is properly subconjugate to $H$.  We then choose a total ordering
which is compatible with the partial ordering.  Let
$$
(H_1) < (H_{2}) < \cdots <(H_\ell)
$$
be the maximal chain coming from the total ordering of ${\cal I}(G;P)$.

Let $P_i\subset P$ be the set of points $x$ having stabilizer group
$G_x$ in which $(G_x) \le (H_i)$.  Then we have a filtration of
$G$-spaces
$$
\emptyset = P_0 \subsetneq P_1 \subsetneq P_2 \subsetneq \cdots\subsetneq  P_\ell = P \, ,
$$
where each inclusion $P_i \subset P_{i+1}$ possesses the equivariant 
homotopy extension property (cf.\ \cite{Davis}, \cite{Illman}).

\subsubsection*{The local obstruction} 
Suppose $(f,i_Q)$ is an equivariant intersection problem 
with $f(P_{i-1}) \cap Q = \emptyset$ for some $i \ge 1$. We seek a
deformation of $f$ relative to $P_{i-1}$ to a new map $f'$ such that
$f'(P_i) \cap Q = \emptyset$. The map $f'$ is then a
solution to the local problem.

Let $H$ be a representative of $(H_i)$ and let $f_H \: P_H \to N$ denote
the restriction of $f$ to $P_H$.  The Weyl group $W(H) = N(H)/H$ acts
on $P^H$ and freely on $P_H$.  Let ${}_{H}\xi$ be the virtual $W(H)$-bundle
over $E(f_H,i_Q)$ defined by 
$$
j_N^*\tau_N - j_Q^*\tau_Q - j_{P_H}^*\tau_{P_H}\, .
$$

\begin{bigthm} \label{local} There is an invariant
$$
\chi^i_G(f) \in \Omega_0^{W(H)}(E(f_H,i_Q);{}_{H}\xi)) 
$$ 
which is trivial when the local problem at $P_i$ relative to
$P_{i-1}$ can be solved.

Conversely, assume $\chi^i_G(f) = 0$ and
$$
p^H \le 2f^*(i_Q)_{!}\cd_H(i_Q)  - 3 
$$ 
for $(H)= (H_i)$. Then 
the local problem admits a solution. 
\end{bigthm}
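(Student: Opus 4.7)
The strategy is to reduce the local problem at stratum $P_i$ relative to $P_{i-1}$ to a single $W(H)$-equivariant intersection problem on $P^H$ whose action is free away from the boundary, and then to invoke Addendum \ref{boundary} specialized to that free-action situation. For the reduction, the stratum $P_i - P_{i-1}$ consists entirely of orbits of type $G/H$, so the identification $P_i - P_{i-1} \cong G \times_{N(H)} P_H$, combined with the equivariant homotopy extension property of $P_{i-1} \subset P_i$, shows that a $G$-equivariant deformation of $f|_{P_i}$ fixed on $P_{i-1}$ is equivalent data to a $W(H)$-equivariant deformation of $f_H \colon P^H \to N^H$ fixed on $P^H \cap P_{i-1}$. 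Moreover, any point of $P^H$ whose stabilizer properly contains $H$ lies in $P_{i-1}$, so the $W(H)$-action on the complement $P_H = P^H - (P^H \cap P_{i-1})$ is free, while all the non-free $W(H)$-points of $P^H$ sit in the locus where $f$ already misses $Q$.

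Applying Addendum \ref{boundary} to this $W(H)$-equivariant problem, with $(P^H, P^H \cap P_{i-1})$ playing the role of $(P, \partial P)$ and the submanifold being $Q^H \subset N^H$, produces an obstruction in $\Omega_0^{W(H)}(E(f_H, i_Q); {}_H\xi)$, which is exactly the group in which $\chi^i_G(f)$ is declared to live. Because only the trivial subgroup arises as isotropy for the free $W(H)$-action on the interior, the two families of inequalities in Theorem \ref{equi-int} collapse to the single bound
\[
\dim P_H \le 2 f^*(i_Q)_!\cd_H(i_Q) - 3,
\]
which is precisely the hypothesis of the theorem (with $\dim P_H = p^H$). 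Vanishing of the obstruction then yields a $W(H)$-equivariant deformation, which the reduction transports back to a $G$-equivariant deformation solving the local problem. The necessity of $\chi^i_G(f)$ is inherited directly from Theorem \ref{equi-int}, since one defines $\chi^i_G(f)$ to be the obstruction produced by the reduced problem.

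The main obstacle is carrying out this reduction with enough care. One must verify that $G$-equivariant deformations of $f$ on the pair $(P_i, P_{i-1})$ correspond bijectively (up to equivariant homotopy) to $W(H)$-equivariant deformations of $f_H$ on $(P^H, P^H \cap P_{i-1})$; this relies on a tubular neighborhood of $P_i - P_{i-1}$ coming from the slice theorem together with equivariant HEP. One also has to check that the obstruction so defined is independent of auxiliary choices and natural under equivariant homotopies of $f$ (rel $P_{i-1}$), and that the virtual bundle ${}_H\xi$ of the statement agrees with the virtual $W(H)$-bundle obtained by applying Theorem \ref{equi-int} directly to the pair $(f_H, i_Q)$. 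These are all bookkeeping verifications in the spirit of the constructions already in force, and do not introduce new essential difficulties beyond those handled by Theorem \ref{equi-int} and Addendum \ref{boundary}.
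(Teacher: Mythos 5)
Your overall strategy --- exploit the freeness of the $W(H)$-action on the open stratum, compactify it to a manifold with boundary, and feed the result into Addendum \ref{boundary} --- is the same idea that drives the paper's argument (the compactification is Lemma \ref{compactify}, and freeness is exactly what makes the paper's comparison map $\ell_1'$ an isomorphism). But there is a genuine gap at the step you set aside as bookkeeping: the identification of the obstruction group. The reduced problem to which Addendum \ref{boundary} can actually be applied $W(H)$-equivariantly is the pair $(f^H\colon P^H\to N^H,\ Q^H\subset N^H)$, since $W(H)=N(H)/H$ acts on $N^H$ and $Q^H$ but not on $N$ or $Q$. Running the machine on that problem produces an invariant in $\Omega_0^{W(H)}(E'';\xi'')$, where $E''$ is the homotopy pullback of $P_H\to N^H\leftarrow Q^H$ and $\xi''$ is $\tau_{N^H}-\tau_{Q^H}-\tau_{P_H}$ pulled back to $E''$. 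This is not the group of the statement: $E(f_H,i_Q)$ is the homotopy pullback over $N$ (paths in $N$ ending anywhere in $Q$), and ${}_H\xi$ involves $\tau_N-\tau_Q$, of virtual rank $n-q$, not $\tau_{N^H}-\tau_{Q^H}$, of rank $n^H-q^H$. Since the action is free, the two groups are bordism groups of Borel constructions of manifolds of dimensions $p^H+q-n$ and $p^H+q^H-n^H$ respectively; these differ in general, and the pullback of ${}_H\xi$ to $E''$ differs from $\xi''$ by $\nu(N^H\subset N)-\nu(Q^H\subset Q)$, so there is not even an obvious map of Thom spectra relating them. (The alternative of keeping $N$ as the target forces you to work $N(H)$-equivariantly, where every point of $\bar P_H$ has isotropy $H$; then the second family of inequalities of Theorem \ref{equi-int} for proper subgroups $K\subsetneq H$ is no longer vacuous, and you do not recover Theorem \ref{local} under its single stated hypothesis.)

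The paper avoids this by never changing the ambient manifold. It defines a relative naive class $e^i_G(f)\in[P_i/\!\!/P_{i-1},Q_NS_NE]_{R(N;G)}$ with coefficients in the parametrized spectrum over $N$, proves the solvability criterion for this class by the argument of Theorem \ref{cohom}, and then observes (Step 1 of \S\ref{proof-local}) that because $\bar P_H/\!\!/\partial\bar P_H$ is $W(H)$-free away from the section and $Q_NS_NE\to Q_N^GS_NE$ is a nonequivariant weak equivalence, the passage $\ell_1'$ from the naive theory to the $\RO(G)$-graded one is an isomorphism with no dimension hypothesis. Only then is Poincar\'e duality applied over $\bar P_H$, with coefficients $f_H^*$ of the genuine stabilization of $S_NE\simeq T_N(\nu)$ over $N$; that is what makes $E(f_H,i_Q)$ and ${}_H\xi$ come out as stated and what makes $\chi^i_G(f)$ comparable with the global invariant in Theorem \ref{global-to-local}. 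Your solvability conclusion is essentially right (the codimension of $Q^H$ in $N^H$ is by definition $\cd_H(i_Q)$, so your single inequality is the correct one), but the invariant your construction produces does not live in the group asserted by the theorem.
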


\subsubsection*{Descent} The global invariant $\chi_G(f)$ 
is an assemblage of all the local invariants.  
Although the local invariants may contain more information, 
they can fail to provide a solution to the global
question.  To address this point, we will give criteria for deciding
when the vanishing of the global invariants implies the vanishing of
the local ones.  In combination with Theorem \ref{local} the criteria
yield a kind of {\it descent} theory for equivariant intersection
problems.

Let $H \in {\cal I}(G;P)$ be and consider the inclusion
$$
P_{H} \subset P^{H}.
$$
The corresponding inclusion
$E(P_H,Q) \subset E(P^H,Q)$ by $t_H$.
The map $f^H\: P^H \to N$ will denote the restriction
of $f$ to $P^H$. Define a virtual $W(H)$-bundle ${}^H\xi$ over
$E(f^{H},i_Q)$ by $j_N^*\tau_N - j_Q^*\tau_Q - j_{P^H}^*\tau_{P^H}$.
Since the pullback of ${}^H\!\xi$ along $t_H$ is ${}_H\xi$,
we get an induced homomorphism
$$ 
(t_H)_*\: \Omega^{W(H)}_0(E(f_{H},i_Q);{}_H\xi) \to 
\Omega^{W(H)}_0(E(f^{H},i_Q);{}^H\!\xi) \, .
$$

\begin{bigthm}[``Global-to-Local''] \label{global-to-local} Assume
\begin{itemize}
\item 
$f(P_{i-1}) \cap Q = \emptyset$ 
for some $i \ge 1$ (so $\chi_G^i(f)$ is defined).
\item $(t_H)_*$ is  injective for  $(H)= (H_i)$.
\end{itemize}
Then $\chi_G(f) = 0$ implies 
$\chi^i_G(f)= 0$. 
\end{bigthm}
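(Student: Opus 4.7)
The plan is to compare the global invariant and the local invariant after restricting both to the single ambient receptacle $\Omega^{W(H)}_0(E(f^H,i_Q);{}^H\xi)$, and then to use injectivity of $(t_H)_*$. Concretely, for $(H) = (H_i)$ I would first construct a restriction homomorphism
\[
r_H \: \Omega^G_0(E(f,i_Q);\xi) \longrightarrow \Omega^{W(H)}_0(E(f^H,i_Q);{}^H\xi)
\]
arising from passage to $H$-fixed points. On the level of Thom spectra, $H$-fixed points send the equivariant Thom spectrum $E(f,i_Q)^\xi$ (indexed over a complete $G$-universe) to the $W(H)$-equivariant Thom spectrum over $E(f^H,i_Q)$ whose twisting bundle is the $H$-fixed part of $\xi$; identifying $\tau_N^H = \tau_{N^H}$, $\tau_Q^H = \tau_{Q^H}$, and $\tau_P^H|_{P^H} = \tau_{P^H}$ shows the fixed-part bundle coincides with ${}^H\xi$. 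Naturality of fixed-point passage turns this into the required homomorphism $r_H$.

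Next I would verify the compatibility
\[
r_H(\chi_G(f)) \,=\, (t_H)_*\bigl(\chi^i_G(f)\bigr)
\]
in $\Omega^{W(H)}_0(E(f^H,i_Q);{}^H\xi)$. Both invariants are defined via Pontryagin--Thom style constructions applied to (a $G$-perturbation of) $f$: the global $\chi_G(f)$ is the section of the sheaf over $\maps(P,N)^G$ obtained from the equivariant collapse, whereas $\chi^i_G(f)$ records the local contribution coming from the stratum $P_H \subset P^H$. Taking $H$-fixed points of the representative used to define $\chi_G(f)$ yields a $W(H)$-equivariant intersection datum on $P^H$. The hypothesis $f(P_{i-1})\cap Q = \emptyset$ implies that strata below $(H_i)$ contribute nothing to $r_H(\chi_G(f))$, so only the $P_H$-stratum survives, and by the very definition of $\chi^i_G(f)$ this surviving contribution, viewed inside $E(f^H,i_Q)$ via $t_H$, is precisely $(t_H)_*\chi^i_G(f)$.

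Once the displayed identity is established, the theorem is immediate: if $\chi_G(f) = 0$, then $(t_H)_*(\chi^i_G(f)) = r_H(0) = 0$, and the assumed injectivity of $(t_H)_*$ forces $\chi^i_G(f) = 0$.

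The main obstacle is the second step, namely giving a clean proof that the fixed-point restriction of the global Pontryagin--Thom class coincides with the push-forward of the local one. This requires being careful that (i) a representative $G$-map for $\chi_G(f)$ can be chosen whose $H$-fixed-point behavior models the construction of $\chi^i_G(f)$ on the stratum $P_H$, and (ii) on $P^H \setminus P_H$ (whose points have strictly larger stabilizer, hence lie in $P_{i-1}$) the representative is already disjoint from $Q$, so contributes trivially to $r_H(\chi_G(f))$. Both facts should follow from equivariant general position on the filtration, using the hypothesis $f(P_{i-1})\cap Q = \emptyset$ and the homotopy-extension property of the isotropy filtration noted earlier.
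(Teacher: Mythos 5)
Your overall skeleton --- find a map $r_H$ into $\Omega^{W(H)}_0(E(f^H,i_Q);{}^H\xi)$ with $r_H(\chi_G(f)) = (t_H)_*(\chi^i_G(f))$ and then invoke injectivity --- is the Poincar\'e dual of what the paper actually does, but the way you propose to construct $r_H$ and, more seriously, to verify the compatibility identity contains a genuine gap. First, a technical point: categorical $H$-fixed points of an equivariant Thom spectrum do \emph{not} yield the $W(H)$-Thom spectrum of the fixed-part bundle over the fixed-point space; by the tom Dieck splitting they decompose into a sum of summands indexed by conjugacy classes of subgroups of $H$, of which the one you describe is only a single factor (geometric fixed points would behave as you claim, but $\Omega^G_0$ is defined via equivariant homotopy classes, i.e., categorical fixed points). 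Second, and this is the crux: your verification of $r_H(\chi_G(f)) = (t_H)_*(\chi^i_G(f))$ appeals to ``Pontryagin--Thom style constructions applied to a $G$-perturbation of $f$'' and ``equivariant general position on the filtration.'' But $\chi_G(f)$ and $\chi^i_G(f)$ are \emph{not} defined geometrically --- they are defined homotopy-theoretically as Poincar\'e duals of the cohomological classes $e(\id_P,f^*E)$ and $e^i_G(f)$, precisely because equivariant transversality fails in general (the paper notes that the map from geometric to homotopical equivariant bordism need not be an isomorphism). There is no geometric representative of $\chi_G(f)$ available whose fixed-point behavior you can inspect stratum by stratum, so the ``main obstacle'' you identify cannot be overcome by the general-position argument you sketch.

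The paper sidesteps all of this by working entirely on the cohomology side and dualizing only at the end. There, ``restriction to the $H$-fixed stratum'' is literally precomposition with $(P^H)^+ \to P^+$ (plus change of groups), and the comparison of global and local classes is the commutativity of a square whose rows are the exact sequences of the pair $(P_i,P_{i-1}) \cong (G\cdot P^H, G\cdot P^H_s)$ in the naive and $\RO(G)$-graded theories: one checks that $j'_1(e^i_G(f))$ is the restriction of $e_G(f)$ to $P^H$, so $\chi_G(f)=0$ gives $j'_2\ell'_1(e^i_G(f)) = \ell'_2 j'_1(e^i_G(f)) = 0$, and since $j'_2$ is the Poincar\'e dual of $(t_H)_*$ its injectivity forces $\ell'_1(e^i_G(f))=0$, hence $\chi^i_G(f)=0$. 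If you want to salvage your homological formulation, the honest construction of $r_H$ is as the Poincar\'e dual of this cohomological restriction, at which point your compatibility identity becomes the commutativity of the paper's diagram rather than a geometric fixed-point computation.
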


\begin{bigcor}[``Descent''] \label{descent}
Let $(f,i_Q)$ be an equivariant intersection problem.
Assume 
\begin{itemize} 
\item $\chi_G(f) = 0$,
\item  $(t_H)_*$ is  injective,
\item 
$p^H \le 2f^*(i_Q)_{!}\cd_H(i_Q)  - 3$,
\end{itemize}
for every $(H) \in {\cal I}(G;P)$. Then 
there is an equivariant deformation of $f$
to a map whose image is disjoint from $Q$.
\end{bigcor}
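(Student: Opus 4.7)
The plan is to prove Corollary \ref{descent} by induction along the isotropy stratification $\emptyset = P_0 \subsetneq P_1 \subsetneq \cdots \subsetneq P_\ell = P$, at each stage invoking Theorem \ref{global-to-local} to convert the vanishing of the global invariant into the vanishing of the local one, and then Theorem \ref{local} to extend the solution from $P_{i-1}$ onto $P_i$. Concretely, at stage $i$ I assume inductively that $f$ has already been equivariantly deformed to a map — still denoted $f$ — satisfying $f(P_{i-1})\cap Q = \emptyset$. The base case $i=1$ is vacuous since $P_0=\emptyset$.

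For the inductive step I must first check that the three hypotheses of the corollary are preserved under the intermediate deformations produced at previous stages. By the remark following Theorem \ref{equi-int}, $\chi_G$ is a global section of a locally constant sheaf over $\maps(P,N)^G$, so an equivariant homotopy transfers the invariant and the vanishing $\chi_G(f)=0$ persists. The pullback function $f^*(i_Q)_!\cd_H(i_Q)$ is locally constant on $P^H$ and depends only on the equivariant homotopy class of $f$, so the dimensional inequalities carry over; the same is true of the induced homomorphism $(t_H)_*$, whose source, target, and defining data depend only on the homotopy class of $f$. Granted this persistence, I apply Theorem \ref{global-to-local} with $(H)=(H_i)$: the inductive disjointness makes $\chi^i_G(f)$ defined, $(t_H)_*$ is injective by hypothesis, and $\chi_G(f)=0$, hence $\chi^i_G(f)=0$. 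Combined with the dimensional inequality $p^{H_i}\le 2f^*(i_Q)_!\cd_{H_i}(i_Q)-3$, Theorem \ref{local} then yields an equivariant deformation of $f$, fixed on $P_{i-1}$, to a new map $f'$ with $f'(P_i)\cap Q=\emptyset$. Because the deformation is constant on $P_{i-1}$, the disjointness already established on $P_{i-1}$ is untouched. Iterating $i=1,\ldots,\ell$ produces an equivariant homotopy from $f$ to a map whose image is disjoint from $Q$.

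The main point to verify — rather than a genuine obstacle — is the persistence of the three hypotheses under the intermediate deformations, which is handled by the locally constant sheaf interpretation of $\chi_G$ and by the homotopy invariance of $f^*(i_Q)_!\cd_H(i_Q)$ and $(t_H)_*$. Once that is in hand, the corollary reduces to a clean concatenation of Theorems \ref{global-to-local} and \ref{local} applied stratum by stratum.
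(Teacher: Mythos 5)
Your proposal is correct and is essentially the argument the paper intends: the proof of Theorem \ref{global-to-local} ends with exactly this induction on $i$, combining the global-to-local implication with Theorem \ref{local} stratum by stratum. Your explicit attention to the persistence of the hypotheses under the intermediate deformations (via the locally constant sheaf interpretation of $\chi_G$) is a point the paper leaves implicit, but it is the same route.
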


\subsection*{Applications}

\subsubsection*{Embeddings}
Suppose   $f\: P \to N$ is a smooth immersion. Equipping
$P$ with a Riemannian metric, we identify
the total space 
of the unit tangent disk bundle of $P$ with a compact tubular neighborhood
of the diagonal $\Delta_P \subset P \times P$. With respect
to this identification, the involution of $P \times P$ corresponds
to the one on the tangent bundle that maps a tangent vector to its negative.
Let $S(2)$ be the total space of the 
unit spherical tangent bundle of $P$, and let
$P(2)$ be the effect of deleting the interior of the tubular neighborhood
from $P \times P$. Then $(P(2),S(2))$ is a free 
$\Bbb Z_2$-manifold with boundary.

If we rescale the metric, then $f {\times} f$ 
determines an equivariant map
$$
(f(2),f(2)_{|S(2)})\:(P(2),S(2)) \to (N^{\times 2},N^{\times 2} - \Delta_N)\, ,
$$
which yields relative $\Bbb Z_2$-equivariant intersection 
problem with free domain. 
The fiber product $E(f(2),i_{\Delta_N})$ in this case coincides with the
space of triples $(x,\gamma,y)$ with $x,y \in P(2)$ and $\gamma$ a path
from $f(x)$ to $f(y)$. The involution is given by $(x,\gamma,y) \mapsto
(y,\bar \gamma,x)$, where $\bar\gamma(t) := \gamma(1-t)$.
We set $E'(f,f) := E(f(2),i_{\Delta_N})$.

Applying Addendum \ref{equi-int} and observing
the action is free, we have an obstruction
$$
\mu(f) \in \Omega_0(E{\Bbb Z}_2 \times_{\Bbb Z_2} 
E'(f,f);\id \times_{\Bbb Z_2}\xi)
$$
whose vanishing suffices for finding an equivariant deformation of 
$f(2)$, fixed on $S(2)$, to a map whose image is disjoint from $\Delta_N$,
provided $3p+3 \le 2n$. 

By a theorem of Haefliger \cite{Haefliger},
$f$ is regularly homotopic to an embedding in the metastable range $3p+3\le 2n$ if and only if the above equivariant intersection problem admits a solution. 
Consequently, 

\begin{bigcor}[compare {\cite[{th.\!\! 2.3}]{HQ}}]
If $f$ is regularly homotopic to an embedding, then
$\mu(f)$ is trivial.

Conversely, in the metastable range, the vanishing of 
$\mu(f)$ implies $f$ is regularly homotopic to an embedding.
\end{bigcor}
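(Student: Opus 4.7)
The plan is to reduce the statement to Addendum \ref{boundary} and then invoke Haefliger's metastable embedding theorem \cite{Haefliger}, which asserts that in the range $3p+3 \le 2n$ the immersion $f\colon P\to N$ is regularly homotopic to an embedding if and only if $f(2)$ admits a $\Bbb Z_2$-equivariant deformation, fixed on $S(2)$, whose image misses $\Delta_N$.

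For the necessary direction, suppose $f$ is regularly homotopic to an embedding $g\colon P \to N$. Since $g$ is both an embedding and an immersion, a suitable rescaling of the tubular neighborhood of $\Delta_P$ arranges $g(2)(P(2)) \cap \Delta_N = \emptyset$; the associated intersection problem for $g$ then admits the tautological solution, so $\mu(g) = 0$. A regular homotopy from $f$ to $g$ induces a $\Bbb Z_2$-equivariant path from $f(2)$ to $g(2)$ in $\maps(P(2),N^{\times 2})^{\Bbb Z_2}$ whose restriction to $S(2)$ avoids $\Delta_N$ at every time. By the sheaf-theoretic invariance of Remark~1 following Theorem \ref{equi-int}, applied in its relative form, the resulting stalk isomorphism transports $\mu(f)$ to $\mu(g) = 0$.

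For the converse, assume $\mu(f) = 0$ and $3p+3 \le 2n$. Because the $\Bbb Z_2$-action on $P(2)$ is free, the ``Free actions'' specialization of Theorem \ref{equi-int}, carried over to the setting of Addendum \ref{boundary}, makes the second set of hypotheses vacuous and reduces the first to the single dimension inequality
$$
2p \,\,\le\,\, 2(2n)-2n-3 \,\,=\,\, 2n-3,
$$
which is implied by $3p+3\le 2n$. Applying Addendum \ref{boundary} to the relative equivariant intersection problem $(f(2),i_{\Delta_N})$ produces a $\Bbb Z_2$-equivariant deformation of $f(2)$, fixed on $S(2)$, whose image is disjoint from $\Delta_N$. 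By Haefliger's theorem, such a deformation is equivalent, in the metastable range, to a regular homotopy from $f$ to an embedding.

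The main obstacle is the compatibility bookkeeping at the interface: one must verify that $f$ being an immersion permits a rescaling of the tubular neighborhood making $f(2)(S(2))\cap \Delta_N = \emptyset$, so that $(f(2),i_{\Delta_N})$ is genuinely a relative intersection problem in the sense of Addendum \ref{boundary}, and that the invariant $\mu(f)$ arising from our homotopy-theoretic framework matches, under the sheaf transport, the invariant whose vanishing Haefliger converts to a regular homotopy to an embedding. Once these translation steps are in place, both implications follow directly from Addendum \ref{boundary} and \cite{Haefliger}.
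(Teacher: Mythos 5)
Your argument is correct and follows the same route as the paper: the corollary is obtained by specializing Addendum~\ref{boundary} to the free relative $\Bbb Z_2$-problem $(f(2),i_{\Delta_N})$, noting that freeness collapses the hypotheses to the single inequality $2p\le 2n-3$ (implied by $3p+3\le 2n$), and then translating back and forth with Haefliger's theorem; the necessity direction via sheaf transport of $\mu$ along the regular homotopy is likewise the intended one. No substantive differences to report.
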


\subsubsection*{Equivariant fixed point theory}
Let $M$ be a closed smooth manifold equipped a smooth
action of a finite group $G$.
Let
$$
\maps^\flat(M,M)^G
$$ 
denote the space of
fixed point free $G$-maps from $M$ to itself. 
Equivariant fixed point theory studies the extent to which the inclusion
$$
\maps^\flat(M,M)^G \to \maps(M,M)^G
$$ 
is a surjection on path components.

For an equivariant self map $f\: M \to M$, let
$$
L_f M
$$ 
be the space of paths $\lambda \: [0,1] \to M$
satisfying the constraint $f(\lambda(0)) = \lambda(1)$. Then
$G$ acts on $L_f M$ pointwise. Let
$(L_f M)_+$ be the effect of adding a disjoint basepoint
to $L_fM$, and finally, let 
$$
 \Omega^{G,\text{fr}}_0(L_f M))
$$
be the $G$-equivariant framed bordism of $L_f M$ in dimension zero.

\begin{bigthm} \label{lefschetz} There is 
an invariant
$$
\ell_G(f) \in 
 \Omega^{G,\text{\rm fr}}_0(L_f M)
$$
which vanishes when $f$ is equivariantly homotopic to a
fixed point free map.

Conversely, assume $\ell_G(f) = 0$. If
\begin{itemize}
\item $m^H \ge 3$ for all $(H) \in {\cal I}(G;M)$.
\item $m^H \le m^K - 2$ 
  for proper inclusions $K \subsetneq H$ with $K, H \in {\cal I}(G;M)$,
\end{itemize}
then $f$ is equivariantly homotopic to a fixed point free map.
\end{bigthm}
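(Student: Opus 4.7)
The plan is to realize the equivariant fixed point problem as an equivariant intersection problem on $M \times M$ and then apply Theorem \ref{equi-int}. Endow $M \times M$ with the diagonal $G$-action and take $P = M$, $N = M \times M$, $Q = \Delta_M$, and the map $\Gamma_f\:M \to M \times M$, $x \mapsto (x,f(x))$, the graph of $f$. Equivariance of $f$ makes $\Gamma_f$ a $G$-map, its preimage of $\Delta_M$ is exactly the fixed point set of $f$, and an equivariant homotopy of $\Gamma_f$ off of $\Delta_M$ is the same data as a $G$-homotopy of $f$ to a fixed point free map.

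I would next identify the obstruction group. A point of $E(\Gamma_f, i_{\Delta_M})$ consists of $x,y \in M$ together with a pair of paths $(\lambda_1,\lambda_2)$ in $M$, from $x$ to $y$ and from $f(x)$ to $y$ respectively; concatenating $\lambda_1$ with the reverse of $\lambda_2$ gives a path from $x$ to $f(x)$, and this yields a $G$-equivariant homotopy equivalence $E(\Gamma_f, i_{\Delta_M}) \simeq L_f M$. Under this equivalence the forgetful maps $j_P$, $j_Q$, $j_N$ are each $G$-homotopic to the endpoint evaluation $L_f M \to M$, so using the splitting $\tau_{M \times M} = p_1^*\tau_M \oplus p_2^*\tau_M$ the virtual $G$-bundle
\[
\xi = j_N^*\tau_N - j_Q^*\tau_Q - j_P^*\tau_P
\]
is stably $G$-trivial on $L_f M$. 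The obstruction group $\Omega^G_0(E(\Gamma_f, i_{\Delta_M});\xi)$ is therefore isomorphic to the equivariant framed bordism $\Omega^{G,\text{fr}}_0(L_f M)$, and I define $\ell_G(f)$ as the image of $\chi_G(\Gamma_f)$ under this isomorphism. The existence of the invariant, and its vanishing when $f$ is $G$-homotopic to a fixed point free map, then follow immediately from the first half of Theorem \ref{equi-int}.

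For the converse, the dimensional hypotheses of Theorem \ref{equi-int} need to be translated. Since $\Delta_M \subset M \times M$ has codimension $m^H$ at $H$-fixed points, one has $\cd_H(i_{\Delta_M}) \equiv m^H$ on $\Delta_M^H$, and (with due care when $M^K$ is disconnected) $\Gamma_f^*(i_{\Delta_M})_!\cd_K(i_{\Delta_M}) = m^K$ as a function on $M^H$ for every $K \subset H$. The two inequalities of Theorem \ref{equi-int} then read $m^H \le 2m^H - 2$ and $m^H \le m^K - 2$ for $K \subsetneq H$, both of which are implied by the standing hypotheses $m^H \ge 3$ and $m^H \le m^K - 2$. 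Hence $\ell_G(f) = 0$ produces an equivariant deformation of $\Gamma_f$ off of $\Delta_M$, and therefore a fixed point free $G$-homotopy of $f$.

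The main obstacle is the careful verification that $\xi$ is stably $G$-trivial in a sufficiently natural fashion, so that the identification with framed equivariant bordism is functorial rather than merely abstract; this requires matching the diagonal $G$-action on $\tau_{M\times M}$ against the two copies of $\tau_M$ pulled back along the endpoint evaluations of the path-space $E \simeq L_f M$. Handling disconnected fixed-point sets when translating the pushforward indexing-function inequalities also requires some bookkeeping, but no new ideas.
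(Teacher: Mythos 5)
Your setup (the graph $\Gamma_f\colon M \to M\times M$, the identification $E(\Gamma_f,i_{\Delta_M})\simeq L_fM$, and the stable triviality of $\xi$) is exactly the paper's, and the first half of the theorem goes through as you say. The problem is in the converse direction: you apply Theorem \ref{equi-int} directly, but its second set of inequalities is required for \emph{every} proper subgroup $K \subsetneq H$, whereas the hypothesis of Theorem \ref{lefschetz} only gives $m^H \le m^K - 2$ for $K$ ranging over \emph{isotropy} groups. These are not equivalent. Concretely, let $G = \Bbb Z_4$ act on $M = S^7 = S(\Bbb C^2\oplus\Bbb R^4)$ by multiplication by $i$ on $\Bbb C^2$. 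The isotropy types are $e$ and $\Bbb Z_4$, with $m = 7$, $m^{\Bbb Z_4}=3$, so both hypotheses of Theorem \ref{lefschetz} hold. But for the non-isotropy subgroup $K=\Bbb Z_2\subsetneq H=\Bbb Z_4$ one has $M^{\Bbb Z_2}=M^{\Bbb Z_4}$, hence $\Gamma_f^*(i_{\Delta})_!\cd_K(i_\Delta)=m^{\Bbb Z_4}=3$, and the required inequality $m^H \le \cd_K - 2$ reads $3 \le 1$. So Theorem \ref{equi-int} simply does not apply under the stated hypotheses, and your claim that the two bullet points imply its hypotheses is false.

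The paper avoids this by routing through Corollary \ref{descent} (descent from the local obstructions of Theorem \ref{local}) rather than Theorem \ref{equi-int}. There the gap inequalities are replaced by the requirement that each $(t_H)_*$ be injective, and only the single codimension inequality $m^H \le 2m^H-3$, i.e.\ $m^H\ge 3$, survives. The injectivity of $(t_H)_*$ is where the real work lies: one uses that $M_H\subset M^H$ is $1$-connected (this is where the gap condition $m^H\le m^K-2$ for \emph{isotropy} groups enters, since $M^H - M_H$ is a union of fixed sets of isotropy groups strictly containing $H$), that $W(H)$ acts freely on $M_H\times_M L_fM$, and the tom Dieck splitting of the fixed points of the relevant suspension spectra to see that the map is injective on $\pi_0$. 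That argument is the essential missing ingredient in your proposal. A secondary, fixable point: your assertion that an equivariant deformation of $\Gamma_f$ off $\Delta_M$ ``is the same data as'' a deformation of $f$ to a fixed point free map needs the square \eqref{fixed_square} to be a homotopy pullback (Lemma \ref{cartesian}), proved using that the projections $M\times M\to M$ and $M\times M-\Delta\to M$ are equivariant fibrations; a deformation of $\Gamma_f$ need not stay a graph.
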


\begin{intro-rems} (1). The above can be regarded as an equivariant
analog of a classical theorem of Wecken \cite{Wecken}.
\smallskip

{\flushleft (2).} A formula of tom Dieck splits
$\Omega^{G,\text{\rm fr}}_0(L_f M)$
into a direct sum of unequivariant framed bordism
groups indexed over the conjugacy classes of subgroups
of $G$. The summand corresponding to a conjugacy class $(H)$
is 
$$
\Omega^{\text{\rm fr}}_0(EW(H) \times_{W(H)} L_{f^H} M)\, ,
$$
where $EW(H) \times_{W(H)} L_{f^H} M$ is the Borel construction
of the Weyl group $W(H)$ acting on  $L_{f^H} M$
(see \cite{tD}, \cite{May}).
Consequently, $\ell_G(f)$ decomposes as a sum
of invariants indexed in the same way. 
We conjecture the Nielsen number $N(f^H)$ can
be computed from the projection of $\ell_G(f)$ onto
the displayed summand.
\smallskip

{\flushleft (3).} Our result bears close similarity to a theorem 
of Fadell and Wong \cite{Fadell-Wong} (see also
\cite{Ferrario},\cite{Weber}). Their result uses 
the Nielsen numbers $N(f^H)$ with $(H)\in
{\cal I}(G;M)$ in place of our $\ell_G(f)$.
\end{intro-rems}

\subsubsection*{Periodic Points} 
A fundamental problem in  discrete dynamics
is to   enumerate the periodic orbits of a self map
$f\: M\to M$, where $M$ is a closed manifold.

Let $n \ge 2$ be an integer.
A point $x\in M$
is said to be  {\it $n$-periodic} if $x$ is a
fixed point of the $n$-th iterate of $f$, i.e., $f^n(x) = x$.
The set of $n$-periodic points of $f$ is
denoted
$$
P_n(f)\, .
$$
The cyclic group $\Bbb Z_n$ acts on $P_n(f)$: if $t\in {\Bbb Z_n}$ is a
generator, then the action is defined by
$t\cdot x := f(x)$. 

The {\it homotopy $n$-periodic point set} of $f$
is the $\Bbb Z_n$-space
$$
\text{ho} P_n(f)
$$
consisting of $n$-tuples 
$$
(\lambda_1,\lambda_2,...,\lambda_n)\, ,
$$
in which $\lambda_i\:[0,1] \to M$ is a path and the data are
subject to the constraints
$$
 f(\lambda_{i+1}(0)) = \lambda_i(1))\, , \qquad i = 1,2,\dots
$$
Here we interpret the subscript $i$ as being taken modulo $n$.
The action of $\Bbb Z_n$ on $\text{ho}P_n(f)$ is 
given by cyclic permutation of factors.

There is a map $P_n(f) \to \hoP_n(f)$
 given by sending an $n$-periodic point $x$
to the $n$-tuple 
$$
(c_x,c_{f(x)},c_{f^2(x)},...,c_{f^{n-1}(x)})
$$
in which $c_x$ denotes the constant path with value $x$.
\medskip

For a self map $f\:M \to M$, as above, let
\[
\Omega_0^{\Bbb Z_n,\text{\rm fr}}(\hoP_n(f))
\] 
be the ${\Bbb Z}_n$-equivariant framed bordism 
group of $\hoP_n(f)$ in dimension $0$.

\begin{bigthm}\label{periodic} 
There is a homotopy theoretically defined invariant
$$
\ell_n(f) \in \Omega_0^{\Bbb Z_n,\text{\rm fr}}(\hoP_n(f))
$$
which is an obstruction to deforming $f$ to an $n$-periodic point
free self map.
\end{bigthm}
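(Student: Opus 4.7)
The plan is to reduce this to Theorem~\ref{lefschetz} applied to a natural equivariant self-map of the $n$-fold product. I would let $\Bbb Z_n$ act on $M^n$ by cyclic permutation of factors, and define
$$
F\:M^n\to M^n,\qquad F(x_1,\dots,x_n):=(f(x_2),f(x_3),\dots,f(x_n),f(x_1)).
$$
A direct check shows $F$ is $\Bbb Z_n$-equivariant, and the fixed points of $F$ are precisely the tuples $(x,f^{n-1}(x),\dots,f(x))$ with $f^n(x)=x$. In particular, $F$ is fixed point free if and only if $f$ is $n$-periodic point free, and any homotopy $f_s$ of $f$ induces an equivariant homotopy $F_s$ of $F$.

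Applying Theorem~\ref{lefschetz} to $F$ then produces an invariant
$$
\ell_{\Bbb Z_n}(F) \in \Omega^{\Bbb Z_n,\text{\rm fr}}_0(L_F M^n),
$$
and I would define $\ell_n(f)$ to be the image of this invariant under a natural $\Bbb Z_n$-equivariant identification $L_F M^n\cong \hoP_n(f)$. This identification is essentially formal: writing a path $\Lambda\:[0,1]\to M^n$ with $F(\Lambda(0))=\Lambda(1)$ in components as $\Lambda(s)=(\lambda_1(s),\dots,\lambda_n(s))$ unravels the constraint to $f(\lambda_{i+1}(0))=\lambda_i(1)$ for all $i$ modulo $n$, which is exactly the defining relation for $\hoP_n(f)$; and the pointwise $\Bbb Z_n$-action on paths corresponds to cyclic permutation of the $\lambda_i$, matching the action described in the introduction.

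It remains to verify that $\ell_n(f)$ really is an obstruction. If $f_s$ is a deformation of $f=f_0$ to an $n$-periodic point free map $f_1$, then $F_s$ is an equivariant deformation of $F$ to the fixed point free map $F_1$. The invariant $\ell_{\Bbb Z_n}$ of Theorem~\ref{lefschetz} is a global section of a locally constant sheaf on the equivariant mapping space (compare the corresponding remark for Theorem~\ref{equi-int}), so the homotopy $F_s$ transports $\ell_{\Bbb Z_n}(F)$ to $\ell_{\Bbb Z_n}(F_1)$, which vanishes by Theorem~\ref{lefschetz}. Passing through $L_F M^n\cong \hoP_n(f)$ gives $\ell_n(f)=0$, as required.

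The main technical care needed is in bookkeeping: checking the $\Bbb Z_n$-equivariance of the identification $L_F M^n\cong \hoP_n(f)$ and confirming that the locally-constant-sheaf formulation of $\ell_{\Bbb Z_n}$ delivers the form of homotopy invariance invoked in the last step. Neither presents a conceptual difficulty, and the construction of $\ell_n$ is otherwise a direct specialization of Theorem~\ref{lefschetz}.
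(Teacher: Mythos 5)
Your proposal is correct and follows essentially the same route as the paper: the paper introduces the Fuller map $\Phi_n(f)\:(x_1,\dots,x_n)\mapsto(f(x_n),f(x_1),\dots,f(x_{n-1}))$ on $M^{\times n}$, sets $\ell_n(f):=\ell_{\Bbb Z_n}(\Phi_n(f))$ via Theorem~\ref{lefschetz}, and identifies $L_{\Phi_n(f)}M^{\times n}$ with $\hoP_n(f)$ exactly as you do (your $F$ is just the opposite cyclic shift, which changes nothing). The only difference is that you spell out the identification $L_F M^n\cong \hoP_n(f)$ that the paper dismisses as ``a straightforward calculation we omit.''
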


\begin{intro-rems}  
At the time of writing, we do not know the extent to which $\ell_n(f)$ is the
complete obstruction to making $f$ $n$-periodic point free.  When
$\dim M \ge 3$, Jezierski \cite{Jezierski} has shown the vanishing of
the Nielsen numbers $N(f^k)$ for all divisors $k|n$ implies $f$
is homotopic to an $n$-periodic point free map (here $f^k$ denotes
the $k$-fold composition of $f$ with itself). We conjecture that
$\ell_n(f)$ determines $N(f^k)$. 
\end{intro-rems}

\subsubsection*{Periodic points and the fundamental group}
Let $\pi$ be a group equipped with endomorphism 
$\rho\: \pi\to \pi$.
Consider the equivalence relation on $\pi$ generated by the
elementary relations
$$
x \,\sim \, gx\rho^n(g)^{-1}  \quad \text{and} \quad x\, \sim \, \rho(x)
$$
for  $x,g \in \pi$.
Let
$$
\pi_{\rho,n}
$$
be the set of equivalence classes.
Let
$$
\Bbb Z[\pi_{\rho,n}]
$$
denote the free abelian group with basis $\pi_{\rho,n}$. 

Let $f\: M \to M$ be a self map of a connected closed manifold $M$.
Fix a basepoint $*\in M$.
Choose a homotopy
class of path $[\alpha]$ from $*$ to $f(*)$.
Then $[\alpha]$ defines an isomorphism
$$
\pi_1(M,*) \cong \pi_1(M,f(*))\, .
$$
Furthermore, $f$ and $[\alpha]$ together define a homomorphism 
$$
\rho\: \pi_1(M,*) \overset {f_\sharp} \to \pi_1(M,f(*)) \cong \pi_1(M,*) \, .
$$
Let $\pi = \pi_1(M,*)$.

\begin{bigthm} \label{stalk} The data consisting
of the self map $f\: M \to M$, 
the choice of basepoint $*\in M$ and the homotopy 
class of path $[\alpha]$ from $*$ to $f(*)$
determine an isomorphism of abelian groups
$$
\Omega_0^{\Bbb Z_n,\text{\rm fr}}(\hoP_n(f)) \,\, \cong \,\,
\bigoplus_{k|n}  \Bbb Z[\pi_{\rho,k}]\, .
$$
With respect to this isomorphism, there is
a decomposition
$$
\ell_n(f)\,\,  = \,\,\underset {k|n}\oplus \, \ell_n^k(f)\, ,
$$ 
in which $\ell_n^k(f)\in \Bbb Z[\pi_{\rho,k}]$.
\end{bigthm}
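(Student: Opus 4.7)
The strategy for Theorem \ref{stalk} is to combine the tom Dieck splitting for $\Bbb Z_n$-equivariant framed bordism with an explicit identification of the fixed-point sets of $\hoP_n(f)$ and a twisted-conjugacy computation of path components. First I would apply the tom Dieck formula quoted in the remarks after Theorem \ref{lefschetz}. The subgroups of $\Bbb Z_n$ are the $\Bbb Z_d$ for $d\mid n$, each with Weyl group $\Bbb Z_n/\Bbb Z_d \cong \Bbb Z_{n/d}$. Setting $k := n/d$, the resulting splitting takes the form
\[
\Omega_0^{\Bbb Z_n,\text{fr}}(\hoP_n(f)) \;\cong\; \bigoplus_{k\mid n} \Omega_0^{\text{fr}}\bigl(E\Bbb Z_k \times_{\Bbb Z_k} \hoP_n(f)^{\Bbb Z_{n/k}}\bigr).
\]

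Second, I would identify each fixed-point space. The generator of $\Bbb Z_{n/k}\subset \Bbb Z_n$ acts on $\hoP_n(f)$ as the cyclic shift by $k$ positions, so a tuple is fixed iff it is periodic of period $k$; such a tuple is determined by its first $k$ entries, which satisfy exactly the defining constraints of $\hoP_k(f)$. This yields a $\Bbb Z_k$-equivariant homeomorphism $\hoP_n(f)^{\Bbb Z_{n/k}} \cong \hoP_k(f)$, with the Weyl group $\Bbb Z_k$ acting on the target by cyclic permutation.

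Third, since $\Omega_0^{\text{fr}}(Y)\cong \Bbb Z[\pi_0(Y)]$ and $\pi_0$ of a Borel construction is the orbit set of $\pi_0$ under the group action, the problem reduces to computing $\pi_0(\hoP_k(f))/\Bbb Z_k$. I would construct a homotopy equivalence $\hoP_k(f)\simeq L_{f^k}M$ via concatenation $(\lambda_1,\dots,\lambda_k)\mapsto \lambda_1\ast (f\circ\lambda_2)\ast\cdots\ast (f^{k-1}\circ\lambda_k)$, and then use the basepoint $*$ together with the concatenated reference path $\alpha\ast (f\alpha)\ast\cdots\ast (f^{k-1}\alpha)$ from $*$ to $f^k(*)$ to identify $\pi_0(L_{f^k}M)$ with the set of $\rho^k$-twisted conjugacy classes, i.e.\ with $\pi$ modulo the relation $x\sim gx\rho^k(g)^{-1}$.

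The main hurdle is to track the cyclic permutation on $\hoP_k(f)$ through this equivalence and verify that it descends to the operation $x\mapsto \rho(x)$ on twisted conjugacy classes rather than to the identity. A convenient approach is to identify $\hoP_n(f)$ with the space of degree-$1$ loops in the $n$-fold cover $T_f^{(n)}$ of the mapping torus $T_f$, under which the $\Bbb Z_n$-action becomes the deck-transformation action of $T_f^{(n)}\to T_f$; using $\pi_1(T_f)\cong \pi\rtimes_\rho \Bbb Z$, a direct calculation then shows that the deck transformation acts on $\rho^k$-twisted conjugacy classes as $x\mapsto \rho(x)$. Quotienting by the $\Bbb Z_k$ Weyl action therefore yields $\pi$ modulo the two relations $x\sim gx\rho^k(g)^{-1}$ and $x\sim \rho(x)$, which by definition is $\pi_{\rho,k}$, and the decomposition $\ell_n(f) = \bigoplus_{k\mid n}\ell_n^k(f)$ follows immediately from the direct-sum splitting.
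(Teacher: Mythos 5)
Your proposal is correct and follows the same overall architecture as the paper's proof: tom Dieck splitting of $\Omega_0^{\Bbb Z_n,\text{\rm fr}}(\hoP_n(f))$ into summands indexed by divisors $k\mid n$, identification of the relevant fixed-point space with $\hoP_k(f)$ carrying the cyclic Weyl action, and reduction to computing $\pi_0$ of the Borel construction. (You are in fact more careful than the paper at the first step: the paper writes ``$\hoP_n(f)^{\Bbb Z_k}=\hoP_k(f)$'' where it means the fixed points of the index-$k$ subgroup, i.e.\ your $\Bbb Z_{n/k}$.) Where you genuinely diverge is in the last step. The paper computes $\pi_0(\hoP_k(f))$ by a direct normal-form manipulation: it represents components by $k$-tuples in $\pi^{\times k}$, writes down the relation $(x_1,\dots,x_k)\sim(g_1x_1\rho(g_2)^{-1},\dots,g_kx_k\rho(g_1)^{-1})$, reduces every class to $(y,1,\dots,1)$ with $y=x_1\rho(x_2)\cdots\rho^{k-1}(x_k)$, and then reads off both the $\rho^k$-twisted conjugacy relation and the fact that cyclic permutation acts as $y\mapsto\rho(y)$ on normal forms. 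You instead pass through the concatenation equivalence $\hoP_k(f)\simeq L_{f^k}M$ and the mapping-torus/covering-space picture. Your route is more conceptual and makes the appearance of Reidemeister classes of $\rho^k$ transparent, but it pushes the only delicate point --- that the Weyl generator acts as $x\mapsto\rho(x)$ rather than trivially --- into a covering-space calculation, whereas the paper's normal-form computation settles that point by inspection (note that your element $y$ is exactly the image of the $k$-tuple under your concatenation map, so the two calculations are literally compatible). One small caveat in your version: for a general self map $f$ the induced map $\rho$ need not be an automorphism, so $\pi_1(T_f)$ is the ascending HNN extension $\langle\pi,t\mid txt^{-1}=\rho(x)\rangle$ rather than a semidirect product $\pi\rtimes_\rho\Bbb Z$; the conclusion that the deck transformation acts as $x\mapsto\rho(x)$ on twisted conjugacy classes survives, but you should phrase it that way (or simply verify the action directly on $k$-tuples, as the paper does, which avoids the issue entirely).
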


\section{Preliminaries \label{prelim}}

\subsection*{$G$-Universes} 
The $G$-representations of this paper are assumed to come equipped
with a $G$-invariant inner product.  A {\it $G$-universe} ${\cal U}$
is a countably infinite dimensional real representation of $G$ which
contains the trivial representation and which contains infinitely many
copies of each of its finite dimensional subrepresentations.

We will be interested in two kinds of universes.  A {\it complete}
universe is one that contains infinitely many copies of
representatives for the irreducible representations of $G$ (in this
instance one can take $\cal U$ to be the countable direct sum of the
regular representation). A {\it trivial} universe contains only
trivial representations.

\subsection*{Spaces}
We work in the category of compactly generated topological spaces.
The empty space is $(-2)$-connected and every non-empty space is
$(-1)$-connected. A map $A \to B$ of spaces (with $B$ nonempty) is
$r$-connected if for any choice of basepoint in $B$, the
homotopy fiber with respect to this choice of basepoint is an
$(r{-}1)$-connected space In particular, any map $A \to B$ is
$(-1)$-connected.  A weak homotopy equivalence 
is an $\infty$-connected map.

\subsection*{$G$-spaces}
Let $G$ be a finite group. A {\it $G$-space} is
a space $X$ equipped with a left action of $G$. A map
of $G$-spaces is a $G$-equivariant map.

Let $T$ be a transitive $G$-set.
The {\it $T$-cell} of dimension $j$ is the $G$-space
$$
T \times D^j \, ,
$$
where $G$ acts diagonally with trivial action on $D^j$.

\begin{rem} If a choice of basepoint $t\in T$ is given, then
one has a preferred isomorphism $G/H \cong T$, where  
$H = G_t$ is the stabilizer of $t$. Given another choice
of basepoint $t'$, the stabilizer group $G_{t'}$ is conjugate
to $H$. We will call the conjugacy class $(H)$ the {\it type} of $T$. 
Two transitive $G$-sets are isomorphic if and only if
they have the same type.
\end{rem}

Given a $G$-map $f\:T \times S^{j-1} \to Y$, one may form 
$$
Y \cup_f (T \times D^j)\, .
$$
This is called an {\it $T$-cell attachment.} 
If $j = 0$, we interpret the above as a disjoint union.

A {\it relative $G$-cell complex} $(X,Y)$ is a pair in which
$X$ is obtained from $Y$ by iterated equivariant cell attachments
(where we allow $T$ to vary over different transitive $G$-sets;
the collection of attached cells is allowed to be a class). 
The order of  attachment defines a partial ordering on the 
collection of cells.
If this order is dimension preserving (i.e., no cell of dimension
$j$ is attached after a cell of dimension $j'$ when $j < j'$), then
$(X,Y)$ is a {\it relative $G$-CW complex}.
When the collection of such attachments is finite, one
says $(X,Y)$ is {\it finite}.
When $Y$ is the empty space, $X$ is a {\it $G$-cell complex} and
when the attachments are self-indexing, $X$ is a $G$-CW complex.

The {\it cellular dimension function} $d_\bullet$ for 
$(X,Y)$ is the indexing function whose value at $H$ is
the maximal dimension of the cells of type $(H)$ 
appearing in the collection of attached cells.
We set $d_H = -\infty$ if  $(X,Y)$ has no cells of type
$(H)$.

\begin{rem} Let $M$ be a closed smooth
$G$-manifold with dimension function $m^\bullet$. A result of Illman \cite{Illman} shows that
$M$ possesses
an equivariant triangulation. If $d_\bullet$ is the cellular dimension
function of this triangulation, then $d_H = m^H$ for all $H \in {\cal I}(G;M)$. 
\end{rem}

\subsection*{Quillen model structure}
Let $T(G)$ be the category of $G$-spaces.
A morphism $f\:X \to Y$ is a {\it weak equivalence} if
for every subgroup $H\subset G$ the induced map of fixed points
$$
f^H\: X^H \to Y^H
$$
is a weak homotopy equivalence. Similarly, a morphism
$f$ is a {\it fibration}
if $f^H$ is a Serre fibration for every $H$.
A morphism $f\: X\to Y$ is a {\it cofibration} if
there is a relative $G$-cell complex $(Z,X)$ such that $Y$ is a retract
of $Z$ relative to $X$.  

Let $R(G)$ be the category of {\it based} $G$-spaces.
A morphism $X\to Y$ of $R(G)$ is a weak equivalence, 
cofibration or fibration if and only if it is so when 
considered as a morphism of $T(G)$.

\begin{prop}[\cite{Dwyer-Kan_sing-real}, {\cite[Ch.\ VI \S5]{May}}]  
With respect to the above structure,
both $T(G)$ and $R(G)$ are Quillen model categories.
\end{prop}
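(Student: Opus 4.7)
The plan is to verify the Quillen axioms by transferring the classical Serre model structure along the collection of fixed-point functors $(-)^H\colon T(G)\to \Top$ indexed by subgroups $H\subset G$. The pivotal formal input is the Yoneda-type adjunction
\[
\maps_{T(G)}(T\times A,\,Y)\;\cong\;\maps(A,\,Y^H),
\]
valid whenever $T$ is a transitive $G$-set of type $(H)$ and $A$ is an ordinary (unequivariant) space.

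First I would dispatch the formal axioms. The category $T(G)$ is complete and cocomplete because limits and colimits of $G$-diagrams in $\Top$ inherit a compatible $G$-action; the 2-out-of-3 axiom for weak equivalences and the retract axioms for weak equivalences and for fibrations follow by applying the corresponding axioms in $\Top$ to each fixed-point set; the retract axiom for cofibrations is built into the definition.

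Next I would deploy Quillen's small object argument with generating sets
\[
I=\{T\times S^{j-1}\hookrightarrow T\times D^j\}\quad\text{and}\quad J=\{T\times D^j\times\{0\}\hookrightarrow T\times D^j\times [0,1]\},
\]
where $T$ ranges over (chosen representatives of) the isomorphism classes of transitive $G$-sets and $j\ge 0$. Using the adjunction above, a $G$-map has the right lifting property against $I$ (respectively $J$) if and only if, for every subgroup $H$, the induced map on $H$-fixed points has the right lifting property in $\Top$ against $S^{j-1}\hookrightarrow D^j$ (respectively $D^j\times\{0\}\hookrightarrow D^j\times[0,1]$). This identifies $I$-injectives with trivial fibrations and $J$-injectives with fibrations in the senses of the proposition. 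The small object argument then supplies functorial factorizations into a relative $I$-cell complex followed by a trivial fibration, and into a relative $J$-cell complex followed by a fibration.

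The crux---and what I expect to be the main technical obstacle---is checking that relative $J$-cell complexes are weak equivalences: being cofibrations is built into their construction, but one must also show that pushouts of each generator $T\times D^j\times\{0\}\hookrightarrow T\times D^j\times[0,1]$ along an arbitrary $G$-map remain fixed-point-wise weak equivalences, and that this property is stable under transfinite composition. For the pushout step, the retraction $T\times D^j\times[0,1]\to T\times D^j\times\{0\}$ is a $G$-equivariant strong deformation retraction that descends through pushout on each $H$-fixed set; transfinite stability follows because spheres and disks are compact, so each element of $\pi_n$ of the colimit is represented at some finite stage. With this in hand, the standard retract argument identifies cofibrations with retracts of relative $I$-cell complexes and trivial cofibrations with retracts of relative $J$-cell complexes, whence the lifting axioms drop out. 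For the based category $R(G)$, the forgetful functor to $T(G)$ creates limits, colimits, fibrations, and weak equivalences, while the generators become the evident pointed versions $(T\times S^{j-1})_+\hookrightarrow (T\times D^j)_+$ and $(T\times D^j\times\{0\})_+\hookrightarrow (T\times D^j\times[0,1])_+$; all preceding arguments transport verbatim.
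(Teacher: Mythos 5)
The paper offers no proof of this proposition---it is quoted from \cite{Dwyer-Kan_sing-real} and \cite[Ch.\ VI \S5]{May}---and your argument is precisely the standard cofibrantly generated transfer along the fixed-point functors used in those references, with the correct generating sets and the correct identification of $I$- and $J$-injectives via the adjunction $\maps_{T(G)}(T\times A,Y)\cong\maps(A,Y^H)$ for $T$ of type $(H)$ and $A$ with trivial action. Your proof is correct; the only point worth making explicit is that the smallness of $D^j$ and $S^{j-1}$ needed for the small object argument, and the finite-stage representability of homotopy elements under transfinite composition, rely on the intermediate maps being closed inclusions of compactly generated spaces, which holds for relative cell attachments.
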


\subsection*{Connectivity}
One says an indexing function
$r_\bullet$ is  a {\it connectivity function}
 for a $G$-space $Y$ if $Y^H$ is $r_H$-connected for $H \subset G$
a subgroup (if $Y^H$ is empty, we set $r_H = -2$). 
If $f\:Y \to Z$ is a morphism of $T(G)$, then a connectivity function
for $f$ is an indexing function $r_\bullet$
such that $f^H\:Y^H \to Z^H$  is an $r_H$-connected map
of spaces (one can always assume $r_H \ge -1$ 
since every map of spaces is at least $(-1)$-connected).

\begin{lem} \label{factorization} Let $Y \to Z$ 
be a fibration of $T(G)$ with connectivity function $r_\bullet$.
Suppose $(X,A)$ is a  relative $G$-cell complex
with cellular dimension function $d_\bullet$. Assume 
$d_H \le r_H$ for all subgroups $H \subset G$.
Then given a factorization
problem of the form 
$$
\xymatrix{
A \ar[r] \ar[d] & Y \ar[d] \\
X \ar[r]\ar@{..>}[ur] & Z
}
$$
we can find an equivariant lift $X \to Y$ such that
the diagram commutes.
\end{lem}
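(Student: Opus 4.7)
The plan is to proceed by transfinite induction on the cells of the relative $G$-cell complex $(X,A)$. Fix a total ordering of the cell attachments compatible with the given partial order, let $X_\alpha$ be the subcomplex consisting of $A$ together with all cells attached strictly before stage $\alpha$, and inductively construct compatible equivariant lifts $\widetilde g_\alpha\: X_\alpha \to Y$ of the given map $X \to Z$, extending the given map on $A = X_0$. At a limit ordinal the lifts assemble into a lift on the union because the filtration is by equivariant cofibrations. Hence the only real content lies at successor stages, where one must extend the lift across a single $T$-cell attachment.

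So fix a successor stage at which we must extend across a cell of the form $T \times D^j$ attached along a $G$-map $T \times S^{j-1} \to X_\alpha$. Let $(H)$ be the type of $T$ and choose $t \in T$ with $G_t = H$. Because $G$ acts trivially on the $D^j$-factor, the standard induction-restriction adjunction
$$
\operatorname{Map}^G(T \times W,\,U) \;\cong\; \operatorname{Map}(W,\,U^H),
$$
valid for any $G$-space $U$ and any space $W$ carrying the trivial $G$-action, converts the equivariant cell-extension problem into the ordinary lifting problem
$$
\xymatrix{
S^{j-1}\ar[r]\ar[d] & Y^H\ar[d]\\
D^j\ar[r]\ar@{..>}[ur] & Z^H\,.
}
$$
Since $Y \to Z$ is an equivariant fibration, $Y^H \to Z^H$ is an ordinary Serre fibration, and by hypothesis it is $r_H$-connected.

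The key numerical input is $j \le d_H \le r_H$. Under the paper's connectivity convention this means the homotopy fiber of $Y^H \to Z^H$ is $(r_H - 1)$-connected, so the classical obstruction to solving the displayed lifting problem, which lives in $\pi_{j-1}$ of that fiber, vanishes. Pushing the resulting non-equivariant lift back through the adjunction produces the required equivariant extension across the cell, and the induction closes.

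I do not expect a serious obstacle here; the argument is a routine equivariant upgrade of the cellular lifting lemma from unequivariant obstruction theory. The only points worth tracking are that the fixed-point functor turns equivariant Serre fibrations into ordinary Serre fibrations, which is built into the model structure on $T(G)$, and that the edge case $j = 0$ (for which $S^{j-1} = \emptyset$, so the $T$-cell is attached disjointly) is handled analogously, using $r_H \ge d_H \ge 0$ to guarantee surjectivity of $\pi_0(Y^H) \to \pi_0(Z^H)$ so that each new $T$-orbit admits a compatible lift.
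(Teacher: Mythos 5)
Your argument is correct and is essentially the paper's own proof: induct over the cells of $(X,A)$, use the adjunction identifying $G$-maps out of $T\times W$ (trivial action on $W$) with maps $W\to(-)^H$ to reduce each cell attachment to an unequivariant lifting problem for the Serre fibration $Y^H\to Z^H$, and conclude from $j\le d_H\le r_H$. Your treatment of limit stages and the $j=0$ case is slightly more explicit than the paper's, but the method is the same.
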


\begin{rem} The condition $d_H \le r_H$ is automatically
satisfied if no cells of type $(H)$ occur in $(X,A)$.
\end{rem}

\begin{proof}[Proof of Lemma \ref{factorization}]
The proof proceeds by induction on the equivariant cells which
are attached to $A$ to form $X$. 
The inductive step is
reduced to solving an equivariant lifting problem of the kind
$$
\xymatrix{
G/H \times S^{j-1} \ar[r] \ar[d] & Y \ar[d]^f \\
G/H \times D^j \ar[r] \ar@{..>}[ur] & Z\, ,\\
}
$$
where the horizontal maps are allowed to vary in their
equivariant homotopy class.

Now, a $G$-map $G/H \times U \to Z$
when $U$ has a trivial action is the same thing as specifying
a map $U \to Z^H$. This means the lifting problem
reduces to an unequivariant one of the form
$$
\xymatrix{
S^{j-1} \ar[r] \ar[d] & Y^H \ar[d]^{f^H} \\
D^j \ar[r]\ar@{..>}[ur]  & Z^H \\
}
$$
The latter lift exists because $f^H$ is $r_H$-connected and $j \le r_H$.
\end{proof}

\begin{cor}\label{factorize-homotopy} Consider the lifting problem
$$
\xymatrix{
A \ar[r] \ar[d] & Y \ar[d] \\
X \ar[r]\ar@{..>}[ur] & Z
}
$$
of $G$-spaces in which 
\begin{itemize}
\item $Y \to Z$ is a map 
with connectivity function $r_\bullet$,
\item $Y$ is cofibrant,
\item $(X,A)$ is
a relative $G$-cell complex with dimension function $d_\bullet$,
\item $d_H \le r_H$ for each subgroup $H \subset G$.
\end{itemize}
Then there is a $G$-map $X \to Y$ making the top triangle of
the diagram commute and the bottom triangle homotopy commute.
\end{cor}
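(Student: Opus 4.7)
My plan is to reduce Corollary \ref{factorize-homotopy} to Lemma \ref{factorization} by replacing $Y \to Z$ with an honest fibration. Using the Quillen model structure on $T(G)$, I would factor $Y \to Z$ as
\[
Y \overset{i}{\longrightarrow} \hat Y \overset{p}{\longrightarrow} Z
\]
with $i$ an acyclic cofibration and $p$ a fibration. Taking $H$-fixed points, $i^H$ remains a weak equivalence, so $p$ inherits the connectivity function $r_\bullet$ of the original map. Applying Lemma \ref{factorization} to $p$ with the given cell-complex data then yields an equivariant lift $h \colon X \to \hat Y$ with $h|_A = i \circ (A \to Y)$ and $p \circ h = (X \to Z)$.

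Next I would produce a $G$-equivariant retraction $r \colon \hat Y \to Y$ of $i$ together with an equivariant homotopy $i \circ r \simeq \mathrm{id}_{\hat Y}$. Every $G$-space is fibrant in $T(G)$, because the terminal map restricts to a Serre fibration on each fixed-point space; hence the lifting property of the acyclic cofibration $i$ against the fibration $Y \to *$ produces an equivariant $r$ with $r \circ i = \mathrm{id}_Y$. Moreover, since $Y$ is cofibrant, $\hat Y$ is cofibrant (the composite $\emptyset \to Y \overset{i}{\to} \hat Y$ is a cofibration) and fibrant, while $Y$ itself is cofibrant-fibrant. Thus $i$ is a weak equivalence between cofibrant-fibrant objects, and Whitehead's theorem in the model category $T(G)$ then guarantees $i$ is a genuine $G$-homotopy equivalence; any retraction of such a map is automatically a two-sided homotopy inverse. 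In particular, there exists a $G$-homotopy $H \colon \hat Y \times I \to \hat Y$ from $i \circ r$ to $\mathrm{id}_{\hat Y}$.

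I would then declare $r \circ h \colon X \to Y$ to be the desired lift. The upper triangle commutes strictly, since $(r \circ h)|_A = r \circ i \circ (A \to Y) = (A \to Y)$. For the lower triangle, the $G$-homotopy $p \circ H \circ (h \times \mathrm{id}_I) \colon X \times I \to Z$ runs from $(Y \to Z) \circ (r \circ h) = p \circ i \circ r \circ h$ to $p \circ h = (X \to Z)$, providing the required homotopy commutativity.

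The main obstacle, such as it is, lies in producing the equivariant homotopy $i \circ r \simeq \mathrm{id}_{\hat Y}$; this is precisely where the cofibrancy of $Y$ is used, since without it one cannot conclude that $\hat Y$ is cofibrant-fibrant, and the appeal to Whitehead's theorem in $T(G)$ would collapse.
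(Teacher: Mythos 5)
Your argument is correct and follows the paper's own proof exactly: factor $Y\to Z$ as an acyclic cofibration followed by a fibration, apply Lemma \ref{factorization} to the fibration, and then retract back to $Y$ using the fact that every object of $T(G)$ is fibrant. You additionally spell out (via the Whitehead theorem for cofibrant--fibrant objects) why the bottom triangle commutes up to $G$-homotopy, a point the paper leaves implicit, and your identification of where the cofibrancy of $Y$ enters is the right one.
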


\begin{proof} Factorize the map $Y \to Z$ as
$$
Y \to Y^\text{c} \to Z
$$
in which the map $Y \to Y^\text{c}$ is a cofibration and a weak equivalence
and the map  $Y^\text{c} \to Z$ is a fibration. Apply Lemma 
\ref{factorization}
to the diagram with $Y^\text{c}$ in place of $Y$. To get a map
$X \to Y^\text{c}$ making the diagram commute. 

Since every object is fibrant, the acyclic cofibration
$Y \to Y^{\text{c}}$ is a retract; let $r \: Y^{\text{c}} \to Y$
be a retraction.  Let $f\: X \to Y$ be the map $X \to Y^{\text{c}}$
followed by the retraction. Then $f$ satisfies
the conclusion stated in the corollary.
\end{proof}

\subsection*{Fiberwise $G$-spaces}
Fix a $G$-space $B$. A {\it $G$-space over $B$} is a
$G$-space $X$ equipped with $G$-map $X \to B$, usually denoted
$p_X$. A morphism
$X\to Y$ of $G$-spaces over $B$ is a $G$-map which commutes with
the structure maps $p_X$ and $p_{Y}$. Let
$$
T(B;G)
$$
be the category of $G$-spaces over $B$.

We also have a ``retractive'' version of this category, denoted
$$
R(B;G)\, .
$$
An object of the latter consists of a $G$-space $X$ and
maps $p_X\: X \to B$, $s_X \: B \to X$ such that $p_Xs_X = \text{id}_B$.
A morphism $X\to Y$ is an equivariant map compatible with both structure 
maps.

In either of these categories, a morphism $X \to Y$ is said to be a
weak equivalence/cofibration/fibration if it is so when
considered as a morphism of $T(G)$
by means of the forgetful functor. With respect to these
definitions, $T(B;G)$ and $R(B;G)$ are Quillen model
categories. 

An object $X$ in either of these categories is said
to be {\it $r_\bullet$-connected} if the structure map
$X \to B$ is $(r_\bullet+1)$-connected. A morphism
is said to be $r_\bullet$-connected if the underlying map
of $T(G)$ is.

The category $R(B;G)$ has {\it internal smash products}, constructed
as follows: let $X,Y \in R(B;G)$ be objects. Then
$$
X\smsh_B Y \in R(B;G)
$$
is the object given by the pushout of the diagram
$$
\begin{CD}  
         B  @<<<  X \cup_B Y @> \subset >>       X \times_B Y
\end{CD}
$$
where $X\times_B Y$ is the fiber product of $X$ and $Y$. 

Since $R(B;G)$ is a model category, one can form homotopy
classes of morphisms. If $X,Y\in  R(B;G)$, we let
$$
[X,Y]_{R(B;G)}
$$
denote the set of homotopy classes of morphisms. Recall 
the definition  requires us to replace $X$ by its cofibrant approximation
and $Y$ by its fibrant approximation.

\section{The proof of Theorem \ref{cohom} \label{proof-cohom}}

\subsection*{Unreduced fiberwise suspension}
Let $E\in T(B;G)$ be an object. 
The {\it unreduced fiberwise suspension} of $E$ over $B$ 
is the object $S_B E \in T(B;G)$ given by the double mapping
cylinder
$$
S_B E := B\times 0 \cup E\times [0,1] \cup B\times 1\, .
$$
The two evident inclusions $s_-,s_+\: B \to S_BE$ are 
morphisms of $T(B;G)$. 
Using $s_-$, we will consider $S_B E$ as an object of $R(B;G)$.

\subsection*{Obstruction to sectioning}
Let $$B^+$$ denote $B \amalg B$ considered as an object of
$R(B;G)$ using the left summand to define a section.
Then
$$
s := s_- \amalg s_+\: B^+\to S_B E
$$
is a morphism of $R(B;G)$. We consider the associated
homotopy class
$$
[s] \in [B^+,S_B E]_{R(B;G)} \, .
$$

The following proposition is an equivariant version of results of
Larmore (\cite[th.\ 4.2-4.3]{Larmore}; see also
 \cite[prop.\ 3.1]{klein-williams}).

\begin{prop} \label{Larmore} Assume $E\in T(B;G)$ is fibrant. If
$E \to B$ admits an equivariant section, then $[s]$ is
trivial. 

Conversely, assume 
\begin{itemize}
\item $[s]$ is trivial,
\item $B$ is a $G$-cell complex with dimension 
function $b_\bullet$, 
\item the object $E \in T(B;G)$ is $r_\bullet$-connected, and
\item  $b_\bullet \le 2r_\bullet + 1$.
\end{itemize}
Then
$E\to B$ admits an equivariant section.
\end{prop}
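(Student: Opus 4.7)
The plan is to prove the two directions separately. The forward direction is an explicit construction; the converse reduces, via an equivariant Freudenthal-style comparison, to a lifting problem that is then handled by Lemma \ref{factorization}.

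For the forward direction, suppose there is an equivariant section $\sigma\colon B\to E$. Define
\[
H\colon B\times[0,1]\to S_BE,\qquad H(b,t):=[\sigma(b),t]\in E\times[0,1]\subset S_BE.
\]
Each $H(-,t)$ is an equivariant section of $p_{S_BE}$, with $H(-,0)=s_-$ and $H(-,1)=s_+$. Read in $R(B;G)$, this exhibits $s=s_-\amalg s_+$ as homotopic to the basepoint morphism $s_-\amalg s_-$, so $[s]=0$.

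For the converse, a null-homotopy witnessing $[s]=0$ is precisely an equivariant section $\tilde H$ of the fiberwise path space $P\to B$ where
\[
P_b\,:=\,\bigl\{\gamma\colon[0,1]\to(S_BE)_b\ \bigl|\ \gamma(0)=s_-(b),\ \gamma(1)=s_+(b)\bigr\}.
\]
I would then introduce the fiberwise equivariant comparison
\[
\phi\colon E\to P\quad\text{over $B$},\qquad \phi(e)(t):=[e,t]\in E\times[0,1]\subset S_BE,
\]
and replace $\phi$ by a fibration in $T(B;G)$ via Quillen factorization, without affecting its connectivity function. The key input is an equivariant Freudenthal estimate: fixed-point formation commutes with $S_B(-)$ and with the fiberwise path space, so for each $H\subset G$ the map $\phi^H$ restricts on every fiber over $B^H$ to the unreduced Freudenthal map $E_b^H\to\Omega\Sigma E_b^H$. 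Since $E_b^H$ is $r_H$-connected, this fiber map is $(2r_H+1)$-connected, and comparing the long exact sequences of the fibrations $E^H\to B^H$ and $P^H\to B^H$ by the five lemma promotes this to the statement that $\phi^H$ is itself $(2r_H+1)$-connected as a map of spaces. In other words, $\phi$ has connectivity function $2r_\bullet+1$.

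The hypothesis $b_\bullet\le 2r_\bullet+1$ is precisely what is needed to apply Lemma \ref{factorization} to the square
\[
\xymatrix{\emptyset\ar[r]\ar[d] & E\ar[d]^{\phi}\\ B\ar[r]^-{\tilde H}\ar@{..>}[ur] & P}
\]
producing an equivariant map $\sigma\colon B\to E$ with $\phi\circ\sigma=\tilde H$. Because $\phi$ is a map over $B$ and $\tilde H$ is a section of $p_P$, the computation $p_E\circ\sigma=p_P\circ\phi\circ\sigma=p_P\circ\tilde H=\mathrm{id}_B$ is automatic, so $\sigma$ is the desired equivariant section of $E\to B$. The main technical difficulty will be the equivariant Freudenthal step: one must carefully track the interaction of fixed-point formation with unreduced suspension and the fiberwise path space (attending to the two non-homotopic cone-point basepoints and the degenerate cases when some $E^H$ is empty), and verify that the fiberwise connectivity estimate promotes to connectivity of $\phi^H$ as a map of spaces in the sense needed by Lemma \ref{factorization}.
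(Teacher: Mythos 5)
Your proposal is correct and follows essentially the same route as the paper: the forward direction via the fiberwise suspension of the section, and the converse by comparing $E$ with the homotopy pullback of $B \xrightarrow{s_-} S_BE \xleftarrow{s_+} B$, establishing $(2r_\bullet+1)$-connectivity of that comparison on $H$-fixed points (your fiberwise Freudenthal estimate plus the five lemma is exactly the Blakers--Massey step the paper uses), and then lifting with the factorization lemma. The only point to spell out is the endgame: after replacing $\phi$ by a fibration the lift lands in the replacement, so you must retract back to $E$ over $B$ using the fibrancy hypothesis on $E$ — which is precisely how the paper converts the homotopy section into a strict one.
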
 

\begin{proof} Let $\sigma\: B \to E$ be a section. Apply
the functor $S_B$ and note $S_BB = B \times [0,1]$.
We then get a map
$$
S_B \sigma \: B\times [0,1] \to S_B E
$$
which gives a homotopy from $s_-$ to $s_+$ through morphisms
of $T(B;G)$. This is the same thing as establishing the triviality
of $[s]$.

Conversely, 
the diagram
$$
\xymatrix{
E \ar[r]\ar[d] & B\ar[d]^{s_+} \\
B \ar[r]_{s_-} & S_B E
}
$$
is preferred homotopy commutative in the category $T(B;G)$. 
As a diagram of $G$-spaces it is a homotopy pushout.
Let $H \subset G$ be a subgroup. Taking $H$-fixed points, we obtain
a homotopy pushout
$$
\xymatrix{
E^H \ar[r]\ar[d] & B^H\ar[d]^{s^H_+} \\
B^H \ar[r]_{s^H_-} & S_{B^H} E^H
}
$$
in the category $R(B^H;e)$ where $e$ is the trivial group.
Since $E$ is an $r_\bullet$-connected object, the map
$E^H \to B^H$ is $(r_H + 1)$-connected.

By the Blakers-Massey theorem (see e.g., \cite[p.\ 309]{Good_calc2}), 
the second diagram is 
$(2r_H+1)$-cartesian. Consequently, the first diagram
is $(2r_\bullet + 1)$-cartesian. Let $P$ denote the
homotopy inverse limit of the diagram
$$
\begin{CD}
B @> s_- >> S_B E @< s_+ << B\, .
\end{CD}
$$
Then we conclude the map $E \to P$ is $(2r_\bullet + 1)$-cartesian.
If we assume $[s] = 0$, then the map $P \to B$ admits a section
up to homotopy (using the universal property of the homotopy pullback). 
By the assumptions on $B$ and Corollary \ref{factorize-homotopy}, 
the $G$-map  $E\to B$ admits a 
section up to homotopy. Since $E$ is fibrant,
this homotopy section can be converted into a strict section.
\end{proof}

\subsection*{Naive stabilization}
The {\it reduced fiberwise suspension} $\Sigma_B E$ of an object
$E\in R(B;G)$ is given by considering $E$ as an object
of $T(B;G)$, taking its unreduced fiberwise suspension 
$S_B E$ and taking the pushout of the diagram
$$
B \leftarrow S_B B \to S_B E
$$
where  $S_B B \to S_B E$ arises by applying
$S_B$ to the structure map $B\to E$.

A {\it naive parametrized $G$-spectrum} ${\cal E}$ is a collection of objects
$$
{\cal E}_n \in R(B;G)
$$
equipped with maps $\Sigma_B {\cal E}_n \to {\cal E}_{n+1}$.

\begin{ex} Let $Y \in R(B;G)$ be an object. 
Its naive parametrized suspension spectrum $\Sigma_B^\infty Y$
has $n$-th object $\Sigma^n_B Y$, the $n$-th iterated fiberwise
suspension of $Y$.
\end{ex}

\begin{defn} Let $X \in T(B;G)$ be an object.
The zeroth {\it cohomology} of $X$ with coefficients in ${\cal E}$
is the abelian group given by 
$$
H^0_G(X;{\cal E}) \,\, := \,\, \colim_{n\to \infty}
[\Sigma^n_B X^+,{\cal E}_n]_{R(B;G)}
$$
where $X^+ = X\amalg B$ and the maps in the 
colimit  arise from the structure maps of ${\cal E}$.
\end{defn}

\begin{rem} Assuming the maps ${\cal E_n} \to B$ are fibrations,
one can take the pullbacks $f^*{\cal E_n} \to X$. These form
a naive $G$-spectrum over $X$, 
and an unraveling of the definitions gives
$$
H^0_G(X;f^*{\cal E}) = H^0_G(X;{\cal E}) \, .
$$
\end{rem}

\begin{defn} Let $X, E\in T(B;G)$ be objects
with $E$ fibrant and $X$ cofibrant. Let
$f\: X \to B$ be the structure map. 
Let
$$
e(f,E) \in H^0_G(X;\Sigma_B^\infty S_B E)
$$
be the class defined by the map 
$$
\begin{CD}
X^+ @> f^+ >> B^+ @> s >> S_B E\, .
\end{CD}
$$
\end{defn}

\begin{prop} Let $X,E$ and $f$ be as above. If
$E \to B$ admits an equivariant section along $f$, 
then $e(f,E)$ is trivial.

Conversely, assume
\begin{itemize}
\item $e(f,E)$ is trivial,
\item $X$ is a $G$-cell complex with dimension 
function $k_\bullet$, 
\item $E \in T(B;G)$ is $r_\bullet$-connected, and
\item  $k_\bullet \le 2r_\bullet + 1$.
\end{itemize}
Then $E \to B$ admits an equivariant section along $f$.
\end{prop}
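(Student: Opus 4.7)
The proposition is the relative, stable version of Proposition \ref{Larmore}, and my approach is to pull back along $f$ and reduce to that proposition over $X$. Form $f^*E \in T(X;G)$; since fibrations and fiberwise connectivity are preserved by pullback, $f^*E$ is fibrant and $r_\bullet$-connected. The unreduced fiberwise suspension commutes with pullback, $f^*(S_B E) = S_X(f^*E)$, because both sides are the double mapping cylinder of $f^*E$ over $X$. Combined with the identification $H^0_G(X; f^*\mathcal{E}) = H^0_G(X; \mathcal{E})$ from the remark preceding the proposition, this yields an isomorphism
\[
H^0_G(X; \Sigma_B^\infty S_B E) \,\cong\, H^0_G(X; \Sigma_X^\infty S_X f^*E)
\]
under which $e(f, E)$ corresponds to the stabilization of the unstable class $[s_{f^*E}] \in [X^+, S_X f^*E]_{R(X;G)}$ associated to $f^*E \to X$ in Proposition \ref{Larmore}. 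Likewise, a section of $E \to B$ along $f$ is precisely a $T(X;G)$-section of $f^*E \to X$.

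With these identifications in hand, the forward direction is immediate from the forward half of Proposition \ref{Larmore} applied to $f^*E$: a section trivializes $[s_{f^*E}]$ unstably, and hence trivializes its stable image $e(f,E)$.

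For the converse, the task is to pass from the stable vanishing of $e(f,E)$ to the unstable vanishing of $[s_{f^*E}]$, after which Proposition \ref{Larmore} furnishes a section of $f^*E \to X$ and hence of $E \to B$ along $f$. This is a Freudenthal destabilization in $R(X;G)$: the object $S_X(f^*E)$ is $(r_\bullet+1)$-connected (fiberwise suspension raises connectivity by one), and since $X^+$ is a $G$-cell complex of cellular dimension $k_\bullet$ with $k_\bullet \le 2r_\bullet + 1 < 2(r_\bullet+1)$, the equivariant Blakers--Massey theorem (applied cellwise, exactly as in the proof of Proposition \ref{Larmore}) shows that each iterated fiberwise suspension map is injective, whence so is the stabilization map into $H^0_G$. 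Thus $e(f,E) = 0$ forces $[s_{f^*E}] = 0$. The main technical obstacle is this destabilization step, but it is controlled by the metastable hypothesis $k_\bullet \le 2r_\bullet + 1$ and requires no ingredient beyond the Blakers--Massey input already used in Proposition \ref{Larmore}.
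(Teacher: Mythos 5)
Your proposal is correct and follows essentially the same route as the paper: reduce to Proposition \ref{Larmore} (the paper does this implicitly over $B$, you do it explicitly by pulling back to $X$, which is fine since $f^*$ preserves fibrations, fiberwise connectivity, and unreduced fiberwise suspension), and then destabilize by showing the iterated fiberwise suspension maps are injective in the range $k_\bullet \le 2r_\bullet+1$ via the fiberwise Freudenthal/Blakers--Massey connectivity estimate together with Corollary \ref{factorize-homotopy}. The only difference is presentational.
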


\begin{proof}  By Proposition \ref{Larmore}, it will be enough to prove
the maps
$$
\Sigma_B \: [\Sigma^n_B X^+,\Sigma^n_B S_B E]_{R(B;G)} \to 
[\Sigma^{n+1}_B X^+,\Sigma^{n+1}_B S_B E]_{R(B;G)}
$$
are isomorphisms in the stated range. We will do this when
$n = 0$. The case $n >0$ is similar.

We have a map
$$
E \to \Omega_B \Sigma_B E
$$
which is adjoint to the identity. By Corollary \ref{factorize-homotopy}, 
it will be enough to show this morphism is  
$2r_\bullet + 1$-connected. Let $H \subset G$ be a subgroup
and consider the map
$$
E^H \to \Omega_{B^H} \Sigma_{B^H} E^H
$$
of $R(B^H;e)$. If $b\in B^H$ is any point, we have an induced map of
fibers 
$$
E_b^H \to \Omega\Sigma E_b^H \, .
$$
Since $E_b^H$ is $r_H$-connected, the Freudenthal suspension
implies the last map is  $(2r_H+1)$-connected. We infer that the map
$E^H \to \Omega_B \Sigma_B E^H$ is  $(2r_H+1)$-connected, which is 
what we needed to show. 
\end{proof}

\subsection*{Proof of Theorem \ref{cohom}}

\begin{lem} \label{connectivity}
The map $(N-Q) \to N$ is  $(i_Q)_{!}\cd_H(i_Q)-1)$-connected.
\end{lem}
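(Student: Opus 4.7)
The plan is to reduce the equivariant connectivity statement to the classical unequivariant fact about complements of smooth submanifolds, applied fixed point set by fixed point set. First I would fix a subgroup $H \subset G$ and observe that $(N-Q)^H = N^H - Q^H$, so that the $H$-fixed points of the map in question are just the inclusion of the complement. By the Bredon theorem already invoked when the dimension function was defined, $N^H$ and $Q^H$ are smooth (possibly with components of varying dimension) manifolds, and $i_Q^H\colon Q^H \hookrightarrow N^H$ is a smooth closed embedding whose normal bundle at $x \in Q^H$ has fiber dimension $\cd_H(i_Q)(x)$.

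Next I would appeal to the standard non-equivariant fact: if $W \hookrightarrow M$ is a smooth closed embedding of manifolds whose codimension at every point of a given component $W'$ of $W$ is at least $c$, then $M - W \hookrightarrow M$ is $(c-1)$-connected above the component of $M$ into which $W'$ maps. This is proved by general position, in that any compact smooth pair $(D^j, S^{j-1}) \to (M, M - W)$ with $j \le c-1$ can be smoothed and then homotoped rel boundary off $W$ by transversality, so the relevant relative homotopy groups vanish.

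To finish, I would translate this component-wise codimension into the pushforward indexing function. For $y \in N^H$, the connectivity of $(N-Q)^H \to N^H$ over the component of $y$ is controlled by the minimum of $\cd_H(i_Q)(x)$ as $[x]$ ranges over those components of $Q^H$ whose image in $N$ shares a path component with $y$; if no such $[x]$ exists, the map is a local homeomorphism near $y$ and the connectivity is infinite. But by the very definition of the pushforward, this infimum is exactly $(i_Q)_{!}\cd_H(i_Q)(y)$, yielding the bound claimed by the lemma.

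I do not anticipate a real obstacle here: the geometric input is entirely the classical transversality argument applied to the pair $(N^H, Q^H)$, and the only mildly fiddly step is matching the ``minimum codimension over components of $Q^H$ landing in a given component of $N$'' against the formula for $(i_Q)_{!}$. Everything else is bookkeeping carried out one subgroup at a time.
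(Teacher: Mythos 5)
Your proposal is correct and follows essentially the same route as the paper: reduce to the inclusion $N^H - Q^H \to N^H$ for each subgroup $H$, then kill the relative homotopy groups by putting maps of pairs $(D^j,S^{j-1})$ (or $(S^j,\emptyset)$) into general position with respect to $Q^H$, so that the bound $j < \cd_H(i_Q)$ on each relevant component of $Q^H$ translates, via the infimum defining $(i_Q)_!$, into the stated connectivity. Your componentwise bookkeeping of the pushforward matches the paper's argument, which simply phrases the same infimum as ``$j < \cd_H(i_Q)(y)$ for all $y \in Q^H$.''
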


\begin{proof} Let $H \subset G$ be a subgroup. Then $(N-Q)^H = N^H - Q^H$,
and we need to compute the connectivity of the inclusion
$$
N^H - Q^H \to N^H \, .
$$
This will be done using transversality.

Consider a map of pairs 
$$
\gamma\:(K,A) \to (N^H,N^H-Q^H)
$$
$(K,A) = (D^j,S^{j-1})$ or $(S^j,\emptyset)$.
We can assume $\gamma$ is transverse to $Q^H$.
If $y \in \gamma(K) \cap Q^H$, then it must be the case
$j \ge \cd_H(i_Q)_H(y)$. Therefore, $\gamma(K)$
is disjoint from $Q^H$ whenever $j <\cd_H(i_Q)_H(y)$
for all $y\in Q^H$. This is equivalent to requiring
$j < (i_Q)_{!}\cd_H(i_Q)$, so the conclusion follows.
\end{proof}

Let $$(N-Q) \to E \to N$$ be the effect of factorizing
$N-Q \to N$ as an acyclic cofibration followed by a fibration.
By Lemma \ref{connectivity}, $E \in T(N;G)$ 
is  an $(i_Q)_{!}\cd_H(i_Q)-2)$-connected object.

Since $N-Q$ is cofibrant, it will suffice to show 
$E \to N$ admits a section along $f$. 

We set ${\cal E}(i_Q)$ equal to the naive fiberwise suspension spectrum
$$
\Sigma^\infty_N S_N E 
$$
and
$$
e_G(f) := e(f,E) \in H^0_G(P;{\cal E}(i_Q))\, .
$$
By the first part of Proposition \ref{Larmore}, 
if $E$ admits a section along $f$, then $e_G(f)$ is trivial.

Conversely, assume $e_G(f)$ is trivial.
Then 
$$
e(\id_P,f^*E) \in H^0_G(P;f^*{\cal E}(i_Q))
$$
is also trivial. One easily checks $f^*E$ is an 
$(f^*(i_Q)_{!}\cd_\bullet(i_Q)-2)$-connected object.
By the second part of Proposition \ref{Larmore}, 
the fibration $f^*E \to P$ admits a section
when
$$
p_\bullet \le 2f^*(i_Q)_{!}\cd_\bullet(i_Q) - 3.
$$
This completes the proof of Theorem \ref{cohom}.

\section{Naive versus Equivariant stabilization \label{naive-versus-equi}}

\subsection*{The unfibered case}
If $Y \in R(G) = R(*;G)$ is a cofibrant object, we define
$$
Q_GY  = \colim_{V} \Omega^V\Sigma^V Y\, ,
$$
where $V$ ranges over the finite dimensional subrepresentations of a
complete $G$-universe ${\cal U}$ partially ordered
with respect to inclusion, and
$\Omega^V \Sigma^V Y$ is the space of
unequivariant based maps $S^V \to S^V\smsh Y$, where
$S^V$ is the one point compactification of $V$. 
This is a $G$-space by conjugating maps by 
group elements.

Consider the natural $G$-map
$$
QY \to Q_G Y\, .
$$
\begin{prop} \label{universe_change}
Assume $Y$ has connectivity function $r_\bullet$.
Then the map $QY \to Q_G Y$ is $s_\bullet$-connected,
where
$$
s_H = \inf_{K \subsetneq H} r_K\,  .
$$
\end{prop}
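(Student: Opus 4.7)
The plan is to analyze the map of $H$-fixed points one subgroup at a time: for each $H\subseteq G$ I will show that $(QY)^H\to (Q_GY)^H$ is $s_H$-connected. The key input will be the Segal--tom Dieck splitting of the $H$-fixed points of an equivariant suspension spectrum.

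First I identify the two sides on $H$-fixed points. Because the suspensions and loops appearing in the definition of $QY$ are taken over trivial representations, $\Omega^n\Sigma^n$ commutes with the functor $(-)^H$, giving $(QY)^H \simeq Q(Y^H)$ and hence $\pi_k((QY)^H)\cong \pi_k^s(Y^H)$. For the target I invoke the Segal--tom Dieck splitting (compare \cite[Ch.\ V, \S11]{May}), which furnishes an identification
\[
\pi_k((Q_GY)^H)\,\,\cong\,\, \bigoplus_{(K)\le H}\,\pi_k^s\bigl(EW_H(K)_+\smsh_{W_H(K)} Y^K\bigr),
\]
with $(K)$ ranging over $H$-conjugacy classes of subgroups of $H$ and $W_H(K)=N_H(K)/K$. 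The summand corresponding to $(K)=(H)$ is $\pi_k^s(Y^H)$, since $W_H(H)$ is trivial and $EW_H(H)_+\smsh Y^H = Y^H$.

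Next I must check that the map $(QY)^H\to (Q_GY)^H$ corresponds, under this splitting, to the inclusion of the $(K)=(H)$ summand. This is a naturality assertion: the change--of--universe map at the level of $H$-fixed points is built from the comparison between a trivial and a complete universe and projects onto the top stratum of the tom Dieck decomposition. This identification is the technical heart of the argument and is the place where the most care is required; once it is in hand the connectivity estimate reduces to pure bookkeeping.

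To finish I bound the connectivity of the remaining summands. For each proper $K\subsetneq H$ the space $EW_H(K)_+\smsh_{W_H(K)} Y^K$ inherits the connectivity of $Y^K$: $EW_H(K)$ is contractible and $W_H(K)$ acts freely on $EW_H(K)\times Y^K$, so the underlying homotopy type is the Borel construction of $Y^K$, which is at least $r_K$-connected. Its stable homotopy therefore vanishes through degree $r_K$. Because the map on $\pi_k$ is the inclusion of one summand into a direct sum, it is always injective, and it is bijective whenever the remaining summands all vanish, namely for $k\le \inf_{K\subsetneq H} r_K = s_H$. This proves that $(QY)^H\to (Q_GY)^H$ is $s_H$-connected, as required.
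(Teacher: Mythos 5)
Your argument is essentially the paper's: both identify $(QY)^H$ with $Q(Y^H)$, apply the tom Dieck splitting to $(Q_GY)^H$, recognize the change-of-universe map as the inclusion of the $(K)=(H)$ summand, and bound the remaining summands by noting that $EW(K)_+\smsh_{W(K)}Y^K$ is $r_K$-connected. The one step you flag but do not verify --- that the comparison map really is the inclusion of the top summand --- is asserted in exactly the same way in the paper, so nothing is missing relative to the source.
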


\begin{proof} Let $H \subset G$ be a subgroup.
We must show the map of fixed points
$$
Q(Y^H) = (QY)^H \to  (Q_G Y)^H \, .
$$
is $s_H$-connected.
By the tom Dieck splitting (\cite[p.\ 203, Th.\ 1.3]{May},
\cite[th.\ 7.7]{tD}), 
$$
(Q_GY)^H \,\, \simeq \,\, \prod_{(K)} QEW(K)_+\smsh_{W(K)}Y^K\, ,
$$
where $(K)$ varies over the conjugacy classes of subgroups
of $H$ and $W(K)$ denotes the Weyl group. The factor
corresponding to $(K) = (H)$ gives the inclusion
$Q(Y^H) \to (Q_G Y)^H$. As  $QEW(K)_+\smsh_{W(K)}Y^H$
is $r_K$-connected, 
it follows the inclusion
is
$(\inf_{K \subsetneq H} r_K)$-connected.
\end{proof}

\subsection*{Equivariant stabilization}
Let $V$ be a finite dimensional $G$-representation equipped with 
invariant inner product. We let $D(V)$
be its unit disk and $S(V)$ its unit sphere. 

Let $X \in T(B;G)$ be an object.
The {\it unreduced
$V$-suspension} of $X$ over $B$ is the object
$S^V_B X$  given by
$$
S(V) \times B \cup_{S(V) \times E} D(V) \times Y \, .
$$
Note the case of the trivial representation $V = \Bbb R$
recovers $S_B X$.

If $Y \in R(B;G)$ is an object, then
a reduced version of the construction is given by
$$
\Sigma^V_B Y =  B \cup_{D(V) \times B}  S^V_B Y \, .
$$
The {\it fiberwise $V$-loops} of $Y \in R(B;G)$ is the 
object
$$
\Omega^V_B E
$$
given by the space of pairs $(b,\lambda)$ in
which $b\in B$ and $\lambda \: S^V \to p^{-1}(b)$ is a based map.
The action of $g\in G$ on $(b,\lambda)$ is given by 
$(gb,g\lambda)$. 

Then $(\Sigma_B^V,\Omega_B^V)$ is an adjoint functor pair.

\begin{defn} For an object $Y \in R(B;G)$, define
$$
Q_B^G Y \,\, := \colim_{V} \Omega_B^V \Sigma_B^V Y \, ,
$$
where the colimit is indexed over the finite dimensional subrepresentations of
a complete $G$-universe ${\cal U}$.
\end{defn}

\begin{prop} \label{fib-equi-versus-naive} Assume $Y \in R(B;G)$ is fibrant and cofibrant,
with connectivity function $r_\bullet$. Then
$$
Q_B Y \to Q_B^G Y
$$
is $s_\bullet$-connected, where
$$
s_H := \inf_{K \subsetneq H} r_K\, .
$$
\end{prop}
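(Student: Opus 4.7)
The plan is to reduce the fibered statement to Proposition \ref{universe_change} by passing to fibers over points of $B$. Since $Y$ is fibrant, $Y \to B$ is a $G$-fibration, and this property is preserved under the fiberwise loops and suspensions; hence both $Q_B Y \to B$ and $Q_B^G Y \to B$ are fibrations, with fibers $(Q_B Y)_b \simeq QY_b$ and $(Q_B^G Y)_b \simeq Q_G Y_b$ at each $b \in B$. Because the $G$-action on $\Omega^V$ is by conjugation, an $H$-fixed based map $S^V \to (\Sigma^V_B Y)_b$ is precisely an $H$-equivariant based map; consequently the fixed-point functor $(-)^H$ commutes with these fiberwise constructions, yielding Serre fibrations $(Q_B Y)^H \to B^H$ and $(Q_B^G Y)^H \to B^H$ whose fibers over $b \in B^H$ are $Q(Y_b^H)$ and $(Q_G Y_b)^H$ respectively.

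Fix a subgroup $H \subset G$ and a point $b \in B^H$. The hypothesis that $Y$ has connectivity function $r_\bullet$ means $Y^K \to B^K$ is $(r_K+1)$-connected for every $K$, so for each $K \subset H$ the fiber $Y_b^K$ is $r_K(b)$-connected. Applying Proposition \ref{universe_change} to the cofibrant pointed $H$-space $Y_b$, the natural map
$$
Q(Y_b^H) \to (Q_G Y_b)^H
$$
is $s_H(b)$-connected with $s_H(b) = \inf_{K \subsetneq H} r_K(b)$, independent of further choices at $b$.

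To conclude, $(Q_B Y)^H \to (Q_B^G Y)^H$ is a map of Serre fibrations over $B^H$ inducing an $s_H$-connected map on fibers over every point; the five lemma applied componentwise to the long exact sequences of homotopy groups then shows the total map is itself $s_H$-connected, giving the desired conclusion. The main technical obstacle is justifying that the filtered colimit defining $Q_B^G$ commutes with both restriction to fibers and passage to $H$-fixed points; this relies on the fibrancy and cofibrancy hypotheses on $Y$, the filtered nature of the colimit indexed over finite dimensional subrepresentations of $\mathcal{U}$, and the good behavior of compactly generated $G$-spaces under these operations.
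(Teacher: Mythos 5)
Your proposal is correct and follows essentially the same route as the paper: pass to $H$-fixed points, identify the map of fibers over $b\in B^H$ with $Q(Y_b^H)\to (Q_GY_b)^H$, apply Proposition \ref{universe_change} (for the subgroup $H$, using that a complete $G$-universe is a complete $H$-universe), and conclude by comparing the fibrations over $B^H$. The extra care you take with commuting the colimit past fibers and fixed points is exactly the implicit justification the paper relies on.
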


\begin{proof} Let $H  \subset G$ be a subgroup. Then the $H$-fixed points
of $Q_B Y$ is $Q_{B^H} Y^H$. Consider the evident map
$$
Q_{B^H} Y^H \to (Q_B^G Y)^H \, .
$$
Let $b \in B^H$ be a point. Then the associated map of homotopy fibers
at $b$ is identified with
$$
Q Y_b^H \to (Q_G Y_b)^H\, ,
$$
where $Y_b$ is the fiber of $Y^H \to B^H$ at $b$.
By Proposition \ref{universe_change}, the map of homotopy fibers is $s_H$-connected.
We conclude $Q_B Y^H \to (Q_B^G Y)^H$ is
also $s_H$-connected.
\end{proof}

\section{Parametrized $G$-spectra over a complete universe \label{fibered-spectra}}
Let ${\cal U}$ be a complete $G$-universe. 
A {\it (parametrized) $G$-spectrum} ${\cal E}$ over $B$ indexed on ${\cal U}$
is a collection of objects
$$
{\cal E}_V \in R(B;G)
$$
indexed over the finite dimensional subrepresentations $V$ of ${\cal U}$
together with maps
$$
\Sigma_B^{V^\perp} {\cal E}_V \to {\cal E}_{W}
$$
for $V \subset W$, where $V^\perp$ is the orthogonal complement
of $V$ in $W$.

\begin{ex} Let $X \in R(B;G)$ be an object. The fiberwise equivariant
suspension spectrum of $X$, denoted $\Sigma^{\infty,G}_B X$ has $V$-th space
$$
Q_B^G(\Sigma^V_B X) \, .
$$
\end{ex}

These give rise to unreduced $\RO(G)$-graded cohomology
theories on $T(B;G)$.
In order to get the details of the construction right, it is
helpful to know a Quillen model structure is lurking in
the background. For this exposition, it will
suffice to explain what the weak equivalences, fibrant
and cofibrant objects are in this model structure. 
The reference for this material is the book \cite{May-Sig}.

One says ${\cal E}$ is {\it fibrant} if each of the adjoint maps
${\cal E}_W \to \Omega^W_B {\cal E}_{V\oplus W}$ is a weak 
equivalence of $R(B;G)$ and moreover, each of the maps
${\cal E}_W \to B$ is a fibration of $R(B;G)$. Any object
${\cal E}$ can be converted into a fibrant object ${\cal E}^\text{f}$
by a natural construction, called fibrant approximation.

A map ${\cal E} \to {\cal E'}$ is given by compatible maps
${\cal E}_V \to {\cal E'}_V$. A map is a {\it weak equivalence} if
after applying fibrant approximation, it becomes an equivalence
at each $V$. An object ${\cal E}$ is {\it cofibrant} if it is the retract
of an object which is obtained from the zero object by attaching
cells. Any ${\cal E}$ can be functorially replaced by a cofibrant object within
its weak homotopy type; this is called cofibrant approximation.

\subsection*{Cohomology and homology} 
Let ${\cal E}$ be as above, and assume it is both fibrant and cofibrant. 
Let $X \in T(B;G)$ be a cofibrant object.
The equivariant {\it cohomology} of $X$ with coefficients in $\cal E$ 
is the $\RO(G)$-graded theory on $T(B;G)$, denoted
$$
h^\bullet_G(X;\cal E)\, ,
$$
and defined as follows:
if $\alpha = V - W$ is a virtual representation, we set 
$$
h^\alpha_G(X;\cal E) := [S^W\smsh X^+,\cal E_V]_{R(B;G)}
$$
Similarly, 
the {\it homology} of $X$ with coefficients in ${\cal E}$,
denoted
$$
h_\bullet^G(X;{\cal E})\, ,
$$
is defined by
$$
h_\alpha^G(X;{\cal E}):= \colim_{U} 
[S^{V+U},({\cal E}_{W+U} \smsh_B X^+)/B]_{R(*;G)} \, ,
$$
where 
$({\cal E}_{W+U} \smsh_B X^+)/B$
is the effect of taking the mapping cone of the section 
$B \to {\cal E}_{W+U} \smsh_B X^+$.

Using fibrant and cofibrant approximation,
the above extends in a straightforward way to the case
of all objects $X$ and all ${\cal E}$ a $G$-spectrum over $B$
(we omit the details).

\begin{rem} \label{Po_notation} Here is an alternative approach 
to the above, based on \cite{PoHu}. A map of $G$-spaces 
$f: X \to Y$  induces a pullback functor
$$
f^*\: R(Y;G) \to R(X;G)
$$
given by $Z \mapsto Z\times_Y X$, with evident
structure map.

The functor $f^*$ has a right adjoint $f_*$ given by 
$$
T\mapsto \underline{\secs}_Y(X\to T) \, ,
$$ 
where
$\underline{\secs}_Y(X\to T)$ has total space 
$$
\{(y,s)|\, y\in Y, s\:X_y\to T_y\}\, .
$$
Here, $X_y$ denotes the fiber of $X \to Y$ at $y$,
$T_y$ is the fiber of the composite $T \to X \to Y$
at $y$
and $s\: X_y \to T_y$ is a based (unequivariant) map.
 
The functor $f^*$ also admits a left adjoint, denoted
$f_\sharp$, which is defined by 
$$
T \mapsto T\cup_X Y \, .
$$

Let $\Sp(Y;G)$ be the
category of $G$-spectra over $Y$.
If we make these constructions levelwise, we
obtain functors 
$$
f_*,f_\sharp\:\Sp(X;G) \to \Sp(Y;G) \, .
$$
Now take $f\: X\to *$ to be the constant map to a point,
and replace these functors by their derived versions
(using the Quillen model structure). Let ${\cal E}$ be
a fibred $G$-spectrum over $B$. If $X\in T(B;G)$ is an
object with structure morphism $p_X\:X\to B$,
we can take the (derived) pullback $p_X^*{\cal E}$, 
which is a $G$-spectrum over $X$. 
Then the $\RO(G)$-graded homotopy groups of the $G$-spectra
$$
f_*{p_X^*\cal E} \quad \text{and } \quad f_\sharp{p_X^*\cal E}
$$
yield the above cohomology and homology theories.
\end{rem}

\section{Poincar\'e Duality \label{duality-section}}

\subsection*{The orientation bundle}
Let $M$ be a $G$-manifold and $TM$ its tangent bundle.
Let $S^\tau \in R(M;G)$
defined by taking the fiberwise one point compactification of
$TM \to M$ (the section $M \to S^\tau$ is given by the zero section
of $TM$).

Define $S^{-\tau}$ to be the fiberwise functional dual of
$S^\tau$. Alternatively, one can define an unstable version of
$S^{-\tau}$ as follows: equivariantly embed $M$ in a
$G$-representation $V$ and let $\nu$ denote its normal bundle. Its
fiberwise one point compactification $S^\nu$ then represents
$S^{-\tau}$ up to a degree shift by $V$, i.e.,
$$
S^{-\tau} \simeq S^{\nu - V}\, .
$$
We call $S^{-\tau}$ the {\it orientation bundle} of $M$.

If ${\cal E}$ is a fibrant and cofibrant $G$-spectrum
over $M$, we set
$$
{}^{-\tau}{\!\cal E} \,\, := \,\, S^{-\tau} \smsh_M {\cal E} \, .
$$
Using the diagonal action, this is a fibred $G$-spectrum over $M$,
called the {\it twist} of ${\cal E}$ by the orientation bundle.

\begin{rem} The reader may object to this construction since
we haven't defined internal smash products of parametrized $G$-spectra.  An
{\it ad hoc} way to define ${}^{-\tau}{\!\cal E}$ is to use the normal bundle
$\nu$. Let ${}^\nu{\!\cal E}$ be the parametrized $G$-spectrum
given by ${}^\nu{\!\cal E}_W = S^\nu \smsh_M {\cal E}_W$,
where we are using the fiberwise smash product in $R(M;G)$.
Then ${}^{-\tau}{\!\cal E}$
can be defined as the parametrized $G$-spectrum 
whose  $W$-th space is  $\Omega_M^V {}^\nu{\!\cal E}_W$.

Alternatively, the reader is referred to \cite[Ch.\ 13]{May-Sig} for the
construction of the internal smash product.
\end{rem}

\subsection*{Poincar\'e duality}

The following is a special case of \cite[th.\ 4.9]{PoHu} and also
a special case of \cite[th.\ 19.6.1]{May-Sig}.

\begin{thm}[``Fiberwise Poincar\'e duality''] 
\label{duality} Let ${\cal E}$ be a
$G$-spectrum over a closed smooth $G$-manifold $M$.
Then there is an isomorphism
$$
h_\bullet^G (M;{}^{-\tau}{\!\cal E})\,\,  \cong \,\, 
h^\bullet_G(M;{\cal E})\, .
$$

\end{thm}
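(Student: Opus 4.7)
The plan is to deduce the isomorphism from a Costenoble--Waner style self-duality of the sphere spectrum over $M$ in the parametrized stable homotopy category. By Remark \ref{Po_notation}, if $f\:M\to *$ denotes the projection, then $h^\bullet_G(M;{\cal E})$ is the $\RO(G)$-graded homotopy of $f_*{\cal E}$, while $h_\bullet^G(M;{}^{-\tau}{\cal E})$ is the homotopy of $f_\sharp({}^{-\tau}{\cal E})$. It therefore suffices to produce a natural equivalence of $G$-spectra
$$f_*{\cal E}\;\simeq\; f_\sharp({}^{-\tau}{\cal E}).$$

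First, using Mostow--Palais, I would fix an equivariant smooth embedding of $M$ into a finite dimensional subrepresentation $V\subset{\cal U}$ with equivariant normal bundle $\nu$, so that $\nu\oplus\tau_M\cong \epsilon_V$. The fiberwise Pontryagin--Thom construction applied to the diagonal embedding $M\hookrightarrow M\times V$ (with $\nu$ viewed over the first factor) yields a coevaluation morphism $\eta\:S^0\to f_\sharp({}^{-\tau}S^0_M)$ of $G$-spectra, using the stable identification $S^{\nu-V}\simeq S^{-\tau}$. Dually, the fiberwise diagonal of $M$ over itself combined with the Thom collapse produces an evaluation making $(S^0_M,\, {}^{-\tau}S^0_M)$ into a dual pair in $\Sp(M;G)$. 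As in classical Atiyah duality, verifying the two triangular identities reduces via the Thom isomorphism to checking that a certain composite of collapse maps on $S^V$ is equivariantly stably homotopic to the identity, which follows from transversality inside $V$. Given the duality pair, smashing with ${\cal E}$ and applying the standard projection and base-change formulae for $f_\sharp$, $f^*$, $f_*$ yields the desired equivalence $f_*{\cal E}\simeq f_\sharp({}^{-\tau}{\cal E})$.

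The principal obstacle is the careful manipulation of the internal smash product on $\Sp(M;G)$ and the derived functors $f^*$, $f_\sharp$, $f_*$, since, as the authors already flag in \S\ref{duality-section}, these are delicate in the parametrized equivariant setting. One must either invoke the full machinery of \cite{May-Sig} to produce well-behaved fibrant/cofibrant replacements at the point-set level, or bypass the internal smash product by working with the unstable proxy $S^{\nu-V}$ built from the fixed embedding. Illman's theorem is also needed to endow $M$ with an equivariant CW structure so that cofibrant replacement in $\Sp(M;G)$ interacts correctly with the duality pair. Modulo these technicalities, the argument is a direct parametrized equivariant generalization of the classical Spanier--Whitehead--Atiyah duality theorem.
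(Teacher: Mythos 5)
Your proposal is correct in outline, but it takes a different route from the paper: the paper does not prove Theorem \ref{duality} at all, it quotes it as a special case of Hu \cite[th.\ 4.9]{PoHu} and of May--Sigurdsson \cite[th.\ 19.6.1]{May-Sig}, and the two remarks following the statement merely translate notation -- in particular Remark (2) records exactly your first reduction, namely that the claim is equivalent to an equivalence of $G$-spectra $f_\sharp\,{}^{-\tau\!}{\cal E}\simeq f_*{\cal E}$ for $f\:M\to *$. What you then sketch is, in effect, the internal proof of the cited black box: equivariant Mostow--Palais embedding, the Pontryagin--Thom coevaluation and diagonal evaluation exhibiting $({S}^0_M,\,{}^{-\tau}S^0_M)$ as a Costenoble--Waner dual pair in $\Sp(M;G)$, the triangle identities, and then smashing with ${\cal E}$ and applying the projection and base-change formulae. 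That is the standard proof of parametrized equivariant Atiyah duality, and your own caveats identify precisely where the real work lies (derived functoriality of $f_\sharp$, $f^*$, $f_*$, the internal smash product, and fibrant/cofibrant replacement); carrying those out rigorously is essentially the content of Chapters 13--19 of \cite{May-Sig}, which is why the paper chooses to cite rather than reprove. Your approach buys self-containedness and makes visible the geometric input (the embedding and the normal bundle, which also explains the unstable model $S^{\nu-V}$ for $S^{-\tau}$ used elsewhere in the paper); the paper's approach buys brevity and offloads the delicate point-set and model-categorical issues to the references. One small point of care if you were to write your argument out: the coevaluation in Costenoble--Waner duality is most naturally formulated using the external smash product over $M\times M$ pulled back along the diagonal, rather than literally as a map $S^0\to f_\sharp({}^{-\tau}S^0_M)$, so the precise form of the duality data needs to be taken from \cite[Ch.\ 18]{May-Sig} rather than improvised.
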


\begin{rems} (1). Here it is essential that ${\cal E}$ be indexed
over a complete $G$-universe.
\smallskip

{\flushleft (2).} Here is how to recover Theorem \ref{duality} from
 \cite[th.\ 4.9]{PoHu}. Take $f\: M \to *$ to be the constant map
to a point. Then, using the notation of 
Remark \ref{Po_notation},  we have an equivalence of $G$-spectra
$$
f_\sharp (\cal E \smsh_M C_f^{-1}) \,\, \simeq \,\,
f_* {\cal E} \, ,
$$
where $C_f^{-1}$ is  the orientation bundle $S^{-\tau}$.
Hence,
$$
f_\sharp {}^{-\tau\!}\cal E \,\, \simeq \,\,
f_* {\cal E} \, .
$$
Now take the equivariant homotopy groups of both sides
and use Remark \ref{Po_notation} to
obtain Theorem \ref{duality}.
\smallskip

{\flushleft (3).} Here is how to recover Theorem \ref{duality} 
from \cite[th.\ 19.6.1]{May-Sig}. 
Using their notation, take $M = E = B$ and
$J = {}^{-\tau}{\cal E}$.  Then one has an 
equivalence of equivariant fibred $G$-spectra over $M$
$$
J\,\, \simeq \,\,  S_p \triangleright (J\smsh_M {\Bbb P_M} S^{\tau})\, .
$$
After applying homology $h_\bullet^G(M;{-})$ to both sides,
the left side becomes, in our notation, $h_\bullet^G(M;{}^{-\tau}{\cal E})$,
whereas the right side, after some unraveling of definitions
and rewriting, becomes $h^\bullet_G(M;{\cal E})$.
\end{rems}

\section{The equivariant complement formula \label{complement-section}}

As in the introduction, let $N$ be a $G$-manifold and let
$i\:Q\subset N$ be a closed $G$-submanifold. Then $N-Q \to N$
is an object of $T(N;G)$. Let
$$
S_N (N-Q) \in T(N;G)
$$
denote its fiberwise suspension. This has the equivariant homotopy type,
of the complement of $Q$ in $N \times [0,1]$. 

Let 
$\nu$ denote the normal bundle of $Q$ in $N$.
We let $D(\nu)$ be its unit disk bundle and $S(\nu)$ its
unit sphere bundle.

\begin{lem} \label{complement} There is an equivariant weak equivalence
$$
D(\nu) \cup_{S(\nu)} N \,\, \simeq \,\, S_N (N-Q)\, .
$$
\end{lem}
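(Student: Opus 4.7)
The strategy is to use the equivariant tubular neighborhood theorem to cut $N$ open along the boundary of a tube around $Q$ and identify both sides with a common assembly of pieces.

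First, I would invoke the equivariant tubular neighborhood theorem (cf.\ Bredon, Ch.\ VI) to embed $D(\nu)$ $G$-equivariantly onto a closed $G$-invariant tubular neighborhood of $Q$ in $N$, with boundary $S(\nu)$. Setting $W := \overline{N - \mathrm{int}\, D(\nu)}$, this yields a $G$-equivariant pushout decomposition
\[
N \,\cong\, D(\nu) \cup_{S(\nu)} W,
\]
in which $W \hookrightarrow N$ is an equivariant cofibration (it is a codimension-zero $G$-submanifold with collar $S(\nu) \times [0,1)$). Radial deformation along the fibers of the open tube simultaneously shows that $W \hookrightarrow N-Q$ is an equivariant deformation retract, so $W \simeq N-Q$ in $T(N;G)$.

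Second, I would recognize $S_N(N-Q)$ as a homotopy pushout. By definition it is the double mapping cylinder of the two copies of the inclusion $N-Q \hookrightarrow N$, which is a standard model for the homotopy pushout. Replacing $N-Q$ by the homotopy-equivalent cofibration $W \hookrightarrow N$, the homotopy pushout is computed by the strict pushout
\[
S_N(N-Q) \,\simeq\, N \cup_W N.
\]
Substituting $N \cong D(\nu) \cup_{S(\nu)} W$ and identifying the two shared copies of $W$ gives
\[
S_N(N-Q) \,\simeq\, D(\nu) \cup_{S(\nu)} W \cup_{S(\nu)} D(\nu).
\]

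Third, I would compute the left-hand side in the same fashion. The attaching map $S(\nu) \to N$ is the boundary of the tube, which factors through the inclusion $W \hookrightarrow N$. By associativity of pushouts,
\[
D(\nu) \cup_{S(\nu)} N \,\cong\, D(\nu) \cup_{S(\nu)} \bigl(W \cup_{S(\nu)} D(\nu)\bigr) \,\cong\, D(\nu) \cup_{S(\nu)} W \cup_{S(\nu)} D(\nu).
\]
This identifies both sides with the same model, and the zig-zag of weak equivalences assembles into the desired equivalence in $T(N;G)$.

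The only real work, and the main thing to watch, is bookkeeping: one must check that each of the identifications and deformation retractions is a morphism in $T(N;G)$, i.e., that the structure maps to $N$ are respected throughout. This is routine since the tubular neighborhood embedding, the radial retraction, and all of the pushouts are constructed over $N$ by design, and the tubular neighborhood theorem is $G$-equivariant. The only subtlety worth noting is that the open inclusion $N-Q \hookrightarrow N$ is not itself a cofibration, which is precisely why one passes to $W \hookrightarrow N$ before comparing strict and homotopy pushouts.
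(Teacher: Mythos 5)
Your proposal is correct and follows essentially the same route as the paper: identify $D(\nu)$ with a closed equivariant tubular neighborhood, observe that $W = N - \interior D(\nu) \hookrightarrow N-Q$ is an equivariant deformation retract, and identify both $D(\nu)\cup_{S(\nu)} N$ and the double mapping cylinder $S_N(N-Q)$ with the double $N\cup_W N \cong D(\nu)\cup_{S(\nu)} W \cup_{S(\nu)} D(\nu)$. Your version spells out the cofibrancy bookkeeping that the paper leaves implicit, but no new idea is involved.
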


\begin{proof} Identify $D(\nu)$ with a closed equivariant tubular neighborhood
of $Q$. Then we have an equivariant pushout
$$
\xymatrix{
S(\nu)  \ar[r] \ar[d] & N- \interior D(\nu)\ar[d]\\
D(\nu) \ar[r] & N\, ,
}
$$
where $\interior D(\nu)$ is identified with the
interior of the tubular neighborhood
and the inclusion  $N-Q \subset N-\interior D(\nu)$ is
an equivariant equivalence.

So we have an equivariant homeomorphism
$$
D(\nu) \cup_{S(\nu)} N \cong N \cup_{N-\interior D(\nu)} N
$$
and the right side has the equivariant homotopy type
of $S_N (N-Q)$, considered as an object of  $R(N;G)$.
\end{proof}

The object $D(\nu) \cup_{S(\nu)} N$ is called the {\it fiberwise 
equivariant Thom 
space} of $\nu$ over $N$. We denote it by $$T_N(\nu)\, .$$ More generally,
for $\xi$ a virtual $G$-bundle over $Q$, one has a {\it fiberwise
equivariant Thom spectrum} $T_N(\xi)$ over $N$.

The virtual $G$-vector bundle 
over $Q$ defined by $i^*\tau_N - \tau_Q$. 
is represented unstably by $\nu$. 
Substituting this and taking fiberwise
suspension spectra of the right side  of Lemma \ref{complement},
we obtain

\begin{cor}[``Complement Formula''] \label{comp-form}
There is an weak equivalence of $G$-spectra over $N$
$$
T_N(i^*\tau_N - \tau_Q) \,\, \simeq \,\, 
\Sigma^{\infty,G}_N S_N(N-Q)\, .
$$
\end{cor}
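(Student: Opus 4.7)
The plan is to reduce the Complement Formula to Lemma \ref{complement} by observing that the normal bundle $\nu$ of $Q$ in $N$ is an unstable $G$-vector bundle representative of the virtual bundle $i^*\tau_N - \tau_Q$. First I would apply the fiberwise equivariant suspension spectrum functor $\Sigma^{\infty,G}_N$ to the weak equivalence of Lemma \ref{complement}
$$T_N(\nu) \;=\; D(\nu) \cup_{S(\nu)} N \;\simeq\; S_N(N-Q)$$
in $R(N;G)$, to obtain the weak equivalence of $G$-spectra over $N$
$$\Sigma^{\infty,G}_N T_N(\nu) \;\simeq\; \Sigma^{\infty,G}_N S_N(N-Q).$$
The right-hand side is already the target appearing in the statement, so the content of the proof is concentrated on rewriting the left-hand side.

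Next I would identify $\Sigma^{\infty,G}_N T_N(\nu)$ with the fiberwise equivariant Thom spectrum $T_N(i^*\tau_N - \tau_Q)$. Since $i\colon Q \subset N$ is a smooth equivariant embedding, we have an isomorphism of $G$-vector bundles over $Q$
$$\tau_Q \oplus \nu \;\cong\; i^*\tau_N,$$
so in the Grothendieck group of virtual $G$-bundles over $Q$ the class $[\nu]$ represents $[i^*\tau_N] - [\tau_Q]$. By the definition of the fiberwise Thom spectrum of a virtual $G$-bundle (compare \cite[Ch.\ 9]{LMS}), whenever a virtual class is represented by an honest $G$-bundle $\xi_0$, its fiberwise Thom spectrum is canonically weakly equivalent to $\Sigma^{\infty,G}_N$ of the unstable fiberwise Thom space of $\xi_0$. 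Applying this with $\xi_0 = \nu$ yields
$$T_N(i^*\tau_N - \tau_Q) \;\simeq\; \Sigma^{\infty,G}_N T_N(\nu),$$
and concatenating with the previous equivalence produces the Complement Formula.

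The main obstacle is not geometric but definitional: one must verify that the chosen model for $T_N(\xi)$ when $\xi$ is virtual agrees, up to canonical equivalence in the model structure on parametrized $G$-spectra over $N$ set up in Section \ref{fibered-spectra}, with the naive unstable construction applied to any actual bundle representative. Once that compatibility is in place, the combinatorics of the splitting $\tau_Q \oplus \nu \cong i^*\tau_N$ and the space-level equivalence of Lemma \ref{complement} together force the stated equivalence, with no further input needed.
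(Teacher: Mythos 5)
Your proposal is correct and follows the paper's own argument essentially verbatim: apply $\Sigma^{\infty,G}_N$ to the space-level equivalence $T_N(\nu)\simeq S_N(N-Q)$ of Lemma \ref{complement}, and identify $\Sigma^{\infty,G}_N T_N(\nu)$ with $T_N(i^*\tau_N-\tau_Q)$ using the fact that $\nu$ represents the virtual bundle $i^*\tau_N-\tau_Q$ via $\tau_Q\oplus\nu\cong i^*\tau_N$. The only difference is that you make explicit the definitional compatibility between the virtual-bundle Thom spectrum and the suspension spectrum of an unstable representative, which the paper leaves implicit.
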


Note 
the left side of Corollary 
\ref{comp-form} depends only on the underlying homotopy
class of the map $i\: Q \to N$.

\section{The proof of Theorem \ref{equi-int} and Addendum \ref{boundary}
\label{proof-equi-int}}

\begin{proof}[Proof of Theorem \ref{equi-int}]
Consider the equivariant intersection problem 
$$
\SelectTips{cm}{}
\xymatrix{
& N - Q \ar[d] \\
P \ar[r]_f\ar@{..>}[ur]
& N\, ,
}
$$
from \S1. Recall $E \to N$  is  the effect of converting
$N-Q \to N$ into a fibration. 

Consider 
$$
e(\id_P,f^*E) \in H^0_G(P;f^*{\cal E}(i_Q)) \, .
$$
An unraveling of definitions shows
$f^*{\cal E}(i_Q)$ is weak equivalent to the 
naive fiberwise suspension spectrum of $S_P f^*E$. 

Since the object $S_P f^*E$ 
is $(f^*(i_Q)_{!}\cd_\bullet(i_Q)-1)$-connected 
(cf.\ Lemma \ref{connectivity}), 
by Proposition \ref{fib-equi-versus-naive} the map
$$
Q_PS_P f^* E \to Q_P^G S_P f^*E
$$
is $s_\bullet$-connected, where
$$
s_H = \inf_{K\subsetneq H}  f^*(i_Q)_{!}\cd_K(i_Q) - 1 \, .
$$
Using Corollary \ref{factorize-homotopy}, we infer that the 
evident homomorphism
$$
 H^0_G(P;{\cal E}(i_Q)) \cong H^0_G(P;\Sigma^\infty_P S_P f^*E)
\to 
h^0_G(P;{\Sigma^\infty,G}_P S_P f^*E)
$$
from the naive theory to the complete one
is injective when
$$
p^\bullet < s_\bullet \, .
$$
In this range, it follows the image of $e(\id_P,f^*E)$
in $h^0_G(P;\Sigma^{\infty,G}_P f^*E)$ is trivial 
if and only if $e(\id_P,f^*E)$ was trivial to begin with.

The next step is to identify 
$h^\bullet_G(P;\Sigma^{\infty,G}_P S_P f^*E)$.
Using Lemma \ref{complement}, there is a weak equivalence
of objects 
$$
S_N (N-Q)\,\, \simeq \,\, T_N(\nu) \in R(N;G)
$$
where $\nu$ is the normal bundle
of $Q$ in $N$. Consequently, there is an isomorphism
$$
h^\bullet_G(P;\Sigma^{\infty,G}_P S_P f^*E) \,\, \cong \,\,   
h^\bullet_G(P;\Sigma^{\infty,G}_P f^* T_N(\nu)) \, .
$$
By Theorem \ref{duality} the group on the right
is naturally isomorphic to
$$
h_\bullet^G(P;{}^{-\tau_P} \Sigma^{\infty,G}_P f^*T_N (\nu))\, .
$$
An unraveling of the construction  shows
the latter coincides with
the equivariant homotopy groups of the equivariant
Thom spectrum of the virtual bundle $\xi$ over $E(f,i_Q)$ appearing
in the introduction. In particular,
$$
\Omega^G_0(E(f,i_Q);\xi) \cong h_0^G(P;{}^{-\tau_P} 
\Sigma^{\infty,G}_P f^*T_N (\nu) )\, .
$$
With respect to these identifications, we define the {\it equivariant
stable homotopy Euler characteristic}
$$
\chi_G(f) \in \Omega^G_0(E(f,i_Q);\xi) 
$$
to be the unique element that corresponds to
$e(\id_P,f^*E)$.
By the above and Theorem \ref{cohom},
$\chi_G(f)$ fulfills the statement of Theorem \ref{equi-int}.
\end{proof}

\begin{proof}[Proof of Addendum \ref{boundary}]
When $N$ has a boundary and $P$ is closed, the above proof 
extends without modification.
When $P$ has a boundary, one only needs to replace 
Poincar\'e duality (\ref{duality}) in the
closed case with a version of Poincar\'e duality for manifolds
with boundary.

To formulate this, 
let $(M,\partial M)$ is a compact smooth manifold with boundary. Then
duality in this case gives an isomorphism 
$$
h_\bullet^G(M;{}^{-\tau_M}\cal E) \,\, \cong \,\, 
h^\bullet_G(M,\partial M;\cal E) \, .
$$
The right side is defined as follows: for $\alpha = V -W$ and ${\cal E}$ fibrant and
cofibrant, define
$$
h^\bullet_G(M,\partial M;\cal E) \,\, := \,\,
[\Sigma^W_M (M/\!\!/\partial M),\cal E_V]_{R(M;G)}
$$
where $M/\!\!/\partial M$ is the double 
$$
M\cup_{\partial M} M \in R(M;G)
$$
(the section $M\to  M/\!\!/\partial M$ is 
defined using the left summand).
\end{proof}

\section{The proof of Theorems \ref{local} and \ref{global-to-local}
\label{proof-local}}

\begin{proof}[Proof of Theorem \ref{local}]
Recall the factorization $(N-Q) \to E \to N$ in which
$(N-Q) \to E$ is an acyclic cofibration and $E\to N$ is a fibration
of $T(N;G)$.

By construction we have an isomorphism 
$$
H^0_G(X;\Sigma^\infty_N S_N E) \,\, \cong \,\, [X^+,Q_N S_N E]_{R(B;G)}
$$
and an isomorphism
$$
h^0_G(X;\Sigma^{\infty,G}_N S_N E) \,\, \cong \,\, 
[X^+,Q^G_N S_N E]_{R(B;G)}
$$
for any object $X \in T(B;G)$.

With respect to these identifications,
the homomorphism 
$H^0_G(X;\Sigma^\infty_N S_N E)\to 
h^0_G(X;\Sigma^{\infty,G}_N S_N E)$ arises from the map
$$
Q_N S_N E \to Q_N^G S_N E 
$$
by applying homotopy classes $[X^+,{-}]_{R(B;G)}$.

Consider the commutative diagram of abelian groups
\begin{equation} \label{big_diagram}
\xymatrix{
[P_i/\!\!/P_{i-1}, Q_N S_N E]_{R(N;G)} \ar[r]^{j_1} \ar[d]_{\ell_1} &
[P_i^+,Q_N S_N E]_{R(N;G)} \ar[r]^{k_1} 
\ar[d]^{\ell_2} & [P_{i-1}^+,Q_N S_N E]_{R(N;G)} 
\ar[d]^{\ell_3} \\
[P_i/\!\!/P_{i-1}, Q^G_N S_N E]_{R(N;G)} \ar[r]_{j_2} &
[P_i^+,Q^G_N S_N E]_{R(N;G)} \ar[r]_{k_2} & 
[P_{i-1}^+,Q^G_N S_N E]_{R(N;G)}
}
\end{equation}
with exact rows, where the object $P_i/\!\!/P_{i-1}$ is given
by $P_i \cup_{P_{i-1}} N$.

\begin{defn} Let $f_i\: P_i \to N$ be the restriction of $f$ to $P_i$.
Let 
$$
e^i_G(f) \in  [P_i/\!\!/P_{i-1},Q_N S_N E]_{R(N;G)}
$$
be the class determined by the composite
$$
\begin{CD} 
P_i @> f_i >> N @> s_+ >> \Sigma_N E 
\end{CD}
$$
together with the observation that its restriction
to $P_{i-1}$ has a preferred homotopy over $N$ to the 
composite
$$
\begin{CD}
P_{i-1} @> f_{i-1} >> N @> s_- >> \Sigma_N E \, .
\end{CD}
$$
\end{defn}

By essentially the same argument which proves 
Theorem \ref{cohom}, the map $f_i$ is equivariantly homotopic
to a map whose image is disjoint from $Q$, relative to
$P_{i-1}$,  provided 
\begin{itemize}
\item $e^i_G(f) = 0$ and
\item $p^H \le 2f^*(i_Q)_{!}\cd_H(i_Q)  - 3$
for all $(H) \in {\cal I}(G;P)$.
\end{itemize}
If this is indeed the case, the equivariant homotopy extension
property can be used to obtain a new $G$-map $f'$, coinciding
with $f$ on $P_{i-1}$, and satisfying 
$f'(P_i) \cap Q = \emptyset$.

In order to complete the proof of  Theorem \ref{local} 
we will apply a version of Poincar\'e duality. 
Set $H = H_i$ and $P^H_s = P^H- P_H$.
Then the inclusion of pairs
$$
(G\cdot P^H,G\cdot P^H_s) \to (P_i,P_{i-1}) 
$$
is a relative $G$-homeomorphism. Recall the Weyl group
$W(H)$ acts on $P^H$ and restricts to a free action on $P_H$.
The following result follows from the existence of
equivariant tubular neigbhorhoods.

\begin{lem}[{\cite[\S IV]{Davis}}] \label{compactify} 
The open $W(H)$-manifold $P_H$ is the interior of 
a compact free $W(H)$-manifold $\bar P_H$ with
corners. Furthermore, the inclusion $P_H \subset \bar P_H$ is an
equivariant weak equivalence.
\end{lem}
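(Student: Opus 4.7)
The plan is to construct $\bar P_H$ as the complement of an equivariant open tubular neighborhood of the singular stratum $P^H_s$ inside the closed smooth $W(H)$-manifold $P^H$.

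First I would observe that $P^H$ is a closed smooth manifold (with possibly several components of different dimensions) on which the Weyl group $W(H) = N(H)/H$ acts smoothly, and that
\[
P^H_s \,\,=\,\, P^H \setminus P_H \,\,=\,\, \bigcup_{K\supsetneq H} P^K \, ,
\]
the union being over subgroups $K$ that properly contain $H$; each $P^K$ is a closed smooth $W(H)$-invariant submanifold of $P^H$ of strictly smaller dimension. A point $x\in P^H$ lies in $P_H$ precisely when its $G$-stabilizer equals $H$, and this characterization makes it clear that $W(H)$ acts freely on $P_H$: an element $gH\in W(H)$ fixing $x$ would have to lie in $G_x = H$.

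Next I would build a closed $W(H)$-invariant tubular neighborhood $N$ of $P^H_s$ in $P^H$ by induction on the partial ordering of isotropy types appearing in $P^H$. Using the equivariant tubular neighborhood theorem (Bredon, Ch.\ VI), start with the deepest strata $P^K$ (those $K$ maximal among subgroups properly containing $H$, so $P^K$ is a closed submanifold with free $N(K)/K$-action on its normal bundle minus the zero section) and choose a small $W(H)$-invariant tubular neighborhood $N_K$. Moving up the poset, pick tubular neighborhoods of the remaining $P^{K'}$ that are compatible with, and meet transversely, the previously chosen ones. Taking $N$ to be the union of these neighborhoods and setting
\[
\bar P_H \,\,:=\,\, P^H \setminus \interior N
\]
yields a compact smooth manifold with corners (the corners arising at intersections of the boundaries of the various tubular neighborhoods); by construction $\bar P_H \subset P_H$, so $W(H)$ acts freely on $\bar P_H$.

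Finally, to see that the inclusion $P_H \subset \bar P_H$ is a $W(H)$-equivariant weak equivalence, I would produce a $W(H)$-equivariant deformation retraction of $P_H$ onto $\bar P_H$ by flowing radially outward in the normal directions within each tubular neighborhood (away from the deleted stratum), again patched together along the inductive choices. Since all constructions are made equivariantly, taking fixed points for any subgroup of $W(H)$ commutes with the retraction and gives a homotopy equivalence on fixed points, hence an equivariant weak equivalence. The one genuinely delicate point is organizing the inductive choices so that the successive tubular neighborhoods fit together to yield an honest corners structure on $\bar P_H$ and a compatible retraction; this is exactly the content of the stratified equivariant tubular neighborhood machinery of Davis \cite{Davis}, which I would invoke rather than reprove.
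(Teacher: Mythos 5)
Your proposal is correct and follows essentially the same route as the paper, which offers no argument of its own beyond citing Davis \cite{Davis} and ``the existence of equivariant tubular neighborhoods'' --- precisely the construction you sketch, with the delicate stratified patching likewise deferred to Davis. The only cosmetic mismatch is the direction of the inclusion: your $\bar P_H = P^H \setminus \interior N$ sits \emph{inside} $P_H$, so to realize $P_H$ literally as $\interior \bar P_H$ (as the lemma states) one identifies $P_H \cong \interior \bar P_H$ by the outward radial flow in the tubular neighborhoods; this changes nothing homotopically.
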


Consider the left
square of diagram  \ref{big_diagram}.  By Lemma
\ref{compactify}, and ``change of groups''
it maps to the square
\begin{equation} \label{little-diagram}
\xymatrix{
[\bar P_H/\!\!/\partial \bar P_H, Q_N S_N E]_{R(N;W(H))} 
\ar[r]^{j'_1} \ar[d]_{\ell'_1} &
[(P^H)^+,Q_N S_N E]_{R(N;W(H))} 
\ar[d]^{\ell'_2}\phantom{\, .} \\
[\bar P_H/\!\!/\partial \bar P_H, Q^G_N S_N E]_{R(N;W(H))} \ar[r]_{j'_2}
 &
[(P^H)^+,Q^G_N S_N E]_{R(N;W(H))} \, .
}
\end{equation}
The proof of Theorem \ref{local} is completed in two steps.

\subsection*{Step 1} The homomorphism $\ell'_1$ is an isomorphism, since
$W(H)$ acts freely on $\bar P_H/\!\!/\partial \bar P_H$ in the
``based'' sense.  This can be proved by an induction argument using
an equivariant cell decomposition, together with the observation 
that the map $Q_N S_N E \to Q_N^G S_N E$ is a weak homotopy equivalence of
underlying topological spaces.

\subsection*{Step 2} There is a relative $W(H)$-homeomorphism
$$
(\bar P_H,\partial \bar P_H) \cong (P^H,P^H_s)
$$
which, together with change of groups, gives an
isomorphism 
$$
[\bar P_H/\!\!/\partial \bar P_H, Q_N S_N E]_{R(N;W(H))} \,\, \cong\,\, 
[P_i/\!\!/P_{i-1}, Q_N S_N E]_{R(N;G)} \, .
$$
We will consider $e^i_G(f)$ to be an element of the left
hand side. Then $\ell'_1(e^i_G(f))$ can be regarded as
an element of  relative cohomology group
$$
h^0_{W(H)}(\bar P_H,\partial \bar P_H; \Sigma^{\infty,G}_N S_N E) \, .
$$
Define $\chi^i_G(f)$ to be its Poincar\'e dual. 
Using the equivariant equivalence $P_H \simeq \bar P_H$,
we  can regard $\chi^i_G(f)$ as living in the 
homology group  
$$
h_0^{W(H)}(P_H; {}^{-\tau_{P_H}\!\!}
(\Sigma^{\infty,G}_{P_H} S_{P_H} f_H^*E))\, .
$$ 
As in the proof of Theorem \ref{equi-int} this homology group is
isomorphic to the equivariant bordism group
$$
\Omega^{W(H)}_0(E(f_{H},i_Q);{}_H\xi)\, .
$$
\end{proof}

\begin{proof}[Proof of Theorem \ref{global-to-local}]
  The proof uses diagrams \eqref{big_diagram} and
\eqref{little-diagram}. The homomorphism
$(t_H)_*$ is identified with 
the Poincar\'e dual of the homomorphism
$j'_2$ of diagram \eqref{little-diagram}. 
Therefore $(t_H)_*$ is injective if and only if $j'_2$ is. 
Let 
$$
\ell\: H^0_G(P;\Sigma^\infty_N S_N E)
\to
h^0_G(P;\Sigma^{\infty,G}_N S_N E)
$$
be the canonical homomorphism. Recall $\chi_G(f)$
is the Poincar\'e dual of $\ell(e_G(f))$.

The class $j'_1(e^i_G(f))$ is the one
associated with  the composition
$$
\begin{CD}
(P^H)^+ \subset P^+ @> f >> N @> s >> S_N E \, ,
\end{CD}
$$ 
i.e., the restriction of $e_G(f)$ to $P^H$.
By hypothesis, $\chi_G(f)$ is trivial, so 
$\ell'_2j'_1(e^i_G(f))$ must also be trivial 
since the latter is the restriction to $P^H$ 
of the trivial class $\ell(e_G(f))$. 

Hence 
$$
j_2' \ell_1'(e^i_G(f))= \ell'_2 j'_1(e^i_G(f)) = 0\, .
$$
Furthermore, since
$j_2'$ is identified with $(t_H)_*$, and the latter
is by hypothesis injective, the vanishing of
$j_2' \ell_1'(e^i_G(f))$ implies $\ell_1'(e^i_G(f)) = 0$. 
Hence, $\chi^i_G(f)$ vanishes too, as it is the Poincar\'e dual of 
$\ell_1'(e^i_G(f))$.
The result is now concluded by induction on $i$ and Theorem \ref{local}.
\end{proof}

\section{The proof of Theorem \ref{lefschetz} \label{proof-lefschetz}}

Given a closed smooth $G$-manifold $M$, we have a commutative square
of equivariant mapping spaces
\begin{equation}\label{fixed_square}
\xymatrix{
\self^\flat(M)^G \ar[r]^\subset \ar[d] & \self(M)^G \ar[d] \\
\maps (M,M \times M - \Delta)^G \ar[r] & \maps (M,M \times M)^G
}
\end{equation}
where 
\begin{itemize}
\item $\Delta := \Delta_M \subset M \times M$ is the diagonal,
\item $M \times M$ is given the diagonal $G$-action,
\item $\self(M)^G$ is the space of equivariant self maps of $M$,
\item  $\self^\flat(M)^G$ is the subspace of fixed point free
equivariant self maps, and
\item the vertical maps of the square are given by taking 
graphs and the horizontal ones are inclusions.
\end{itemize}

\begin{lem}\label{cartesian}
The  square \eqref{fixed_square} is $\infty$-cartesian,
i.e., it is a homotopy pullback.
\end{lem}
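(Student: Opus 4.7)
The plan is to deduce the square is a homotopy pullback from the pasting lemma, by extending it one step to a larger diagram in which the vertical maps arise as strict (hence homotopy) fibers of Serre fibrations between equivariant mapping spaces. First I would verify that the square is a strict pullback: an element of the strict pullback is a pair $(f,g)$ with $f\in\self(M)^G$ and $g\in\maps(M, M\times M - \Delta)^G$ for which $g$ equals the graph of $f$ in $\maps(M,M\times M)^G$; this forces $(x,f(x))\notin\Delta$ for every $x\in M$, i.e.\ $f$ fixed point free, so the strict pullback is naturally $\self^\flat(M)^G$.

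Next I would introduce the $G$-equivariant first-factor projection $\pi\colon M\times M \to M$ together with its restriction $\pi'\colon M\times M - \Delta \to M$. The map $\pi$ is a trivial $G$-equivariant fiber bundle with fiber $M$, while $\pi'$ is obtained from $\pi$ by removing the image of the equivariant section $x\mapsto (x,x)$; the slice theorem and an equivariant tubular neighborhood argument around each orbit in $M$ show that $\pi'$ is still a $G$-equivariant fiber bundle, with fiber $M-\{x_0\}$ over $x_0$. Equivariant fiber bundles induce Serre fibrations on equivariant mapping spaces out of a $G$-CW complex (and $M$ is one by Illman), so both
\[
\pi_*\colon \maps(M, M\times M)^G \to \maps(M,M)^G
\qquad\text{and}\qquad
\pi'_*\colon \maps(M, M\times M - \Delta)^G \to \maps(M,M)^G
\]
are Serre fibrations. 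Using the identification $\maps(M, M\times M)^G\cong \maps(M,M)^G\times\maps(M,M)^G$, the graph map becomes the slice inclusion $f\mapsto (\id_M, f)$, so $\self(M)^G$ is exactly $\pi_*^{-1}(\id_M)$, and similarly $\self^\flat(M)^G = (\pi'_*)^{-1}(\id_M)$.

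With these identifications in hand, I would paste our square onto the fibration $\pi_*$ to form
\[
\xymatrix{
\self^\flat(M)^G \ar[r]\ar[d] & \self(M)^G \ar[r]\ar[d] & \{\id_M\}\ar[d] \\
\maps(M, M\times M - \Delta)^G \ar[r] & \maps(M, M\times M)^G \ar[r]_-{\pi_*} & \maps(M,M)^G
}
\]
The right square and the outer rectangle are strict pullbacks of the Serre fibrations $\pi_*$ and $\pi'_*$ along the point inclusion $\{\id_M\}\hookrightarrow\maps(M,M)^G$, hence each is a homotopy pullback. The pasting lemma for homotopy pullbacks then forces the left square, which is the square of the statement, to be a homotopy pullback as well. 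The step I expect to be the main obstacle is verifying the $G$-equivariant local triviality of $\pi'$ at points with nontrivial stabilizer, where one must combine a $G$-invariant tube about an orbit with a continuous $H$-equivariant family of local homeomorphisms $M-\{x\}\to M-\{x_0\}$ implementing the fiber identification; this is formal once the slice theorem is invoked, but the checking of equivariance at each orbit type has to be done carefully for the pasting argument to close.
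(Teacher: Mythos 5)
Your argument is correct and is essentially the paper's own proof: both identify the vertical maps as pullbacks of the first-factor projections $M\times M\to M$ and $M\times M-\Delta\to M$ under $\maps(M,-)^G$, recognize the top row as the fibers over $\id_M$, and conclude by the standard fact that a map of fibrations together with its fibers forms a homotopy pullback (your pasting diagram is just an unwound form of that fact). The one step you flag as the main obstacle --- equivariant local triviality of $M\times M-\Delta\to M$ via the slice theorem --- is avoidable: since fibrations in $T(G)$ are defined by requiring each fixed-point map to be a Serre fibration, it suffices to observe that $M^H\times M^H-\Delta_{M^H}\to M^H$ is a fiber bundle of (unequivariant) manifolds for every $H$, and $\maps(M,-)^G$ carries such equivariant fibrations to Serre fibrations because $M$ is a cofibrant ($G$-CW) object.
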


\begin{proof} The following idea is used in the proof. Suppose
$X\to Y$ is a map of fibrations over $B$.
Let $X_b$ be the fiber of $X \to B$ at $b\in B$ and similarly
let $Y_b$ be the fiber of $Y \to B$. Then the diagram
$$ \xymatrix{ X_b \ar[r]\ar[d] & Y_b \ar[d]\\ X \ar[r] & Y }
$$
is $\infty$-cartesian.
 
We claim the first factor projection map
\begin{equation}\label{project1}
M\times M -\Delta \to M
\end{equation}
is a fibration of $T(G)$. For, let $H \subset G$
be a subgroup. Taking the induced map of 
fixed point spaces yields the projection map
\begin{equation} \label{project2}
M^H\times M^H -\Delta_{M^H} \to M^H\, .
\end{equation}
Since $M^H$ is a manifold, 
the map \eqref{project2} is a Serre fibration of spaces. 
It follows that the map $\eqref{project1}$ is a fibration
of $T(G)$.

Applying the functor $\maps(M,{-})^G$ to the projection
map, we infer
$$
\maps(M,M\times M - \Delta)^G \to \maps(M,M)^G
$$
is a fibration whose fiber at the identity map
of $M$ is $\maps^\flat(M)^G$.

Similarly, the first factor projection $M \times M \to M$ is an equivariant fibration,
so the induced map 
$$
\maps(M,M\times M)^G \to \maps(M,M)^G
$$
is a fibration whose fiber at the identity is $\maps(M,M)^G$.

It now follows easily from the first paragraph of 
the proof that the square \eqref{fixed_square}
is $\infty$-cartesian.
\end{proof}

From  Lemma \ref{cartesian}, the obstruction to deforming 
an equivariant self map 
$$
f\: M \to M
$$
to a fixed point free map coincides with 
equivariantly deforming its graph 
$\Gamma_f\: M \to M \times M$ off of the diagonal.

Consequently, we are reduced to the equivariant intersection problem
$$
\SelectTips{cm}{}
\xymatrix{
& M \times M - \Delta \ar[d] \\
M \ar[r]_{\Gamma_f} \ar@{..>}[ur] & M \times M\, .
}
$$

We will prove Theorem \ref{lefschetz} using Corollary \ref{descent}.
We will need to compute the codimension function the
diagonal.

\begin{lem} \label{codim-Lefschetz} Let $i_\Delta\: \Delta\subset M \times M$ be the 
inclusion. For $(H) \in {\cal I}(M;G)$, we have
$$
\cd_H(i_\Delta) = m^H \, .
$$
\end{lem}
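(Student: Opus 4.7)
The plan is to unpack the definitions and observe that the statement reduces to a straightforward computation of dimensions, with the only mild subtlety being that $m^H$ is locally constant rather than globally constant.

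First I would identify the fixed sets involved. Since the $G$-action on $M\times M$ is diagonal, we have $(M\times M)^H = M^H \times M^H$. The diagonal $\Delta = \Delta_M$ sits inside $M\times M$ as the image of the $G$-equivariant embedding $M \to M\times M$, $x \mapsto (x,x)$, so taking $H$-fixed points gives $\Delta^H = \{(x,x) : x \in M^H\}$, which is precisely the diagonal of $M^H \times M^H$. In particular, $\Delta^H$ is $G$-equivariantly (indeed $W(H)$-equivariantly) diffeomorphic to $M^H$.

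Next I would compute the codimension pointwise. Given a point $(x,x) \in \Delta^H$ lying in a component $C \subset M^H$ of (unequivariant) dimension $m^H(x)$, the corresponding component of $\Delta^H$ has dimension $m^H(x)$, while the corresponding component of $(M\times M)^H = M^H \times M^H$ containing $(x,x)$ is $C \times C$, of dimension $2m^H(x)$. By definition of the codimension indexing function, $\cd_H(i_\Delta)(x,x)$ is the dimension of the normal space to $\Delta^H \subset (M\times M)^H$ at $(x,x)$, which equals
\[
\dim_{(x,x)}(M^H\times M^H) - \dim_{(x,x)}\Delta^H = 2m^H(x) - m^H(x) = m^H(x).
\]
This gives the desired equality of locally constant functions on $\Delta^H$, once we identify $\Delta^H$ with $M^H$ via the diagonal embedding.

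I do not anticipate a genuine obstacle here: the one thing to be mindful of is that $m^H$ is a locally constant function that can take different values on different components of $M^H$, but since the argument above is carried out pointwise (equivalently, component by component), this causes no trouble. The case where $M^H$ is empty is vacuous under the running hypothesis $(H)\in \mathcal{I}(G;M)$.
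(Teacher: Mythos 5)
Your proof is correct and is essentially the same as the paper's, which simply observes that the codimension of the diagonal $M^H_{(x)} \subset M^H_{(x)} \times M^H_{(x)}$ at a point $x$ is $m^H(x)$; you have merely spelled out the identifications $(M\times M)^H = M^H\times M^H$ and $\Delta^H \cong M^H$ and the dimension count $2m^H(x)-m^H(x)$ in more detail.
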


\begin{proof} If $x\in \Delta  = M$ is a point then
the codimension of the diagonal inclusion 
$M^H_{(x)} \subset M^H_{(x)} \times M^H_{(x)}$ is clearly $m^H(x)$.
\end{proof}

By Lemma \ref{codim-Lefschetz}, the inequality
of Corollary \ref{descent} amounts to the condition
$$
m^H \le 2(\Gamma_f)^*(i_\Delta)_!m^H - 3\, .
$$
By a straightforward argument which we omit, $(\Gamma_f)^*(i_\Delta)_!m^H$
coincides with $m^H$, so the inequality becomes
$$
m^H \ge 3
$$
for $(H) \in {\cal I}(M;G)$.

We now turn to the problem of deciding when the homomorphisms $(t_H)_*$
are injective. What is special about the fixed point case is
that the virtual $W(H)$-bundle ${}^H\xi$, which sits over the space
$$
M^H \times_M L_f M\, ,
$$
is represented by an actual vector bundle. This vector
bundle is just the pullback of the normal bundle 
of the embedding $M^H \subset M$ along the 
(projection) map $M^H \times_M L_f M\to M^H$. Henceforth, 
we identify  ${}^H\!\xi$ with this vector bundle.

Therefore  $(t_H)_*$
is identified with the homomorphism of equivariant framed
bordism groups
\begin{equation} \label{homo}
\Omega^{W(H)}_0(M_H \times_M L_f M;{}_H\xi) \to 
\Omega^{W(H)}_0(M^H \times_M L_f M;{}^H\!\xi) 
\end{equation}
induced by the inclusion 
$$
t_H\: M_H \times_M L_f M \to M^H \times_M L_f M\, ,
$$
where both $\xi_H$ and $\xi^H$ are $W(H)$-vector bundles
and the pullback $t_H^*\xi^H$ is isomorphic to $\xi_H$.

Hence, $(t_H)_*$
arises by taking the $W(H)$-fixed spectra
of the map of equivariant suspension spectra
\begin{equation} \label{susp-map}
\Sigma^\infty_{W(H)}(M_H \times_M L_f M)^{{}_H\xi} \to
\Sigma^\infty_{W(H)}(M^H \times_M L_f M)^{{}^H\!\xi}
\end{equation}
and then applying $\pi_0$.  

The inclusion $M_H \subset M^H$ is $1$-connected, since by hypothesis
$M^H_s := M^H - M_H$ has codimension at least two in $M^H$. Consequently,
the inclusion $M_H \times_M L_f M \to M^H \times_M L_f M$ is also 
$1$-connected. Furthermore,  $M_H \times_M L_f M$ is $W(H)$-free.

If we apply the tom Dieck splitting to the $W(H)$-fixed points of the map
\eqref{susp-map}, we obtain maps of summands of the form
\begin{equation} \label{summand}
\Sigma^\infty ((M_H \times_M L_f M)^{{}_H\xi})^K_{hW'(K)}
\to 
\Sigma^\infty ((M^H \times_M L_f M)^{{}^H\!\xi})^K_{hW'(K)} \, ,
\end{equation}
where $(K)$ ranges through the conjugacy classes of subgroups
of $W(H)$, $W'(K)$ denotes the Weyl group of $K$ in $W(H)$ and
the subscript ``${}_{hW'(K)}$'' is an abbreviation for the
Borel construction (in the notation above, we are first Thomifying,
then taking fixed points and thereafter taking the Borel construction).

If $K$ is not the trivial group, then the freeness of the action
implies the domain of \eqref{summand} is contractible, 
and therefore this map induces an injection on $\pi_0$.
If $K$ is trivial, then the map takes the form
$$
\Sigma^\infty (M_H \times_M L_f M)^{{}_H\xi}_{hW(H)}
\to 
\Sigma^\infty (M^H \times_M L_f M)^{{}^H\!\xi}_{hW(H)}
$$
which is evidentally $1$-connected. Assemblying these
injections, one sees the homomorphism \eqref{homo} is
also injective. Therefore, the homomorphism
$(t_H)_*$ appearing in the statement of Corollary \ref{descent} is injective
for every $(H) \in {\cal I}(M;G)$.

The proof of
Theorem \ref{lefschetz} is now completed by applying Corollary
\ref{descent}.

\section{The proof of Theorems \ref{periodic} and \ref{stalk} \label{proof-periodic}}

Let $f\: M \to M$ be a self map of a closed smooth manifold $M$.
The {\it Fuller map} of $f$ is the 
$\Bbb Z_n$-equivariant self map of $M^{\times n}$ given by
$$
(x_1,...,x_n) \mapsto (f(x_{n}),f(x_1),f(x_2),...,f(x_{n-1}))
$$
(compare Fuller \cite{Fuller}). Here $n \ge 2$ and $\Bbb Z_n$ acts by 
cyclic permutation of factors. The 
assignment $x \mapsto (x,f(x),\dots,f^{n-1}(x))$ defines
a $\Bbb Z_n$-equivariant bijective correspondence 
between the $n$-periodic point set of $f$ and the fixed point
set of $\Phi_n(f)$. In particular, $f$ is $n$-periodic point free
if and only if $\Phi_n(f)$ is fixed point free. We wish to know
whether this statement is true up to homotopy. 

Let $\self(M)$ be the space of self maps of $M$,
and $\self(M^{\times n})^{\Bbb Z_n}$  the
space of $\Bbb Z_n$-equivariant self maps of $M^{\times n}$. The
{\it Fuller transform} 
$$
\Phi_n\: \self(M) \to \self(M^{\times n})^{\Bbb Z_n}
$$
is defined by $f\mapsto \Phi_n(f)$. 

Let 
$$
\self^\flat_n(M) \subset \self(M)
$$
be the subspace of self maps having no $n$-periodic points.
Let
$$
\text{end}^{\flat}(M^{\times n})^{\Bbb Z^n} 
\subset \text{end}(M^{\times n})^{\Bbb Z_n}
$$
be the subspace of equivariant 
self maps of $M^{\times n}$ which are fixed point free.

Then there is a commutative diagram of spaces
\begin{equation} \label{period_diagram}
\xymatrix{
\text{end}^{\flat_n}(M) 
\ar[r]\ar[d] & \text{end}(M)\ar[d] \\
\text{end}^{\flat}(M^{\times n})^{\Bbb Z_n} 
\ar[r] & \text{end}(M^{\times n})^{\Bbb Z_n}
}
\end{equation}
where the vertical maps are given by the Fuller transform
and the horizontal ones are inclusions. The square is
cartesian, i.e., it is a pullback. We wish to understand the
extent to which it is a homotopy pullback.

\begin{ques}  Is the above square  
$0$-cartesian?
\end{ques}

That is, is the map from $\text{end}^{\flat_n}(M)$
to the corresponding homotopy pullback a surjection on 
components? If yes, it would reduce the problem of studying the 
$n$-periodic points of $f$ to the $\Bbb Z_n$-equivariant
fixed point theory of $\Phi_n(f)$. At the time of
writing we do not know this to be the case. Nevertheless,
we can still use the diagram to get an invariant of 
self maps which is trivial when the self map is homotopic
to an $n$-periodic point free one.

\begin{defn} Set
$$
\ell_n(f) := \ell_{\Bbb Z_n}(\Phi_n(f))\, .
$$
\end{defn}
By a straightforward 
calculation we omit, $\ell_n(f)$ lives in the group
$$
\Omega_0^{{\Bbb Z}_n, \text{fr}}(\hoP_n(f))
$$
appearing in the statement of Theorem \ref{periodic}.
It is clear that $\ell_n(f)$ vanishes when $f$ is homotopic
to an $n$-periodic point free map. If we apply
Theorem \ref{lefschetz} to $\Phi_n(f)$ we obtain

\begin{cor} Assume $\dim M \ge 3$ and $\ell_n(f) = 0$.
Then $\Phi_n(f)$ is equivariantly homotopic to a fixed point
free map.
\end{cor}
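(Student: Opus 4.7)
The plan is to apply Theorem \ref{lefschetz} directly to the equivariant self map $\Phi_n(f)\colon M^{\times n} \to M^{\times n}$ with $G = \Bbb Z_n$ acting by cyclic permutation. By the definition $\ell_n(f) := \ell_{\Bbb Z_n}(\Phi_n(f))$, the vanishing hypothesis supplies the first assumption of that theorem, so all that is left is to check the two dimension-function inequalities for the $\Bbb Z_n$-action on $M^{\times n}$.

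First I would enumerate the isotropy data. The subgroups of $\Bbb Z_n$ are precisely the cyclic groups $\Bbb Z_k$ for $k \mid n$, and a tuple $(x_1,\dots,x_n)$ is fixed by $\Bbb Z_k \subset \Bbb Z_n$ if and only if it is $(n/k)$-periodic, so the fixed set $(M^{\times n})^{\Bbb Z_k}$ is canonically homeomorphic to $M^{\times (n/k)}$ via the projection onto the first $n/k$ coordinates. Consequently the dimension function is given by
$$
m^{\Bbb Z_k} \,\,=\,\, (n/k)\cdot \dim M \, .
$$

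Next I would verify the two inequalities of Theorem \ref{lefschetz} under the hypothesis $\dim M \ge 3$. The first, $m^H \ge 3$, reduces to $(n/k)\dim M \ge \dim M \ge 3$, which is immediate. For the gap condition, a proper inclusion $K \subsetneq H$ of subgroups of $\Bbb Z_n$ corresponds to divisors $k' \mid k$ with $k' < k$; then $k/k' \ge 2$, so
$$
m^K - m^H \,\,=\,\, \bigl(n/k' - n/k\bigr)\dim M \,\,\ge\,\, (n/k)\dim M \,\,\ge\,\, 3 \,\,\ge\,\, 2\, .
$$

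With both hypotheses of Theorem \ref{lefschetz} verified and $\ell_{\Bbb Z_n}(\Phi_n(f)) = \ell_n(f) = 0$ by assumption, the theorem produces an equivariant homotopy from $\Phi_n(f)$ to a fixed-point-free $\Bbb Z_n$-map of $M^{\times n}$. There is no genuine obstacle here; the only substantive check is the elementary dimension count above, and the corollary really is just a specialization of the fixed-point theorem to the Fuller transform.
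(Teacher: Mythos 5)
Your proposal is correct and follows the paper's own route: the corollary is obtained by applying Theorem \ref{lefschetz} to the Fuller map $\Phi_n(f)$ on $M^{\times n}$, and your identification of the fixed sets $(M^{\times n})^{\Bbb Z_k}\cong M^{\times(n/k)}$ together with the resulting dimension count $m^{\Bbb Z_k}=(n/k)\dim M$ verifies exactly the two hypotheses ($m^H\ge 3$ and the gap condition) that the paper leaves implicit. Nothing further is needed.
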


As mentioned in \S1, a result of Jezierski
\cite{Jezierski}
asserts $f$ is homotopic to an $n$-periodic point free  map
if $\dim M \ge 3$ and the Nielsen numbers $N(f^k)$ vanish
for each $k$ a divisor of $n$. Conjecturally,
the invariant $\ell_n(f)$ contains at least as much information
as these Nielsen numbers (additional 
evidence for this is provided below
in Theorem \ref{stalk}). If one assumes this to be the case,
then Jezierski's theorem tends to suggest that
the square \eqref{period_diagram} is $0$-cartesian.
However, we do not see any homotopy 
theoretic reason why that should be true.

\begin{proof}[Proof of Theorem \ref{stalk}] The tom Dieck splitting yields a
decomposition of $\Omega_0^{\Bbb Z_n,\text{\rm fr}}(\hoP_n(f))$ into
summands of the form
$$
\Omega_0^{\text{\rm fr}}(E\Bbb Z_k \times_{\Bbb Z_k}\hoP_k(f))
$$
for $k$ a divisor of $n$, where we are using the fact 
$\hoP_n(f)^{\Bbb Z_k} = \hoP_k(f)$.

Since the zeroth framed bordism of a space is the free abelian
group on its path components,
it will suffice to show $\pi_0(E\Bbb Z_k \times_{\Bbb Z_k}\hoP_k(f))$
is isomorphic to $\pi_{\rho,k}$. We first compute 
the set of components of $\hoP_k(f)$. 

Recall from \S1 that a point of  $\hoP_k(f)$ is given by
a $k$-tuple of paths
$$
(\lambda_1,...,\lambda_k)
$$
subject to the constraint $f(\lambda_{i+1}(0)) = \lambda_i(1)$
where $i$ is taken modulo $k$.
Two points $(\lambda_1,...,\lambda_k)$ and 
$(\gamma_1,...,\gamma_k)$ are in the same component if and only if
there are paths $\alpha_i$ having
initial point $\lambda_i(0)$ and terminal point $\gamma_i(0)$
such that the concatenated paths
$$
f(\alpha_i)\ast \lambda_{i+1} \quad \text{and } \quad
\gamma_{i+1}\ast \alpha_{i+1}
$$
are homotopic relative to their endpoints, for $i = 1,2,\dots,k$.

Since $M$ is connected, each component of $\hoP_k(f)$ has a point of the form
$(\lambda_1,\lambda_2,\dots,\lambda_k)$ satisfying $\lambda_i(0) = *$ and
$\lambda_i(1) = f(*)$. Let $\pi_f$ denote the set of homotopy classes of paths
in $M$ joining the basepoint $*$ to the point $f(*)$. 
A choice of element $[\alpha]$ of 
$\pi_f$ determines an isomorphism with $\pi$.
Using this isomorphism the set of path components of 
$\hoP_k(f)$ is a quotient of the $k$-fold cartesian product
$$
\pi \times \cdots \times \pi
$$
with respect to the equivalence relation
$$
(x_1,x_2,\dots,x_k) \, \sim \, 
(g_1x_1\rho(g_2)^{-1},g_2x_2\rho(g_3)^{-1},\dots,g_kx_k\rho(g_1)^{-1})
$$
for $x_i,g_i \in \pi$. Using this relation, the $k$-tuple
$(x_1,\dots x_k)$ is equivalent to the $k$-tuple
$$
(y,1,\dots 1)
$$
where $y = x_1\rho(x_2)\rho^2(x_3)\cdots \rho^{k-1}(x_k)$.
Furthermore, any two elements of the form $(y,1,\dots,1)$ and
$(z,1,\dots,1)$ are related precisely when $z = gy\rho^k(g)^{-1}$
for some element $g\in \pi$. Summarizing thus far, we have shown
$\pi_0(\hoP_k(f))$ is the quotient of $\pi$ with respect
to the equivalence relation
$$
y \, \sim\, g y \rho^k(g)^{-1}
$$
for $g,y\in \pi$.

To complete the proof of Theorem \ref{stalk}, one notes the set
of path components of the Borel construction coincides with the
coinvariants of $\Bbb Z_k$ acting on $\pi_0(\hoP_k(f))$.  With respect
to the $k$-tuple description of $\pi_0(\hoP_k(f))$, the action is 
induced by cyclic permutation of factors: $(x_1,x_2,\dots,x_k) \mapsto
(x_k,x_1,\dots,x_{k-1})$.  If we identify this element with
$(y,1,\dots 1)$ with $y$ as above, then the result of acting by a
generator of the cyclic group results in an element equivalent to
$(\rho(y),1,\dots 1)$.  Consequently, $\pi_0(E\Bbb Z_k \times_{\Bbb
Z_k}\hoP_k(f))$ is obtained from $\pi_0(\hoP_k(f))$ by imposing the
additional relation $y \sim \rho(y)$.  Hence, the set of path
components of the Borel construction is isomorphic to $\pi_{\rho,k}$.
\end{proof}

\begin{conjecture} Let ${\cal N}_k(f)$ be the number 
of non-zero terms in $\ell_n^k(f)$ expressed as a linear
combination of the basis elements of $\Bbb Z[\pi_{\rho,k}]$.
Then ${\cal N}_k(f)$ equals the Nielsen number of $f^k$.
\end{conjecture}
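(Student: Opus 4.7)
The plan is to relate the coefficients of $\ell_n^k(f)$ in the basis $\pi_{\rho,k}$ to fixed point indices of the iterates $f^k$, and then invoke the classical criterion that a Nielsen class is essential precisely when its index is nonzero. First I would obtain a geometric representative of $\ell_n(f) = \ell_{\Bbb Z_n}(\Phi_n(f))$ via Pontryagin--Thom: after a generic equivariant approximation of $\Phi_n(f)$, the class is represented by the framed zero-dimensional $\Bbb Z_n$-manifold consisting of the fixed points of $\Phi_n(f)$, each mapped to the relevant loop space by its constant path and equipped with the equivariant index framing coming from the linearization of $\Phi_n(f) - \id$ along the diagonal.

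Next I would match this to periodic orbits of $f$. A fixed point of $\Phi_n(f)$ is an $n$-tuple $(x, f(x),\dots, f^{n-1}(x))$ with $f^n(x) = x$, and its $\Bbb Z_n$-isotropy is $\Bbb Z_{n/k}$ exactly when $x$ has exact period $k$. Under the tom Dieck splitting, the summand $\Omega_0^{\text{\rm fr}}(E\Bbb Z_k \times_{\Bbb Z_k} \hoP_k(f))$ therefore collects contributions from the $\Bbb Z_n$-orbits of exact period $k$, with $\hoP_n(f)^{\Bbb Z_{n/k}} = \hoP_k(f)$ accounting for the reduction from $n$-tuples of paths to $k$-tuples. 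A local calculation in equivariant framings should show that the sign attached to such an orbit coincides with the fixed point index of $f^k$ at any representative point of the orbit.

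The remainder of the argument is combinatorial and uses the identification of $\pi_0(\hoP_k(f))$ with $\rho^k$-twisted conjugacy classes (that is, Reidemeister classes of $f^k$) computed in the proof of Theorem \ref{stalk}, together with the further cyclic identification $y \sim \rho(y)$ imposed by the Borel construction. By the preceding step, the coefficient of $\ell_n^k(f)$ at $[x] \in \pi_{\rho,k}$ is the signed count of exact period-$k$ orbits whose Reidemeister class maps to $[x]$; this is precisely the fixed point index of the associated Nielsen class of $f^k$, and so is nonzero if and only if that Nielsen class is essential. Hence the number of nonvanishing coefficients equals the Nielsen number $N(f^k)$.

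The hard part will be the geometric representation step, which requires an equivariant transversality argument for $\Phi_n(f)$: the cyclic group $\Bbb Z_n$ does not act freely on $M^{\times n}$ (the thin diagonals are fixed), and one must show that a generic equivariant perturbation of $\Phi_n(f)$ makes all periodic orbits nondegenerate while preserving the framing data used to define $\ell_G$. Coupled to this is the delicate matter of matching orientation and framing conventions between the present homotopical construction and the classical Lefschetz--Hopf index formula, so that ``signed count of orbits in a Reidemeister class'' really equals ``fixed point index of a Nielsen class.'' A secondary subtlety is the bookkeeping of reducible Nielsen classes of $f^k$ (those supported on points of period properly dividing $k$), which must be accommodated in the decomposition $\ell_n(f) = \bigoplus_{k \mid n} \ell_n^k(f)$ so that ${\cal N}_k(f)$ tracks $N(f^k)$ rather than some refined essential-exact-period count.
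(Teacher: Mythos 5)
The statement you are attempting is stated in the paper as a \emph{Conjecture}: the authors offer no proof, and they say explicitly elsewhere that they do not know whether $\ell_n(f)$ determines the Nielsen numbers $N(f^k)$. So there is no argument in the paper to compare yours against; what matters is whether your outline closes the gap, and it does not. The step you defer to the end --- producing a geometric fixed-point representative of $\ell_{\Bbb Z_n}(\Phi_n(f))$ by a ``generic equivariant approximation'' and Pontryagin--Thom --- is precisely the equivariant transversality that the entire paper is engineered to avoid. The graph of $\Phi_n(f)$ cannot in general be made equivariantly transverse to the diagonal of $M^{\times n}$ (the $\Bbb Z_n$-action is far from free near the thin diagonals), and the paper itself records that the map from geometric to homotopical equivariant bordism can fail to be an isomorphism for exactly this reason. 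Although in the framed, degree-zero case the Hauschild--Kosniowski theorem does provide geometric representatives of classes in $\Omega_0^{\Bbb Z_n,\text{\rm fr}}(\hoP_n(f))$, that does not identify such a representative with the actual periodic point set of $f$, nor its framings with the local fixed point indices of $f^k$; that identification is the content of the conjecture, not a lemma you may quote.

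Two further points in your combinatorial step would fail as written. First, a basis element of $\Bbb Z[\pi_{\rho,k}]$ is not a Reidemeister class of $f^k$ but an orbit of such classes under the additional relation $y \sim \rho(y)$ imposed by the Borel construction; since $\rho$ permutes the Nielsen classes of $f^k$ (preserving index), the number of nonzero coefficients counts $f$-orbits of essential classes, and equating this with $N(f^k)$, which counts the classes themselves, requires an argument you do not supply. Second, your assertion that the $k$-th tom Dieck summand ``collects contributions from the orbits of exact period $k$'' misreads the splitting: a single equivariant fixed point contributes, via the transfer projections, to the summands of all relevant divisors, which is exactly why the $k$-th summand can plausibly see all fixed points of $f^k$ and not merely those of exact period $k$; you flag this as a ``secondary subtlety'' but it is central. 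Until the index identification and these two bookkeeping issues are settled, you have a reasonable research plan --- essentially the one the authors implicitly propose --- but not a proof.
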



\begin{thebibliography}{B-L-R}
\bibliographystyle{invent}
\bibliography{john}




\bibitem[B]{Bredon}%
Bredon, G.E.: Introduction to compact transformation groups. 
\newblock Pure and Applied Mathematics, Vol. 46. 
Academic Press, 1972.


\bibitem[Br1]{Brown}%
Brown, R.: On a homotopy converse to the Lefschetz fixed point theorem.
\newblock {\rm Pacific J. Math.} {\bf 17}, 407--411 (1966)

\bibitem[Br2]{Brown_book}%
Brown, R.: The Lefschetz fixed point theorem.
\newblock Scott, Foresman, 1971

\bibitem[CW]{CW}%
Costenoble, S.R., Waner, S.: $G$-transversality revisited.
\newblock {\it Proc. Amer. Math. Soc.} {\bf 116}, 535--546 (1992). 


\bibitem[D]{Davis}%
Davis, M.: Smooth $G$-manifolds as collections of fiber bundles.  
\newblock {\it Pacific J. Math.\rm}  {\bf 77}, 315--363 (1978)



\bibitem[DK]{Dwyer-Kan_sing-real}%
Dwyer, W.G., Kan, D.M.: Singular functors and realization functors.  
\newblock {\it Nederl. Akad. Wetensch. Indag. Math.\rm}
 {\bf 46}  147--153 (1984)



\bibitem[FW]{Fadell-Wong}%
Fadell, E., Wong, P.: On deforming $G$-maps to be fixed point free.
\newblock{\it Pacific J. Math.\rm}  {\bf 132}, 277--281 (1988)

\bibitem[F]{Ferrario}%
Ferrario, D.L.: A note on equivariant fixed point theory. 
\newblock Handbook of topological fixed point theory, 287--300, 
Springer, Dordrecht, 2005 



\bibitem[Fu]{Fuller}%
Fuller, F.B.: The existence of periodic points.
\newblock {\it Ann. of Math.} {\bf 57}, 229--230 (1953) 




\bibitem[G]{Good_calc2}%
Goodwillie, T.~G.: {Calculus II: analytic functors}.
\newblock {\it $K$-theory\rm} {\bf 5}, 295--332 (1992)

\bibitem[H]{Haefliger}%
Haefliger, Andr\'e:
Plongements différentiables dans le domaine stable
\newblock {\it Comment. Math. Helv.\rm} {\bf 37}, 155--176 (1962/1963) 


\bibitem[Ha]{Hauschild}%
Hauschild, H.: Bordismentheorie stabil gerahmter $G$-Mannigfaltigkeiten
\newblock {\it Math. Z.} {\bf 139}, 165--171 (1974) 


\bibitem[Hu]{PoHu}%
Hu, P.: Duality for smooth families in equivariant stable homotopy theory.
\newblock {\it Asterisqu\'e\rm } {\bf 285}, 2003

\bibitem[HQ]{HQ}%
Hatcher, A., Quinn, F.: Bordism invariants of intersections of submanifolds.
\newblock {\it Trans. Amer. Math. Soc.\rm} {\bf 200}, 327--344 (1974)


\bibitem[I]{Illman}%
Illman, S\"oren: Smooth equivariant triangulations of $G$-manifolds for $G$ a finite group.
\newblock {\it  Math. Ann.\rm }  {\bf 233} 199-220 (1978) 


\bibitem[J]{Jezierski}%
Jezierski, J.: Weak Wecken's theorem for periodic points in dimension 3.  
\newblock {\it Fund. Math. \rm} {\bf 180}, 223--239 (2003)




\bibitem[Ji1]{Jiang}%
Jiang, B.: Estimation of the number of periodic points.
\newblock {\it Pacific J. of Math.\rm} {\bf 172}, 151--185 (1996)

\bibitem[Ji2]{Jiang_primer}%
Jiang, B.J.: A primer of Nielsen fixed point theory.  
\newblock Handbook of topological fixed point theory,  
617--645, Springer, Dordrecht, 2005






\bibitem[KW]{klein-williams}%
Klein, J.R., Williams, E.B.: Homotopical intersection theory, I.
\newblock {\rm Geom. Topol.\it}  {\bf 11}, 939--977 (2007)

\bibitem[Ko]{Kos}%
Kosniowski, C.: Equivariant stable homotopy and framed bordism.
\newblock {\it Trans. Amer. Math. Soc.} {\bf 219}, 225--234 (1976) 

\bibitem[L]{Larmore}%
Larmore L.L.: Twisted cohomology theories and the single 
obstruction to lifting.
\newblock {\it Pacific Jour. Math.\rm} {\bf 41}, 755--769 (1972)

\bibitem[LMS]{LMS}%
Lewis, L.G., May, J.P., Steinberger, M., McClure, J.E.:
\newblock Equivariant Stable Homotopy Theory. 
\newblock Springer LNM, 1213, 1986

\bibitem[M]{May}%
May, J.P., {\it et.\ al.}: Equivariant Homotopy and Cohomology Theory.
\newblock CBMS Regional Conference Series in Mathematics 91, 1996. 



\bibitem[MS]{May-Sig}%
May, J.P., Sigurdsson, J.: Parametrized homotopy theory.
\newblock Mathematical Surveys and Monographs, 132. 
Amer.\ Math.\ Soc.\, 2006.

\bibitem[P]{Petrie}%
Petrie, T.: $G$-transversality.
\newblock{\it Bull. Amer. Math. Soc.} {\bf 81}, 721--722 (1975)






\bibitem[tD]{tD}%
tom Dieck, T.: {\it Transformation Groups}.
\newblock de Gruyter Studies in Mathematics 8, 1987


\bibitem[W]{Weber}%
Weber, J: Equivariant Nielsen invariants for discrete groups.  
\newblock{\it Pacific J. Math.} {\bf 231}, (2007), 239--256 (2007)


\bibitem[We]{Wecken}%
Wecken, F.: Fixtpunktklassen. I. 
\newblock {\it Math. Ann.\rm}  {\bf 117},  659--671 (1941)



\end{thebibliography}
\end{document}